\newtheorem{theorem}{Theorem}
\newtheorem{lemma}[theorem]{Lemma}
\newtheorem{proposition}[theorem]{Proposition}
\newtheorem{corollary}[theorem]{Corollary}
\newtheorem{claim}[theorem]{Claim}
\theoremstyle{definition}
\newtheorem*{definition*}{Definition}
\newtheorem{definition}[theorem]{Definition}
\newcommand{\theoremname}{testing}
\theoremstyle{remark}
\newtheorem*{remark*}{Remark}
\numberwithin{equation}{section}
\def\eps{{\varepsilon}}
\def\La{{\Lambda}}
\def\la{{\lambda}}
\newcommand{\bR}{\mathbb R}
\newcommand{\bZ}{\mathbb Z}
\newcommand{\bD}{\mathbb D}
\newcommand{\bP}{\mathbb P}
\newcommand{\bS}{\mathbb S}
\newcommand{\bE}{\mathbb E}
\newcommand{\Var}{{\sf Var}}
\begin{document}
	
	\captionsetup[figure]{labelfont={bf},labelformat={default},labelsep=period,name={Fig.}}
	
	\title[Large fluctuations of the Hierarchical Coulomb gas]{Large charge fluctuations in the Hierarchical Coulomb gas}
	
	\author{Alon Nishry}
	\address{\tiny{Alon Nishry, School of Mathematical Sciences, Tel Aviv University}}
	\email{alonish@tauex.tau.ac.il}
	
	\author{Oren Yakir}
	\address{\tiny{Oren Yakir, School of Mathematical Sciences, Tel Aviv University}}
	\email{oren.yakir@gmail.com}
	
	\begin{abstract}
		The two-dimensional one-component plasma (OCP) is a model of electrically charged particles which are embedded in a uniform background of the opposite charge, and interact through a logarithmic potential. More than 30 years ago, Jancovici, Lebowitz and Manificat discovered an asymptotic law for probabilities of large charge fluctuations in the OCP. We prove that this law holds for the \emph{hierarchical} counterpart of the OCP. The hierarchical model was recently introduced  by Chatterjee, and is inspired by Dyson’s hierarchical model of the Ising ferromagnet.
	\end{abstract}
	\maketitle 
	\thispagestyle{empty}
	\section{Introduction}
	The two-dimensional ``one-component plasma'' (sometimes also called the Coulomb gas, or the jellium model) is a system of `electrically charged' particles which interact through a logarithmic potential and are immersed in a uniform background of the opposite charge. Mathematically, this is a point process with $n\ge 2$ points $x_1,\ldots,x_n\in \bR^2$, whose joint probability density function is given by
	\begin{equation}
		\label{eq:density_for_euclidean_coulomb}
		\frac{1}{Z_E(n,\beta)} \exp\Big(-\beta\mathcal{H}_n^E(x_1,\ldots,x_n)\Big)
	\end{equation}
	where
	\begin{equation*}
		\mathcal{H}_n^E(x_1,\ldots,x_n) = \sum_{1\le j<k\le n} \log\frac{1}{|x_j-x_k|} + \frac{1}{2}\sum_{j=1}^{n} |x_j|^2 \, .
	\end{equation*}
	Here, $\beta>0$ is a parameter known as the \emph{inverse temperature}, and the normalizing constant $Z_E(n,\beta)$ is called the \emph{partition function}. The `external field' $\frac{1}{2}|\cdot|^2$ keeps the particles from escaping to infinity.
	
	The one-component plasma has attracted the interest of mathematicians and physicist alike, see for example the recent surveys by Serfaty~\cite{SerfatyICM} and Lewin~\cite{Lewin}. It is known since the classical work of Ginibre~\cite{Ginibre} that the joint density~\eqref{eq:density_for_euclidean_coulomb} with $\beta=2$ is the density of the eigenvalues of a random matrix with i.i.d.\ complex Gaussian entries (which form a determinantal point process, e.g. \cite[Sec.~6.4]{GAFbook}). Generally speaking, the one-component plasma with $\beta \ne 2$ is much less understood than the $\beta=2$ case (e.g.~\cite[Sec.~4]{GinibreSurvey22}). 
	
	\subsection{Charge fluctuations}
	It is well known that with high probability (as $n\to \infty$), the points sampled from the density~\eqref{eq:density_for_euclidean_coulomb} are uniformly distributed in a disk of radius $\sqrt{n/\pi}$ centered at the origin. Denoting by $\bD(0,R)$ the disk of radius $R$ centered at the origin, the \emph{charge fluctuation} is the random variable given by
	\begin{equation*}
		\label{eq:charge_fluc_for_euclidean_coulomb}
		\nu_n(R) = \# \text{ of points in } \bD(0,R) - \pi R^2\, .
	\end{equation*}
	Fluctuations of $\nu_{n}(R)$ have been of interest to physicists, see e.g. in~\cite{MartinYalcin,Lebowitz,JLM}. In these papers, the existence of a limit law as $n\to \infty$ for the point process~\eqref{eq:density_for_euclidean_coulomb} is taken for granted, and the authors study $\nu_\infty(R)$ as $R$ becomes large. The existence of a unique infinite system is known  only in the special case $\beta=2$. 
	
	In~\cite{MartinYalcin}, Martin and Yalcin argued that 
	\begin{equation}
		\label{eq:variance_for_charge_fluctuation}
		\qquad \Var\big[\nu_\infty(R)\big] \sim c_\beta R \qquad \text{as   } R\to \infty \, ,
	\end{equation}
	for some $c_\beta>0$.
	It is worth mentioning that for the Poisson point process (i.e.\ the limit of the $\beta=0$ case), the variance in~\eqref{eq:variance_for_charge_fluctuation} scales like $R^2$, which reflects on the fact that points sampled according to~\eqref{eq:density_for_euclidean_coulomb} with $\beta>0$ tend to have smaller discrepancy than points which do not interact. Later on, Lebowitz~\cite{Lebowitz} argued the fluctuations of $\nu_\infty(R)/\sqrt{R}$ are asymptotically normal as $R\to \infty$.
	These predictions can be verified for the special case $\beta=2$ through exact computations. 
	In a recent preprint, Lebl\'e~\cite{Leble} shows there exists a constant $c>0$ so that for all $R$ large enough
	\[
	\limsup_{n\to \infty} \Var\big[\nu_n(R)\big] \le \frac{R^2}{(\log R)^c}\, .
	\]
	\subsection{The JLM Law}
	While the Martin-Yalcin prediction~\eqref{eq:variance_for_charge_fluctuation} is about typical behavior for the charge fluctuations, one can also ask about rare events. Jancovici, Lebowitz and Manificat~\cite{JLM} discovered that for $\alpha>\tfrac{1}{2}$,
	\begin{equation}
		\label{eq:JLM_prediction}
		\log \bP\Big[|\nu_\infty(R)| \ge R^\alpha\Big] \asymp -R^{\varphi(\alpha) + o(1)}, \quad \text{ as } R \to \infty,
	\end{equation}
	where $\varphi$ is the piece-wise linear function
	\begin{equation}
		\label{eq:def_of_phi_alpha}
		\varphi(\alpha) = \begin{cases}
			2\alpha - 1, & \frac{1}{2}<\alpha \le 1; \\ 3\alpha -2, & 1 \le \alpha \le 2; \\ 2\alpha & \alpha \ge 2.
		\end{cases}
	\end{equation}
	In a usual probabilistic terminology, $\alpha\in(\tfrac{1}{2},1)$ is the `moderate deviations' regime, the case $\alpha=1$ is the classical large deviations, the hole event is a special case of $\alpha=2$ and $\alpha>2$ is the extreme overcrowding regime. The asymptotics~\eqref{eq:JLM_prediction}, which we call the \emph{JLM law}, is proved in~\cite{FenzlLambert, JLM, Shirai} for the case $\beta=2$. Very recently, Thoma~\cite{Thoma} obtained the upper bound in the overcrowding regime $\alpha>2$ for general $\beta>0$. 
	
	\subsection{The hierarchical Coulomb gas}
	In this paper we study a closely related model to the one-component plasma, namely, its hierarchical counterpart as introduced recently by Chatterjee in~\cite{chatterjee}. In this model, $x_1,\ldots,x_n\in [0,1]^2$ are sampled according to the density
	\begin{equation}
		\label{eq:density_of_HCG_intro}
		\frac{1}{Z(n,\beta)} \exp\bigg(-\beta \sum_{1\le i <j\le n}w(x_i,x_j)\bigg)
	\end{equation}
	where $w(x,y)$ is the minimal $k$ such that $x,y$ belong to different dyadic sub-squares of $[0,1]^2$ of side length $2^{-k}$, see Figure~\ref{fig:illustration_of_w}.     
	\begin{figure}[!htbp]
		\begin{center}
			\qquad \qquad \scalebox{0.3}{\includegraphics{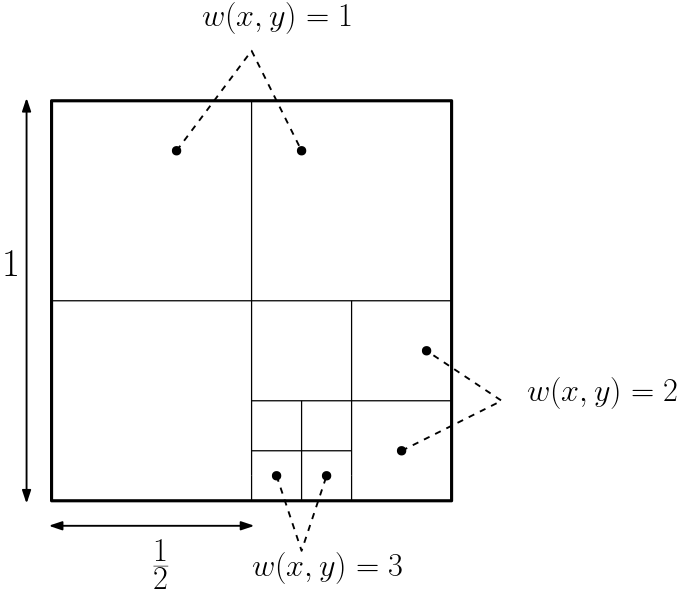}}
		\end{center}
		\caption{Illustration of the hierarchical interacting potential $w(x,y)$.}
		\label{fig:illustration_of_w}
	\end{figure}
	There might be some ambiguity about points which lie on boundaries of dyadic squares, but since these point form a set of measure zero, they do not matter. It is evident that there exists an absolute constant $C>0$ so that
	\[
	w(x,y) \le C \log\big(|x-y|^{-1}\big), \qquad \text{for } |x-y| \le 1.
	\]
	On the other hand, there is another absolute constant $c>0$ so that the average of $w(x,y)$ over all points $|x-y|=\delta$ is bounded from below by
	$
	c \log\big(\delta^{-1}\big),
	$
	for all $\delta\in(0,1)$.  
	Changing the Euclidean distance to a hierarchical distance as above is an old idea of Dyson~\cite{Dyson}, which implemented it to study a hierarchical version of the Ising model with long-range interactions.
	
	Throughout, we denote by $\mu_n$ the random counting measure associated with the joint density~\eqref{eq:density_of_HCG_intro}, omitting the dependence on the inverse temperature $\beta>0$ which will be fixed throughout. We denote by ${\tt Leb}$ the Lebesgue measure on $[0,1]^2$. For any Borel subset $U\subset [0,1]^2$, simple symmetry consideration shows that
	\begin{equation}
		\label{eq:introdution_expectation_of_mu_n}
		\bE[\mu_n(U)] = n \, {\tt Leb} (U) \, .
	\end{equation}
	As the typical point separation of the gas is of order $n^{-1/2}$, it is natural to study the ``blown-up'' version. That is, for a fixed $z\in (0,1)^2$, we consider the disk
	\begin{equation*}
		\bD_n(z,R) = \bD_n(R) = \bD\Big(z,\frac{R}{\sqrt{n}}\Big) = \Big\{x\in \bR^2 : \, |x-z|\le \frac{R}{\sqrt{n}}\Big\}\, .
	\end{equation*}
	Clearly, for all $z,R$ fixed and $n$ large enough, we have $\bD_n(z,R)\subset [0,1]^2$. In~\cite{chatterjee}, Chatterjee proved (among other things) that for all $\beta>0$ we have
	\begin{equation*}
		c_\beta R \le \Var\big[\mu_n(\bD_n(z,R))\big] \le C_\beta R \, (\log R)^2
	\end{equation*}
	uniformly as $n\to \infty$, thereby confirming the Martin-Yalcin prediction~\eqref{eq:variance_for_charge_fluctuation} for the hierarchical model (up to logarithmic factors in $R$). The main result of this paper is a verification of the JLM law~\eqref{eq:JLM_prediction} for the hierarchical Coulomb gas.
	
	\begin{theorem}
		\label{thm:Micro_JLM}
		The Jancovici-Lebowitz-Manificat law holds for the hierarchical Coulomb gas model in two-dimensions, at all scales. More precisely, for all $z\in(0,1)^2$, $\alpha>\tfrac{1}{2}$ and $\eps>0$, there exists $R_0=R_0(\alpha,\beta,z,\eps)$ large enough so that for all $R>R_0$ we have,
		\[
		\exp\Big(-R^{\varphi(\alpha)+\eps}\Big) \le \bP\Big[\big|\mu_n(\bD_n(z,R)) - \pi R^2\big| \ge R^\alpha \Big] \le \exp\Big(-R^{\varphi(\alpha)-\eps}\Big)
		\]
		uniformly as $n\to\infty$. Here $\varphi$ is the piecewise linear function defined by~\eqref{eq:def_of_phi_alpha}.
	\end{theorem}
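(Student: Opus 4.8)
The plan is to exploit the self-similar structure of the hierarchical interaction to reduce the problem at scale $R/\sqrt{n}$ to a problem about a single dyadic square, and then prove matching upper and lower bounds separately in each of the three regimes $\tfrac12<\alpha\le 1$, $1\le\alpha\le 2$, $\alpha\ge 2$. The key structural fact is that, conditionally on the number of points falling in a given dyadic sub-square $Q$ of side $2^{-k}$, the configuration inside $Q$ is again a hierarchical Coulomb gas on $Q$ with the same $\beta$, while the ``coarse'' data (how many points land in each dyadic square at each level) is governed by a hierarchy of independent-ish partition-function ratios. Writing $\bD_n(R)$ as (almost) a union of dyadic squares of side comparable to $R/\sqrt n$, one sees that the fluctuation $\mu_n(\bD_n(R))-\pi R^2$ is controlled by the imbalance of mass at dyadic level $k\approx \tfrac12\log_2(n/R^2)$, i.e.\ by how many of the $\sim R^2$ ``expected'' points are pushed in or out across the boundary.

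\textbf{Upper bound.} For the upper bound I would estimate the probability of a deviation by a change-of-measure / energy argument: conditioning on an atypical count $\mu_n(\bD_n(R))=\pi R^2+t$ with $t=\pm R^\alpha$, compare the constrained partition function to the free one. The dominant cost is the logarithmic self-energy of the excess (or deficit) charge $t$ confined to (or expelled from) a region of hierarchical radius $R/\sqrt n$. A clean way to organize this: let $M_j$ be the number of points in the relevant dyadic ancestor square at level $j$, so $M_j$ performs a ``multiplicative random walk'' as $j$ decreases, and the interaction energy is $\beta\sum_j \binom{M_j}{2}$ minus background terms. A deviation of size $t$ at the target level forces, by the near-determinism of the gas at coarser levels, either (i) a local build-up that costs energy of order $t^2$ (giving the $R^{2\alpha}$ rate in the overcrowding regime $\alpha>2$), or (ii) a redistribution across $O(\log R)$ dyadic scales, where balancing the per-scale entropy against the per-scale energy cost yields the exponent $2\alpha-1$ for $\alpha\le 1$ and $3\alpha-2$ for $1\le\alpha\le2$. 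I expect the moderate-deviations exponent $2\alpha-1$ to come from an $L^2$/variance input (Chatterjee's bound $\Var[\mu_n(\bD_n(R))]\le C_\beta R(\log R)^2$ already gives $\exp(-cR^{2\alpha-1}/(\log R)^2)$ via a sub-Gaussian concentration estimate, which suffices up to the $\eps$ loss), while the $\alpha\ge 1$ regimes require the genuine energy lower bound on the constrained gas.

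\textbf{Lower bound.} For the lower bound I would construct an explicit favorable event and estimate its probability from below by restricting the partition function integral to a product set. To force $\mu_n(\bD_n(R))\ge \pi R^2+R^\alpha$: in the overcrowding regime, pin $\lceil R^\alpha\rceil$ extra points into a single deep dyadic square inside $\bD_n(R)$ and let the rest relax; the probability cost is $\exp(-\beta\,c\,R^{2\alpha})$ from the pairwise interaction of the crowded points, matching $\varphi(\alpha)=2\alpha$. In the regimes $\alpha\le 2$, instead spread the excess $R^\alpha$ charge over an annulus of $O(\log R)$ dyadic scales so that the incremental energy at each scale is balanced; summing the scale-by-scale costs reproduces $R^{2\alpha-1}$ and $R^{3\alpha-2}$. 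The probability of such a configuration is bounded below by the ratio of a constrained partition function to $Z(n,\beta)$, which I would control using the hierarchical factorization together with crude but uniform-in-$n$ bounds on one-level partition-function ratios.

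\textbf{Main obstacle.} The crux is the upper bound in the large-deviations and intermediate regime $1\le\alpha\le 2$: one must show that \emph{no} redistribution strategy beats the claimed rate $R^{3\alpha-2}$, i.e.\ a genuine lower bound on the energy of any configuration consistent with the constraint, uniform in $n$. This is where the logarithmic (rather than, say, power-law) nature of the potential and the precise geometry of the dyadic hierarchy enter, and where one cannot get away with second-moment information alone. Concretely, I anticipate needing a deterministic ``isoperimetric'' inequality for the hierarchical energy — that moving mass $t$ across the boundary of a hierarchical disk of radius $r$ costs at least $c\,t\log(t)$-type or $c\,t^{2}/(\text{scale})$-type energy depending on how $t$ compares with $r^2$ — and then optimizing this over all ways of splitting $t$ among scales. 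Establishing this inequality with constants that do not degenerate as $n\to\infty$, and then matching the optimization exactly against $\varphi(\alpha)$ (absorbing all polylogarithmic slack into the $\eps$), is the technical heart of the argument.
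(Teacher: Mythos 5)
There are two concrete gaps, both in the moderate-deviations regime $\tfrac12<\alpha<1$. First, for the upper bound you assert that Chatterjee's variance bound $\Var[\mu_n(\bD_n(R))]\le C_\beta R(\log R)^2$ ``already gives $\exp(-cR^{2\alpha-1}/(\log R)^2)$ via a sub-Gaussian concentration estimate.'' This is a non-sequitur: a second-moment bound does not imply any exponential tail. Obtaining a genuine sub-Gaussian moment generating function for $\Delta_n(\bD_n(R))$ is in fact the technical heart of the paper in this regime, and it requires (i) a sharp sub-Gaussian estimate for the top-level split $(X_1,\dots,X_4)$ via partition-function ratios (Lemma~\ref{lemma:tail_probability_for_disc_at_top_level}, hence Lemma~\ref{lemma:upper_bound_on_MGF_top_level_small_input}), (ii) iterating that MGF bound down the dyadic tree through the conditional self-similarity of Proposition~\ref{prop:self_similarity_and_conditional_independence} and the exposure martingale $M_k=\bE[\Delta_n(\bD_n(R))\mid\mathcal F_k]$ (Lemma~\ref{lemma:upper_bound_moderate_deviations_martingale_term}), and (iii) a separate Hoeffding-type bound for the micro-scale boundary term $B_{j_2}$, conditioned on the event that $\sum_{Q\in\mathcal V_{j_2}}\mu_n(Q)^2$ is not too large (Lemma~\ref{lemma:cramer_estimate_on_the_boundary}). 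None of this follows from the variance estimate.

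Second, your lower-bound strategy of ``restricting the partition-function integral to a product set'' cannot yield the exponent $2\alpha-1$ when $\tfrac12<\alpha<1$. In that range the deviation $R^\alpha$ is much smaller than the $\Theta(R)$ boundary squares at the microscopic scale $2^{-j_2}\approx n^{-1/2}$, each holding $O(1)$ points, so the dominant contribution to the probability is combinatorial: one must account for the many ways of distributing the excess across boundary squares, i.e.\ the entropy. Pinning a single explicit configuration (which is exactly what works for $\alpha>1$, and is what the paper does in Section~\ref{sec:LB_large_deviations}) would produce an energy cost of order $R^\alpha$, not $R^{2\alpha-1}$. The paper instead reduces, via conditional independence over $\mathcal F_{j_2}$, to a moderate-deviations \emph{lower} bound for a sum of $\Theta(R)$ independent, uniformly bounded, non-degenerate random variables, and proves that by an exponential tilting (Proposition~\ref{prop:moderate_deviations_lower_bound_bounded_random_variables}). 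This step is missing from your proposal, and without it the moderate-deviations lower bound does not close. (Your remaining plans — pinning overcrowded squares for $\alpha>2$, and building a boundary-annulus event for $1<\alpha<2$ — are broadly aligned with the paper's strategy, though the paper uses a single dyadic scale $2^{-j_3}\approx R^{\alpha-1+\eta}/\sqrt n$ rather than a cascade of $O(\log R)$ scales, and replaces your hoped-for ``deterministic isoperimetric inequality'' in the upper bound by iterated MGF estimates with the Markov parameter tuned to $\lambda\approx R^{2\alpha-2}$.)
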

	\begin{remark*}
		Although Theorem~\ref{thm:Micro_JLM} is stated for large microscopic disks, the proof does not rely on the specific geometry of the domain in any way. A similar result for more general domains with regular boundary can be obtained via standard geometric arguments (see, for example, \cite[Lemma~3.12]{chatterjee}). For clarity and consistency, we chose to assume the domain is a disk, as assumed in~\cite{JLM}, and do not discuss this extension in further detail.
	\end{remark*}
	\subsection{Related works}
	One could study local properties of the one-component plasma via \emph{smooth} test functions instead of counting points. In recent years a better understanding of the typical fluctuations was obtained, see Bauerschmidt-Bourgade-Nikula-Yau~\cite{BauerschmidtBourgadeNikulaYau,BauerschmidtBourgadeNikulaYauQuasiFree} and Lebl\'e-Serfaty~\cite{LebleSerfatyGAFA}. Another related direction of research focuses on obtaining tail-estimates which matches the JLM prediction on a macroscopic scale (that is, when $R$ grows like $c\sqrt{n}$ for some $c>0$). This was considered for the hole probability ($\alpha=2$) in~\cite{Adhikari,Charlier} and for all $\alpha \in (4/3,2)$ in~\cite{GhoshNishry}. We also mention some related results regarding almost sure uniform discrepancy bounds of the charge fluctuations when $\beta=\beta_n$ increases with the number of points. This phenomena, known as the  ``freezing regime", is discussed in~\cite{Ameur-Romero,Marceca-Romero}.

	The hierarchical Coulomb gas model~\eqref{eq:density_of_HCG_intro} was introduced by Chatterjee in~\cite{chatterjee}, which further defined an analogous hierarchical model in all dimensions. In~\cite{chatterjee}, Chatterjee proved reduced charge fluctuations for $d\in\{1,2,3\}$ and gave upper and lower bounds for fluctuations of smooth test functions, at all scales. Later on, Ganguly and Sarkar~\cite{GangulySarkar} studied the hierarchical model for $d\ge 3$, and established the correct fluctuations for smooth test functions (up to logarithmic factors) in all dimensions bigger than three. We mention that an important intermediate step in~\cite[Eq. 3.11]{GangulySarkar} is proving that for $d\ge3$ there is a strong concentration of the partition function around the contribution coming from the ground states (i.e. those configurations that minimize the potential energy). Such strong concentration does not hold in the cases $d\in\{1,2\}$ due to the different nature of the interacting potential in these cases. Consequently, the analogue of the JLM law for $d\ge 3$ does not apply to this hierarchical model. The work~\cite{JLM} deals with a different temperature regime than the one in~\cite{chatterjee,GangulySarkar}, and we will not address the extension of the JLM law for $d\ge 3$ in this paper. 
		
	Interestingly, although originally conjectured for the one-component plasma, the JLM law~\eqref{eq:JLM_prediction} holds for other two-dimensional point processes with reduced fluctuations. For instance, in~\cite{Nazarov-Sodin-Volberg} Nazarov, Sodin and Volberg proved the JLM law holds for the zero set of the \emph{Gaussian Entire Function} (GEF), a random entire function which is also a Gaussian process. We mention that partial results towards the JLM law for GEF zeros were already obtained by Sodin-Tsirelson~\cite{SodinTsirelson3} and Krishnapur~\cite{Krishnapur}. Another example is the random set obtained by perturbing the points of the integer lattice $\bZ^2$ with i.i.d.\ Gaussian random vectors. Checking that the JLM law holds for the latter is a routine computation. 
	
	Ever since the classical work of Dyson~\cite{Dyson}, the study of hierarchical counterparts for models in statistical mechanics (as done in this paper) has become quite popular. This is largely due to the ``tree structure" inherent in hierarchical models, which often makes the analysis more tractable. For instance, in Dyson's hierarchical model, the critical exponents were computed exactly by Bleher and Sinai~\cite{BleherSinai,BleherSinai2}, confirming some of the predictions from the physics literature. 
	Other notable examples are the hierarchical $\phi^4$-model studied by Gawedzki-Kupiainen~\cite{GawedzkiKupiainen1,GawedzkiKupiainen}, hierarchical directed polymers studied by Derrida-Griffiths~\cite{DerridaGriffiths}, hierarchical Anderson model~\cite{Bovier,Kritchevski,FyodorovOssipovRodriguez}; by no means this is a complete list.
	While we do not delve into these examples here, we refer the interested reader to the original papers and the books by Collet-Eckmann~\cite{ColletEckmann}, Sinai~\cite{Sinai} and Bauerschmidt-Brydges-Slade~\cite{BauerschmidtBrydgesSlade} for a more comprehensive study of hierarchical models.
	
	\subsection{Structure of the proof}\label{sec:ideas_from_the_proof}
	The proof of Theorem~\ref{thm:Micro_JLM} splits into the different regimes $\alpha\in(\tfrac{1}{2},1)$, $\alpha\in(1,2)$ and $\alpha>2$, where in each regime we need to prove the corresponding upper and lower bounds. The paper is broken up accordingly:
	\begin{itemize}
		\item In Section~\ref{sec:preliminaries} we collect some preliminaries that are used throughout the paper;
		\item In Section~\ref{sec:UB_large_deviations} we prove the upper bound in the range $1<\alpha<2$;
		\item In Section~\ref{sec:UB_moderate_deviations} we prove the upper bound in the range $\frac{1}{2}<\alpha<1$;
		\item In Section~\ref{sec:LB_large_deviations} we prove the lower bound in the range $1<\alpha<2$;
		\item In Section~\ref{sec:LB_moderate_deviations} we prove the lower bound in the range $\frac{1}{2}<\alpha<1$;
		\item Finally, in Section~\ref{sec:overcrowding}, we conclude the proof of Theorem~\ref{thm:Micro_JLM} by proving both the upper and lower bounds in the overcrowding regime $\alpha>2$.
	\end{itemize}
	Denote by $\mathcal{D}_k$ the collection of all dyadic sub-squares of $[0,1]^2$ and recall that $\mu_n$ is the random counting measure for the point sampled according to~\eqref{eq:density_of_HCG_intro}. Towards the proof of Theorem~\ref{thm:Micro_JLM}, a key observation is that point counts on dyadic squares are concentrated. First, we will show (by combining Lemma~\ref{lemma:tail_probability_for_disc_at_top_level} with Lemma~\ref{lemma:overcrowding_probability_in_dyadic_cube}) that for any square at the top level $Q\in \mathcal{D}_1$ and $\eps>0$ we have
	\begin{equation}
		\label{eq:introduction_concentration_top_cube}
		\bP\Big[ \big|\mu_n(Q) - \tfrac{n}{4}\big| \ge \eps n\Big] \asymp \exp\Big( - c(\beta,\eps) \, n^2 \Big)
	\end{equation}
	By a certain self-similarity property of the model (see Proposition~\ref{prop:self_similarity_and_conditional_independence} for the exact formulation), we can use induction and ``propagate down'' the strong concentration~\eqref{eq:introduction_concentration_top_cube} to other dyadic squares in $\mathcal{D}_k$, provided that $k$ is not too large. More formally, we will show (with the aid of Lemma~\ref{lemma:concentration_on_dyadic_cubes}) that if $L=n/4^k$ and $L\gg 1$ is large enough, then for all $Q\in \mathcal{D}_k$ and $\eps>0$
	\begin{equation}
		\label{eq:introduction_concentration_general_cube}
		\bP\Big[ \big|\mu_n(Q) - L\big| \ge \eps L\Big] \asymp \exp\Big( - c(\beta,\eps) \, L^2 \Big)\, .
	\end{equation}
	While the extreme overcrowding regime $\alpha>2$ follows by a different (and easier) argument, we conclude the introduction by indicating how~\eqref{eq:introduction_concentration_general_cube} is used in proving the JLM law for $\tfrac{1}{2}<\alpha<2$. We remark that the structure of our proof in this regime is somewhat similar to the structure of proof of the JLM law for the zero set of the GEF as in~\cite{Nazarov-Sodin-Volberg}. In particular, the strong concentration estimate~\eqref{eq:introduction_concentration_general_cube} is analogous to a similar strong concentration result proved for the change of argument of the Gaussian Entire Function along nice curves, see Lemmas~9 and~10 in~\cite{Nazarov-Sodin-Volberg}. 
	
	Following the physical intuition from Jancovici-Lebowitz-Manificat~\cite{JLM}, it is reasonable to assume that the large fluctuations are present in a thin annulus around the disk $\bD_{n}(R)$, where the width of the annuli depends on the size of the charge fluctuations. Starting with the range $\alpha\in(1,2)$, we recall that $\bD_n(R)$ is a disk of radius $R/\sqrt{n}$ and cover $\partial \bD_n(R)$ by dyadic squares in $\mathcal{D}_j$, with $j\ge 1$ chosen so that $$2^{-j} \approx R^{\alpha-1}/\sqrt{n}\, ,$$ see Figure~\ref{fig:jlm_intuition}. We note that the number of \emph{boundary squares} (i.e. those squares in $\mathcal{D}_j$ which intersect $\partial \bD_n(R)$) is $\approx R^{2-\alpha}$. Next, we observe that by~\eqref{eq:introdution_expectation_of_mu_n} for each boundary square $Q\in \mathcal{D}_j$ we have
	\[
	\bE\big[\mu_n(Q)\big] = \frac{n}{4^j} \approx R^{2\alpha-2}.
	\]
	
	\begin{figure}
		\begin{center}
			\scalebox{0.25}{\includegraphics{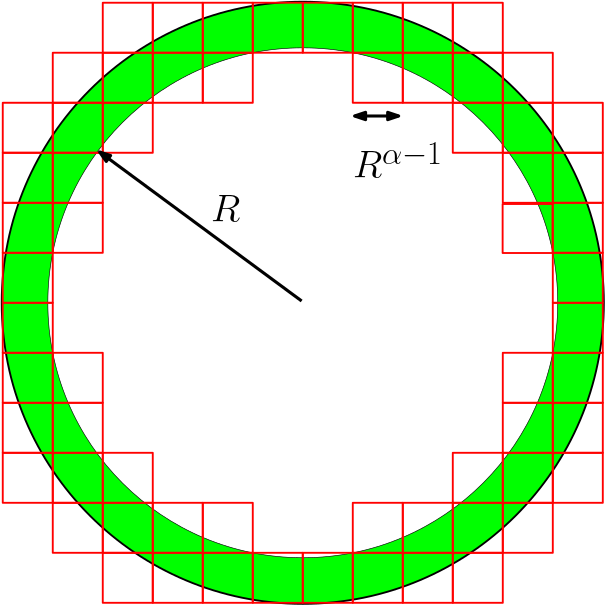}}
		\end{center}
		\caption{The JLM intuition for $1<\alpha<2$: the discrepancy will likely occur in a small annuli (in green) of width $R^{\alpha-1}$ around the boundary of the disk. For $\tfrac{1}{2}<\alpha<1$ the width of the annuli is $\mathcal{O}(1)$.}
		\label{fig:jlm_intuition}
	\end{figure}
	
	That is, the expected number of points in all boundary squares is $\approx R^{2\alpha-2} R^{2-\alpha} = R^{\alpha},$ which is precisely the amount of discrepancy we want to create in the disk $\bD_n(R)$. Assuming that the discrepancy is equally spread among the boundary squares, we conclude from~\eqref{eq:introduction_concentration_general_cube} the probability of having a large discrepancy from each boundary square is roughly (we neglect the constants in front of exponent)
	\[
	\exp\Big(-(R^{2\alpha-2})^2\Big) = \exp\Big(-R^{4\alpha-4}\Big) \, .
	\]
	Combining all the above, while assuming that the point count in the different boundary squares are asymptotically independent, we conclude that
	\begin{align*}
		\bP\Big[\big|\mu_n(\bD_n(R)) - \pi R^2\big| \ge R^\alpha \Big] 
		& \approx \prod_{Q \text{ boundary square }} \bP\Big[ \text{there is large discrepancy in } Q \Big] \\
		& \approx \exp\Big(-R^{4\alpha-4} R^{2-\alpha} \Big) = \exp\Big(-R^{3\alpha-2}\Big)\, .
	\end{align*}
	In practice, the upper bound in the range $\alpha\in(1,2)$ will follow by estimating from above the moment generating function of $\mu_n\big(\bD_n(R)\big)$ is a certain range, while keeping the above intuition in mind, see Section~\ref{sec:UB_large_deviations}. For the lower bound in the range $\alpha\in(1,2)$, we will estimate from below the probability that a certain subset of well-separated boundary squares created the desired discrepancy, while all other boundary squares had a ``typical'' point count, see Section~\ref{sec:LB_large_deviations} for more details.

	For the `moderate deviations' range $\alpha\in(\tfrac{1}{2},1)$, the basic intuition is similar to the range $\alpha\in(1,2)$, only that here we choose $2^{-j}\approx 1/\sqrt{n}$ so there are $\approx R$ boundary squares and the expected number of points per square is of constant order. Therefore, there are typically $\mathcal{O}(R)$ points in all boundary squares, and to create the $R^{\alpha}$ discrepancy in the disk $\bD_n(R)$, one needs to ``choose'' a subset of boundary squares in which there will be a large discrepancy. This extra combinatorial term is the reason we see the different exponent $\varphi(\alpha) = 2\alpha-1$ in this range (in physical language, this corresponds to the \emph{entropy} in the system).
	
	While getting the upper bound in the range $\alpha\in(\tfrac{1}{2},1)$ requires an estimate of the moment generating function of $\mu_n\big(\bD_n(R)\big)$ (see Section~\ref{sec:UB_moderate_deviations}) and is in some sense similar to the upper bound in the range $\alpha\in(1,2)$, the lower bound requires a completely different argument compared to the lower bound in the range $\alpha\in(1,2)$. Indeed, in Section~\ref{sec:LB_moderate_deviations} we reduce the problem of proving the lower bound in the range $\alpha\in(\tfrac{1}{2},1)$ to a question about moderate deviations of independent, bounded, non-degenerate random variables. To tackle the latter, we appeal to a `change of measure' argument, as given by Proposition~\ref{prop:moderate_deviations_lower_bound_bounded_random_variables} (the proof of which in given in Appendix~\ref{sec:moderate_deviations_lower_bound}). 
	
	\subsubsection*{Policy on constants}
	We will use the standard Landau $\mathcal{O}$-notation and the symbol $\lesssim$ interchangeably. Throughout, $C,c>0$ will symbolize (large and small) constants which can depend only on $\beta>0$, but not on $n$ or $R$. Sometimes, to emphasis the dependence in $\beta,$ we will denote such constants by $C_\beta,c_\beta>0$. The value of all constants can change from line to line.
	\subsubsection*{Acknowledgments}
	We would like to thank Sourav Chatterjee, Gaultier Lambert, Thomas Lebl\'e and Mikhail Sodin for very helpful discussions. The research was funded in part by ISF Grant 1903/18. O.Y. is also supported in part by ISF Grant 1288/21 and BSF Grant 202019.
	\section{Preliminaries}
	\label{sec:preliminaries}
	In this section we collect some basic facts on the hierarchical Coulomb gas that will be used throughout the paper. First, we introduce some notations that will be used across sections of this paper. 
	\begin{itemize}
		\item $\bP,\bE,\Var$; the probability, expectation and variance with respect to~\eqref{eq:density_of_HCG_intro};	
		\item $\mu_n$ the random empirical measure of the hierarchical Coulomb gas, as given by~\eqref{eq:def_random_counting_measure_hcg};
		\item $\Delta_n = \mu_n - n \, {\tt Leb}$, where ${\tt Leb}$ is the Lebesgue measure in $[0,1]^2$;
		\item $\bD_n(R) = \bD_n(z,R)$ the disk of radius $R/\sqrt{n}$, centered around a point $z\in (0,1)^2$;
		\item $\mathcal{D}_{k}$ all dyadic sub-squares of $[0,1]^2$ with side lengths $2^{-k}$;
		\item $\mathcal{F}_k$ the sigma-algebra generated by $\big\{ \mu_n(Q) : Q\in \mathcal{D}_k \big\}$;
		\item $\mathcal{U}_k,\mathcal{V}_k$ the maximal and boundary squares (respectively) of order $k$ for $\bD_n(R)$, see Definition~\ref{definition:maximal_and_boundary_cubes};
		\item $p(Q)$ the relative area of $\bD_n(R)$ inside a dyadic square $Q$, that is 
		\[
		p(Q) = \frac{{\tt Leb}\big(\bD_n(R) \cap Q\big)}{{\tt Leb}(Q)} \, .
		\]
		
		\item $\ell\ge 1$ is an integer such that 
		\[
		\frac{R}{\sqrt{n}} \le 2^{-\ell} < 2\, \frac{R}{\sqrt{n}} \, .
		\]
		We will also use in specific locations several different dyadic scales, which we will denote by $j_1,j_2,\ldots$ etc., depending on the order at which they appear. 
	\end{itemize}  
	\subsection{Basic facts from\texorpdfstring{~\cite{chatterjee}}{Cha19}}
	For $n\ge 2$, let $\Sigma_n$ denote the set of all tuples of $n$ points from $[0,1]^2$. Recall that the energy of a configuration $(x_1,\ldots,x_n)\in\Sigma_n$ is given by
	\begin{equation}
		\label{eq:def_of_hamiltonian}
		\mathcal{H}_n(x_1,\ldots,x_n) = \sum_{1\le i <j\le n} w(x_i,x_j) \, ,
	\end{equation}
	where $w(x,y) = \min \{ k \ge 1 : x,y \ \text{belong to distinct sub-squares in } \mathcal{D}_k\}$. For $\beta>0$, let $\nu_{n,\beta}$ denote the probability measure on $\Sigma_n$ with the density
	\[
	\frac{1}{Z(n,\beta)} e^{-\beta \mathcal{H}_n(x_1,\ldots,x_n)}
	\] 
	with respect to Lebesgue measure on $\Sigma_n$. Throughout the paper, we will refer to the point process $\nu_{n,\beta}$ as the \emph{hierarchical Coulomb gas} with $n$ points and inverse temperature $\beta>0$. Let $(p_1,\ldots,p_n)\in \Sigma_n$ be a realization of the point process $\nu_{n,\beta}$, and denote by
	\[
	\mathcal{P}(Q) = \{p_j : \, p_j \in Q\}
	\]
	the random points which fall inside $Q\in \bigcup_{k\ge 1}\mathcal{D}_k$. We will denote the random counting measure associated with $(p_1,\ldots,p_n)$ by
	\begin{equation}
		\label{eq:def_random_counting_measure_hcg}
		\mu_n = \sum_{j=1}^{n} \delta_{p_j} \, .
	\end{equation}
	Clearly $\mu_n(Q) = |\mathcal{P}(Q)|$, and by exchangeability of the point count in dyadic squares we have $\bE\big[\mu_n(Q)\big] = n/4^k$ for all $Q\in \mathcal{D}_k$. Using the linearity of the expectation, the last observation implies that for any Borel set $U\subset[0,1]^2$, we have
	\begin{equation*}
		\label{eq:formula_for_expectation}
		\bE\big[\mu_n(U)\big] = n \, {\tt Leb}(U) \, .
	\end{equation*} 
	The following proposition is the main reason as to why the hierarchical structure helps with the analysis. For the reader's convenience, we provide the proof below.
	\begin{proposition}
		[{\cite[Lemma~3.6]{chatterjee}}]	\label{prop:self_similarity_and_conditional_independence}
		For each $k\ge 1$, conditional on $\mathcal{F}_k$ the random sets $\{ \mathcal{P}(Q) : \, Q\in \mathcal{D}_k\}$ are mutually independent. Furthermore, conditional on $\mathcal{F}_k$, for each $Q\in \mathcal{D}_k$ the random set $\mathcal{P}(Q)$ has the same law as the scaled version of $\nu_{\mu_n(Q),\beta}$.
	\end{proposition}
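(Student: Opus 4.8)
The plan is to decompose the Hamiltonian $\mathcal{H}_n$ along the level-$k$ dyadic partition and to notice that the part coming from pairs in \emph{distinct} squares of $\mathcal{D}_k$ is a function of the counts alone, while the part coming from pairs in the \emph{same} square factorizes into rescaled copies of the model. Concretely, fix $k\ge 1$ and a configuration $(x_1,\dots,x_n)\in\Sigma_n$, let $Q(x_j)\in\mathcal{D}_k$ denote the square of side $2^{-k}$ containing $x_j$, and write
\[
\mathcal{H}_n(x_1,\dots,x_n)=\sum_{\substack{i<j\\ Q(x_i)\ne Q(x_j)}}w(x_i,x_j)\;+\;\sum_{Q\in\mathcal{D}_k}\ \sum_{\substack{i<j\\ x_i,x_j\in Q}}w(x_i,x_j).
\]
If $x\in Q$ and $y\in Q'$ with $Q\ne Q'$ in $\mathcal{D}_k$, then $w(x,y)\le k$ and its value depends only on the dyadic addresses of $Q$ and $Q'$; call it $w(Q,Q')$. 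Hence the first sum equals $\sum_{\{Q,Q'\}}w(Q,Q')\,\mu_n(Q)\mu_n(Q')$, a function of $\{\mu_n(Q):Q\in\mathcal{D}_k\}$, hence $\mathcal{F}_k$-measurable. For the second sum, let $\phi_Q\colon Q\to[0,1]^2$ be the affine bijection given by dilation by $2^k$ followed by a translation; since the dyadic squares of side $2^{-(k+l)}$ inside $Q$ map to those of side $2^{-l}$ in $[0,1]^2$, one has $w(x,y)=k+w(\phi_Q(x),\phi_Q(y))$ for $x,y\in Q$, so
\[
\sum_{\substack{i<j\\ x_i,x_j\in Q}}w(x_i,x_j)=k\binom{\mu_n(Q)}{2}+\mathcal{H}_{\mu_n(Q)}\big(\phi_Q(\mathcal{P}(Q))\big).
\]
Combining the two displays, there is an $\mathcal{F}_k$-measurable function $G_k$ of the counts so that
\[
\mathcal{H}_n(x_1,\dots,x_n)=G_k\big(\mu_n(Q):Q\in\mathcal{D}_k\big)+\sum_{Q\in\mathcal{D}_k}\mathcal{H}_{\mu_n(Q)}\big(\phi_Q(\mathcal{P}(Q))\big).
\]

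Next I would feed this decomposition into the joint density~\eqref{eq:density_of_HCG_intro}. On the event $\{\mu_n(Q)=n_Q,\ Q\in\mathcal{D}_k\}$ the factor $e^{-\beta G_k}$ is constant, so there the law of $(x_1,\dots,x_n)$ has density proportional to $\prod_{Q\in\mathcal{D}_k}\exp\!\big(-\beta\,\mathcal{H}_{n_Q}(\phi_Q(\mathcal{P}(Q)))\big)$. Crucially, the $Q$-th factor depends only on the points lying in $Q$, and after the change of variables $x\mapsto\phi_{Q(x)}(x)$ the configuration space splits as a product $\prod_{Q}([0,1]^2)^{n_Q}$. Therefore the conditional law given $\mathcal{F}_k$ factorizes over $Q\in\mathcal{D}_k$: the families $\{\mathcal{P}(Q):Q\in\mathcal{D}_k\}$ are conditionally independent, and for each $Q$ the normalized conditional density of $\phi_Q(\mathcal{P}(Q))$ is $Z(n_Q,\beta)^{-1}e^{-\beta\mathcal{H}_{n_Q}}$ — that is, $\phi_Q(\mathcal{P}(Q))$ has the law $\nu_{n_Q,\beta}$ with $n_Q=\mu_n(Q)$, which is precisely the asserted self-similarity. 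The normalizing constants match automatically because each factor integrates to $Z(n_Q,\beta)$ by definition of the model at level $n_Q$.

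The step I expect to require the most care is the bookkeeping with labels: $\mathcal{F}_k$ records only the unordered counts $\{\mu_n(Q)\}$, whereas $(x_1,\dots,x_n)$ is an ordered tuple, so the event $\{\mu_n(Q)=n_Q\ \forall Q\}$ is a disjoint union over all assignments of the $n$ labels to the squares. Here one invokes the exchangeability of $\mathcal{H}_n$, hence of the density: both the weight $e^{-\beta G_k}$ and the integral $\int\prod_Q e^{-\beta\mathcal{H}_{n_Q}}=\prod_Q Z(n_Q,\beta)$ are the same for every such assignment, so conditionally on the counts the label assignment is uniform and independent of the positions within the squares. Since the statement concerns only the random sets $\mathcal{P}(Q)$, this uniform assignment is immaterial, and (as usual) the measure-zero set of configurations having a point on a dyadic boundary is discarded. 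Assembling these observations yields the proposition.
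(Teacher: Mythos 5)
Your proof is correct and amounts to essentially the same argument as the paper's, with different bookkeeping. The paper invokes the identity $\mathcal{H}_n=\binom{n}{2}+\sum_{j\ge1}\sum_{Q\in\mathcal{D}_j}\binom{\mu_n(Q)}{2}$ (equation~\eqref{eq:alternative_form_for_the_hamiltonian}) and simply truncates the double sum at level $k$, so the split into an $\mathcal{F}_k$-measurable term plus the level-$k$ sub-Hamiltonians is immediate and the rescaling stays implicit in the recognition of the bracketed sum as a copy of~\eqref{eq:alternative_form_for_the_hamiltonian}. You derive the same decomposition from scratch, separating cross-square and within-square pairs and making the rescaling map $\phi_Q$ and the identity $w(x,y)=k+w(\phi_Q(x),\phi_Q(y))$ explicit; the resulting $G_k$ necessarily agrees with the paper's $C_k$ since both equal $\mathcal{H}_n-\sum_{Q\in\mathcal{D}_k}\mathcal{H}_{\mu_n(Q)}\big(\phi_Q(\mathcal{P}(Q))\big)$. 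One small omission: the change of variables $x\mapsto\phi_{Q(x)}(x)$ carries a Jacobian $4^{-k\mu_n(Q)}$ per square, so the integral of the $Q$-th factor over $Q^{\mu_n(Q)}$ equals $4^{-k\mu_n(Q)}Z(\mu_n(Q),\beta)$ rather than $Z(\mu_n(Q),\beta)$; this factor is $\mathcal{F}_k$-measurable and hence cancels in the conditional normalization, but it should be stated. Your explicit treatment of the label-to-square assignment via exchangeability is a nice addition that the paper leaves implicit.
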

		\begin{proof}
		By the definition of the potential $w(x,y)$, we can rewrite the energy~\eqref{eq:def_of_hamiltonian} of a given configuration $(x_1,\ldots,x_n)\in \Sigma_n$ as
		\begin{equation}
			\label{eq:alternative_form_for_the_hamiltonian}
			\mathcal{H}_n(x_1,\ldots,x_n) = \binom{n}{2} + \sum_{j=1}^{\infty} \sum_{Q\in \mathcal{D}_j} \binom{\mu_n(Q)}{2}
		\end{equation}
		with the convention that $\binom{0}{2} = \binom{1}{2} = 0$. Indeed, each pair of points contributes one unit of energy for each dyadic square that contains both points.
		With~\eqref{eq:alternative_form_for_the_hamiltonian}, we can write for all $k\ge 1$
		\begin{equation}
			\label{eq:hamiltonian_with_conditioning}
			\mathcal{H}_n(x_1,\ldots,x_n) = C_k(x_1,\ldots,x_n) + \sum_{Q \in \mathcal{D}_k} \Big[\sum_{\substack{i<j \\ x_i,x_j\in \mathcal P(Q)}} w(x_i,x_j) \, \Big]
		\end{equation}
		with $C_k$ measurable with respect to $\mathcal{F}_{k-1}$. Now, conditional on $\mathcal{F}_k$, the distribution of $$\big\{\mu_n(Q) \, : \, Q\in \bigcup_{j=k+1}^{\infty} \mathcal{D}_j\big\}$$ depends only on $\big\{\mu_n(Q) \, : \, Q \in \mathcal{D}_k \big\}.$ Hence, in view of~\eqref{eq:hamiltonian_with_conditioning}, the conditional density of $\{ \mathcal{P}(Q) : \, Q \in \mathcal{D}_k\}$ is proportional to
		\begin{equation*}
			\prod_{Q \in \mathcal{D}_k} \exp\bigg(-\beta \sum_{\substack{i<j \\ x_i,x_j\in \mathcal P(Q)}} w(x_i,x_j) \bigg)\, .
		\end{equation*} 
		The conditional independence property is evident from the above. Furthermore, for each $Q\in \mathcal{D}_k$ we observe that conditional on $\mathcal{F}_k$, the marginal density of $\mathcal{P}(Q)$ is proportional to
		\[
		\exp\bigg(-\beta \sum_{\substack{i<j \\ x_i,x_j\in \mathcal P(Q)}} w(x_i,x_j) \bigg) \, , 
		\]
		which, since $\mu_n(Q) = |\mathcal P(Q)|$, has exactly the same law as a scaled version of $\nu_{\mu_n(Q),\beta}$.
	\end{proof}
	We will also need a basic estimate on the partition function $Z(n,\beta)$, which follows from the proof of~\cite[Lemma 3.3]{chatterjee}. Again, for the reader's convenience, we provide the proof below.
	\begin{claim}
		\label{claim:estimates_on_partition_function}
		There exists $C_\beta>0$ so that for all $n\ge 2$
		\[
		\Big|\log Z(n,\beta) + \frac{2\beta}{3} n^2\Big| \le C_\beta \, n\log n\, .
		\]
	\end{claim}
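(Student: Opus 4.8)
The plan is to prove matching upper and lower bounds on $\log Z(n,\beta)$, each accurate to an additive error of order $n\log n$; both rest on the identity~\eqref{eq:alternative_form_for_the_hamiltonian}, which expresses $\mathcal H_n$ as $\binom n2$ plus $\sum_{j\ge1}\sum_{Q\in\mathcal D_j}\binom{\mu_n(Q)}{2}$. \emph{Upper bound.} This is pure convexity. The function $g(x)=\binom x2=\tfrac12(x^2-x)$ is convex on $\bR$, and $\sum_{Q\in\mathcal D_j}\mu_n(Q)=n$ for every configuration, so Jensen's inequality gives $\sum_{Q\in\mathcal D_j}\binom{\mu_n(Q)}{2}\ge 4^j g(n/4^j)=\tfrac{n^2}{2\cdot4^j}-\tfrac n2$. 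Summing this over $j=1,\dots,\lfloor\log_4 n\rfloor$ and discarding the nonnegative contribution of the deeper levels yields, for \emph{every} point of $\Sigma_n$,
\[
\mathcal H_n\ \ge\ \binom n2+\frac{n^2}{6}-Cn\log n\ =\ \frac{2n^2}{3}-Cn\log n .
\]
Since $\Sigma_n=([0,1]^2)^n$ has Lebesgue measure $1$, integrating $e^{-\beta\mathcal H_n}$ at once gives $Z(n,\beta)\le\exp\!\big(-\tfrac{2\beta}{3}n^2+C_\beta n\log n\big)$.

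\emph{Lower bound.} Here I would restrict the integral to configurations that are as equidistributed as possible at \emph{every} dyadic scale at once. Fix $J$ with $4^J\le n<4^{J+1}$ and let $A\subset\Sigma_n$ be the set of configurations with $\mu_n(Q)\in\{\lfloor n/4^j\rfloor,\lceil n/4^j\rceil\}$ for all $j\ge1$ and all $Q\in\mathcal D_j$; such configurations exist (distribute the points of each dyadic square as evenly as possible among its four children, recursively from the top). On $A$ one has $\binom{\mu_n(Q)}{2}\le\tfrac12(n/4^j+1)^2$ when $4^j\le n$ and $\binom{\mu_n(Q)}{2}=0$ once $4^j>n$, so summing over scales gives $\mathcal H_n\le\binom n2+\tfrac{n^2}{6}+Cn\log n=\tfrac{2n^2}{3}+Cn\log n$ throughout $A$.

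It remains to bound ${\tt Leb}(A)$ from below. A configuration in a suitable sub-family of $A$ is obtained by first assigning the $n$ labelled points to the $4^J$ level-$J$ squares according to a fixed balanced count profile $(m_1,\dots,m_{4^J})$ with $m_i\in\{1,2,3,4\}$ (offering $\binom{n}{m_1,\dots,m_{4^J}}$ choices), then placing the $m_i\le4$ points of each level-$J$ square into distinct level-$(J+1)$ sub-squares (at least one valid way each), and finally letting each point range freely over its resulting leaf square of area $4^{-(J+1)}$; distinct choices produce disjoint product sets. Hence ${\tt Leb}(A)\ge\binom{n}{m_1,\dots,m_{4^J}}\,4^{-(J+1)n}$, and since the $m_i$ are bounded, Stirling's formula gives $\log\binom{n}{m_1,\dots,m_{4^J}}\ge n\log n-Cn\ge Jn\log4-Cn$ (using $4^J\le n$); consequently $\log{\tt Leb}(A)\ge Jn\log4-(J+1)n\log4-Cn=-C'n$. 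Combining,
\[
Z(n,\beta)\ \ge\ {\tt Leb}(A)\,\exp\!\big(-\beta\max_{A}\mathcal H_n\big)\ \ge\ \exp\!\Big(-\frac{2\beta}{3}n^2-C_\beta n\log n\Big).
\]

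The two bounds together give $\big|\log Z(n,\beta)+\tfrac{2\beta}{3}n^2\big|\le C_\beta n\log n$ for all large $n$, and enlarging $C_\beta$ absorbs the finitely many small values. The one point requiring care is the lower bound: insisting on balance \emph{at all scales} (rather than only at one intermediate scale, which would leave the finer levels free to clump and inflate $\mathcal H_n$ on $A$) is what keeps the energy on $A$ within $O(n\log n)$ of $\tfrac{2n^2}{3}$, while the entropy $Jn\log4$ of the balanced multinomial must cancel, up to lower order, the cost $Jn\log4$ of confining the points to their leaf squares; everything else is routine convexity and Stirling bookkeeping. (Alternatively one could induct on $n$ via the self-similarity identity $Z(n,\beta)=e^{-\beta\binom n2}4^{-n}\sum_{n_1+\dots+n_4=n}\binom{n}{n_1,\dots,n_4}\prod_{i=1}^4 Z(n_i,\beta)$, which also pins down the constant $\tfrac{2\beta}{3}$, but the direct estimate above is shorter.)
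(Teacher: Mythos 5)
Your proof is correct, and while the upper bound on $Z(n,\beta)$ follows essentially the same route as the paper (bounding $\mathcal H_n$ from below pointwise via Jensen/Cauchy--Schwarz applied to $\sum_{Q\in\mathcal D_j}\binom{\mu_n(Q)}{2}$ at each scale $4^j\le n$), your lower bound is genuinely different. The paper applies Jensen's inequality to the partition integral itself, $Z(n,\beta)\ge\exp(-\beta\int_{\Sigma_n}\mathcal H_n)$, and then just computes $\int\mathcal H_n=\binom n2\iint w\,{\rm d}x\,{\rm d}y=\binom n2\cdot\tfrac43$, which is a one-line calculation. You instead restrict the integral to an explicit set $A$ of configurations that are balanced at \emph{every} dyadic scale, estimate $\mathcal H_n$ from above on $A$, and bound ${\tt Leb}(A)$ from below via a multinomial count and Stirling. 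Your approach is longer and requires the careful bookkeeping you flag (the entropy of the balanced profile must cancel the $(J+1)n\log 4$ cost of confining points to leaf squares), but it is more constructive: it exhibits concretely which configurations carry the bulk of the partition function, rather than averaging over all of $\Sigma_n$. Both yield the same $O(n\log n)$ error. Your parenthetical remark that one could also induct on $n$ via the self-similarity identity for $Z(n,\beta)$ is accurate and is in the spirit of how the paper uses Proposition~\ref{prop:self_similarity_and_conditional_independence} elsewhere, though the paper does not take that route for this particular claim.
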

	\begin{proof}
		Note that
		\[
		Z(n,\beta) = \int_{[0,1]^{2n}} e^{-\beta \mathcal{H}_n(x_1,\ldots,x_n)} \, {\rm d} \, {\tt Leb}(x_1)\cdots {\rm d} \, {\tt Leb}(x_n)
		\]
		with $x_1,\ldots,x_n\in[0,1]^2$. To bound $Z(n,\beta)$ from above, denote by $k\ge 1$ the maximal integer such that $2^k \le \sqrt{n}$. Then, for all $(x_1,\ldots,x_n)\in\Sigma_n$, we have
		\begin{align}
			\label{eq:estimate_on_partition_function_lower_bound_on_hamiltonian} \nonumber
			\mathcal{H}_{n}(x_1,\ldots,x_n) &\stackrel{\eqref{eq:alternative_form_for_the_hamiltonian}}{\ge}  \binom{n}{2} + \sum_{j=1}^{k} \sum_{Q\in \mathcal{D}_j} \binom{\mu_n(Q)}{2} \\ & = \binom{n}{2} +\frac{1}{2} \sum_{j=1}^{k} \sum_{Q\in \mathcal{D}_j} \big(\mu_n(Q)^2 - \mu_{n}(Q)\big) =  \binom{n}{2} -\frac{nk}{2} +\frac{1}{2} \sum_{j=1}^{k} \sum_{Q\in \mathcal{D}_j} \mu_n(Q)^2 \, .  
		\end{align}
		By Cauchy-Schwarz inequality, we have
		\[
		\sum_{Q\in \mathcal{D}_j} \mu_n(Q)^2 \ge \frac{1}{|\mathcal{D}_j|} \bigg(\sum_{Q\in \mathcal{D}_j} \mu_n(Q)\bigg)^2 = \frac{n^2}{4^j} \, .
		\]
		Plugging into~\eqref{eq:estimate_on_partition_function_lower_bound_on_hamiltonian}, we see that for all $x_1,\ldots,x_n\in[0,1]^2$ 
		\[
		\mathcal{H}_n(x_1,\ldots,x_n) \ge \binom{n}{2} -\frac{nk}{2} +\frac{n^2}{2} \sum_{j=1}^{k} 4^{-j} = \frac{2n^2}{3} -\frac{nk}{2} -\frac{n}{2} -\frac{n^2}{6 \times 4^k} 
		\]
		which, by our choice of $k$, implies that
		\begin{equation*}
			Z(n,\beta) \le \exp\Big(-\frac{2\beta}{3}n^2 + C_\beta \, n \log n \Big)
		\end{equation*}
		for all $n\ge 2$. To get the complementary lower bound, we apply Jensen's inequality to get
		\[
		Z(n,\beta) \ge \exp\bigg(-\beta \int_{[0,1]^{2n}} \mathcal{H}_n(x_1,\ldots,x_n) \, {\rm d} \, {\tt Leb}(x_1)\cdots {\rm d} \, {\tt Leb}(x_n)\bigg)\, .
		\]
		We have
		\begin{align*}
			\int_{[0,1]^{2n}}      \mathcal{H}_n(x_1,\ldots,x_n) \, {\rm d} \, {\tt Leb}(x_1)\cdots {\rm d} \, {\tt Leb}(x_n)
			& = \int_{[0,1]^{2n}} \sum_{1\le i <j\le n} w(x_i,x_j) \, {\rm d} \, {\tt Leb}(x_1)\cdots {\rm d} \, {\tt Leb}(x_n) \\
			& = \binom{n}{2} \iint_{[0,1]^2\times[0,1]^2} w(x,y) \, {\rm d} \, {\tt Leb}(x) \, {\rm d} \, {\tt Leb}(y) \, .
		\end{align*}
		It is straightforward to check that (see~\cite[Lemma~3.1]{chatterjee}) 
		\[
		\iint_{[0,1]^2\times[0,1]^2} w(x,y) \, {\rm d} \, {\tt Leb}(x) \, {\rm d} \, {\tt Leb}(y) = 3\sum_{j=1}^{\infty} \frac{j}{4^{j}} = \frac{4}{3}\, ,
		\]
		which implies that
		\[
		Z(n,\beta) \ge \exp\Big(- \frac{4\beta}{3} \, \binom{n}{2}  \Big) = \exp\Big(-\frac{2\beta}{3} n^2 + \frac{2\beta}{3}n\Big)\, .
		\]
		Combining the upper and lower bound, the claim follows.
	\end{proof}
	To conclude this section, we cite another claim from~\cite{chatterjee}. 
	\begin{claim}[{\cite[Corollary~3.4]{chatterjee}}]
		\label{claim:bound_on_ratio_of_partitions}
		For all $n\ge 1$, $\beta>0$ and $k\ge -n+1$ we have
		\[
		\frac{Z(n+k,\beta)}{Z(n,\beta)} \le \exp\bigg(-\frac{4}{3} \beta n k - \frac{2}{3}\beta k(k-1) + C^\prime \beta|k|\log(n+|k|+1)\bigg)
		\]
		where $C^\prime$ is some positive absolute constant.
	\end{claim}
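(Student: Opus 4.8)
The plan is to reduce the statement to a one-step comparison of consecutive partition functions and then telescope. Precisely, I would first prove that for every $m\ge 1$,
\[
e^{-\frac43\beta m}\ \le\ \frac{Z(m+1,\beta)}{Z(m,\beta)}\ \le\ \exp\!\Big(-\tfrac43\beta m+C_\beta\log(m+2)\Big).
\]
Granting this, the Claim follows by summation: for $k\ge 1$, using $\log\frac{Z(n+k,\beta)}{Z(n,\beta)}=\sum_{m=n}^{n+k-1}\log\frac{Z(m+1,\beta)}{Z(m,\beta)}$ together with $\sum_{m=n}^{n+k-1}m=nk+\binom k2$ and $\sum_{m=n}^{n+k-1}\log(m+2)\le k\log(n+k+1)$ gives the main term $-\tfrac43\beta nk-\tfrac23\beta k(k-1)$ plus an error of the claimed form (a few small values of $n$ being checked separately). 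For $-n<k\le 0$ one telescopes the reciprocals $Z(m,\beta)/Z(m+1,\beta)$ and uses only the lower bound; since $\sum_{m=n-|k|}^{n-1}m=n|k|-\tfrac{|k|(|k|+1)}2$, this already produces the Claim, with no logarithmic term needed. (When $|k|$ is at least a fixed multiple of $n$, the Claim anyway follows directly from Claim~\ref{claim:estimates_on_partition_function}; the content is the range $1\le |k|\ll n$, covered by the one-step estimate.)

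The lower bound is the soft part. Integrating out one point, $\frac{Z(m+1,\beta)}{Z(m,\beta)}=\bE_{\nu_{m,\beta}}\big[\int_{[0,1]^2}e^{-\beta\sum_{i=1}^m w(p_i,y)}\,{\rm d}\,{\tt Leb}(y)\big]$, and since ${\rm d}\,{\tt Leb}$ is a probability measure on $[0,1]^2$, Jensen's inequality for $t\mapsto e^{-\beta t}$ gives $\int_{[0,1]^2}e^{-\beta\sum_i w(p_i,y)}\,{\rm d}\,{\tt Leb}(y)\ge\exp\!\big(-\beta\int_{[0,1]^2}\sum_i w(p_i,y)\,{\rm d}\,{\tt Leb}(y)\big)=e^{-\frac43\beta m}$, using $\int_{[0,1]^2}w(x,y)\,{\rm d}\,{\tt Leb}(y)=\tfrac43$ for each $x$ (computed already in the proof of Claim~\ref{claim:estimates_on_partition_function}).

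For the upper bound I would exploit the exact self-similar recursion for $Z$. Combining \eqref{eq:alternative_form_for_the_hamiltonian} with the rescaling of the four children of $[0,1]^2$ (as in the proof of Proposition~\ref{prop:self_similarity_and_conditional_independence}) one gets $\mathcal H_m=\binom m2+\sum_{Q\in\mathcal D_1}\mathcal H_{\mu_m(Q)}(\text{rescaled points in }Q)$, hence
\[
Z(m,\beta)=e^{-\beta\binom m2}4^{-m}\!\!\sum_{\ell_1+\cdots+\ell_4=m}\!\binom{m}{\ell_1,\dots,\ell_4}\prod_{i=1}^4 Z(\ell_i,\beta)\ =:\ e^{-\beta\binom m2}4^{-m}\,S_m .
\]
Since $\binom{m+1}2-\binom m2=m$, it suffices to bound $S_{m+1}/S_m$. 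One reorganises $S_{m+1}$ by removing a point from a fullest child: this rewrites $S_{m+1}=\sum_{\vec b:\,|\vec b|=m}\binom m{\vec b}\prod_iZ(b_i,\beta)\,T(\vec b)$ with $T(\vec b)=\sum_{j}\tfrac{m+1}{b_j+1}\tfrac{Z(b_j+1,\beta)}{Z(b_j,\beta)}$, the sum over the (at most four) coordinates $j$ with $b_j\ge\lceil m/4\rceil-1$. The weights $\binom m{\vec b}\prod_iZ(b_i,\beta)/S_m$ concentrate on near-balanced $\vec b$ (all $b_i$ close to $m/4$): this comes from the Cauchy--Schwarz/convexity bound in the proof of Claim~\ref{claim:estimates_on_partition_function} together with the bound $Z(\ell,\beta)\le e^{-\frac23\beta\ell^2+C_\beta\ell\log\ell}$, which makes unbalanced compositions super-exponentially unlikely, while the handful of genuinely extreme compositions (one child carrying almost all points) contribute negligibly and are bounded crudely by Claim~\ref{claim:estimates_on_partition_function}. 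On the bulk each relevant $b_j$ is of order $m/4$; feeding in the one-step upper bound at the smaller scale $b_j$ by strong induction on $m$, the geometric factor $\sum_j\tfrac{m+1}{b_j+1}e^{-\frac43\beta b_j}$ is $e^{-\frac\beta3 m}$ up to constants, which together with the $\tfrac14 e^{-\beta m}$ prefactor closes the recursion.

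The step I expect to be the main obstacle is precisely this last one. Claim~\ref{claim:estimates_on_partition_function} controls $\log Z$ only up to $O(m\log m)$, which used as a black box is far too lossy to yield a single logarithmic factor in the one-step ratio; the gain must come from the fact that these large finite-size corrections are common to numerator and denominator and cancel in $S_{m+1}/S_m$ and in each $Z(b_j+1,\beta)/Z(b_j,\beta)$. Turning this into an estimate requires a self-improving concentration statement for the child-counts $\mu_m(Q)$, $Q\in\mathcal D_1$, around $m/4$ — finer than what Claim~\ref{claim:estimates_on_partition_function} gives directly — carried through the induction in a way that does not let multiplicative constants accumulate (e.g.\ by inducting on a polynomial-overhead form of the one-step bound and handling the finitely many small $m$ by hand). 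Making this quantitative is the crux.
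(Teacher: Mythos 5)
The paper does not prove this statement itself — it is cited verbatim from \cite[Corollary~3.4]{chatterjee} — so there is no in-paper proof to compare against, and I will assess your argument on its own. The strategy of reducing to a one-step ratio bound and telescoping is sound: the algebra $\sum_{m=n}^{n+k-1}m=nk+\binom k2$ and $\sum\log(m+2)\le k\log(n+k+1)$ is correct, the treatment of $k<0$ via reciprocals and the lower bound alone is correct, the Jensen proof of $Z(m+1,\beta)/Z(m,\beta)\ge e^{-\frac43\beta m}$ is correct and matches the lower-bound mechanism already used in the paper's Claim~\ref{claim:estimates_on_partition_function}, and your observation that the regime $|k|\gtrsim n$ already follows from Claim~\ref{claim:estimates_on_partition_function} is right (the error there degrades only to $(n+|k|)\log$, but that is $\lesssim|k|\log$ when $|k|\gtrsim n$).

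The upper-bound half of the one-step estimate, $Z(m+1,\beta)/Z(m,\beta)\le e^{-\frac43\beta m+C_\beta\log(m+2)}$, is where the proof is not finished, and this is not a cosmetic gap. Pascal's identity for multinomials gives the clean form
$\frac{Z(m+1,\beta)}{Z(m,\beta)}=\frac{e^{-\beta m}}{4}\,\bE_{\nu_{m,\beta}}\big[\sum_{j=1}^4\frac{Z(\mu_m(Q_j)+1,\beta)}{Z(\mu_m(Q_j),\beta)}\big]$ (rather than the slightly awkward ``fullest child'' reorganisation you propose), and feeding the inductive hypothesis in at the random scales $\mu_m(Q_j)<m$ reduces the inductive step to showing $\bE_{\nu_{m,\beta}}\big[e^{-\frac43\beta\,\Delta_m(Q_1)}\big]\le\mathrm{poly}(m)$. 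But the only handle on this moment-generating function is a tail bound on the child counts of the type in Lemma~\ref{lemma:tail_probability_for_disc_at_top_level}, which is itself derived \emph{from} Claim~\ref{claim:bound_on_ratio_of_partitions}. If one tries to close the loop inductively, the error does not contract: a hypothesis with error $C|k|\log$ feeds a $C\log(m+1)\sum|k_j|$ term into the tail bound, which against the Gaussian factor produces $e^{C'\log^2(m+1)}$ — not $\mathrm{poly}(m)$ — and a polynomial-overhead hypothesis fares no better. So as written the induction degrades rather than closes, which is precisely the step you yourself single out as the ``main obstacle.'' A genuinely different mechanism, decoupling the error at scale $m$ from the error inherited at child scales, is needed; merely ``handling finitely many small $m$ by hand'' does not resolve it.
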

	\subsection{Discrepancy at top level of the tree}
	Towards the proof of Theorem~\ref{thm:Micro_JLM}, the first key observation is that the fluctuations in dyadic squares has sub-Gaussian tail (recall~\eqref{eq:introduction_concentration_general_cube} from the introduction). We start by proving this observation for squares at the top level $\mathcal{D}_1$. Throughout, let $Q_1,\ldots,Q_4$ be the four dyadic squares in $\mathcal{D}_1$ and denote by $X_j = \Delta_n(Q_j)$ the corresponding discrepancy for $j=1,\ldots,4$. We note that the support of the random vector $(X_1,\ldots,X_4)$ is given by
	\begin{equation}
		\label{eq:support_of_top_level_disc}
		\text{Supp}({\bf X}) = \Big\{(k_1,\ldots,k_4) \in \big[\tfrac{n}{4},\tfrac{3n}{4}\big] \cap \big(\bZ-\tfrac{n}{4}\big) \, : \, k_1+\ldots+k_4 = 0 \Big\} \, .
	\end{equation}
	\begin{lemma}
		\label{lemma:tail_probability_for_disc_at_top_level}
		There exist an absolute constant $C>0$ so that for all $n\ge 1$ and for all $(k_1,\ldots,k_4)\in\text{\normalfont Supp}({\bf X})$ we have
		\begin{equation*}
			\bP\Big[\big(X_1,\ldots,X_4\big) = \big(k_1,\ldots,k_4\big)\Big] \le \exp\Big(-\frac{2\beta}{3}\sum_{j=1}^{4} k_j^2 + C\beta \log(n+1) \sum_{j=1}^4 |k_j|\Big)\, .
		\end{equation*}
	\end{lemma}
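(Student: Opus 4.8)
The plan is to write down an \emph{exact} formula for the probability in the statement and then bound it with the partition-function estimates of Claims~\ref{claim:estimates_on_partition_function} and~\ref{claim:bound_on_ratio_of_partitions}. Fix $(k_1,\dots,k_4)\in\text{Supp}(\mathbf{X})$ and set $m_i:=\mu_n(Q_i)=\tfrac n4+k_i$, so that $m_1,\dots,m_4\ge 0$ are integers with $\sum_i m_i=n$ and, crucially, $\sum_i k_i=0$. By the symmetry of $\mathcal{H}_n$ under permutations of the points, the event $\{\mu_n(Q_i)=m_i,\ i=1,\dots,4\}$ decomposes into $\binom{n}{m_1,\dots,m_4}$ equiprobable pieces according to which point lands in which square, and on the piece where the first $m_1$ points lie in $Q_1$, the next $m_2$ in $Q_2$, and so on, formula~\eqref{eq:alternative_form_for_the_hamiltonian} gives the splitting
\[
\mathcal{H}_n(x_1,\dots,x_n)=\binom n2+\sum_{i=1}^{4}\mathcal{H}_{m_i}\bigl(\phi_i(x^{(i)})\bigr),
\]
where $x^{(i)}$ is the block of points lying in $Q_i$ and $\phi_i\colon Q_i\to[0,1]^2$ is the affine dyadic rescaling, for which $w(x,y)=w(\phi_i(x),\phi_i(y))+1$ whenever $x,y\in Q_i$. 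Changing variables $y=\phi_i(x)$ block by block (each block of $m_i$ points contributing a Jacobian $4^{-m_i}$) then produces the identity
\[
\bP\bigl[(X_1,\dots,X_4)=(k_1,\dots,k_4)\bigr]=\binom{n}{m_1,\dots,m_4}\,4^{-n}\,\frac{e^{-\beta\binom n2}\prod_{i=1}^{4}Z(m_i,\beta)}{Z(n,\beta)}.
\]

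Assume for the moment that $4\mid n$ and that all $m_i\ge 1$. The next step is to discard the prefactor that does not carry the Gaussian decay. Restricting the integral defining $Z(n,\beta)$ to configurations with $\mu_n(Q_i)=n/4$ for every $i$ and running the previous computation backwards gives $Z(n,\beta)\ge\binom{n}{n/4,\dots,n/4}4^{-n}e^{-\beta\binom n2}Z(n/4,\beta)^4$; since the balanced multinomial coefficient is the largest among all compositions of $n$ into four parts, $\binom{n}{m_1,\dots,m_4}\le\binom{n}{n/4,\dots,n/4}$, and combining the two observations collapses the identity above to the clean inequality
\[
\bP\bigl[(X_1,\dots,X_4)=(k_1,\dots,k_4)\bigr]\ \le\ \prod_{i=1}^{4}\frac{Z(m_i,\beta)}{Z(n/4,\beta)}.
\]

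It remains to estimate each factor. Applying Claim~\ref{claim:bound_on_ratio_of_partitions} with $n'=n/4$ and $k=k_i$ (legitimate since $m_i\ge 1$) gives
\[
\frac{Z(n/4+k_i,\beta)}{Z(n/4,\beta)}\ \le\ \exp\Bigl(-\tfrac{\beta n}{3}k_i-\tfrac23\beta k_i^2+\tfrac23\beta k_i+C\beta|k_i|\log\bigl(\tfrac n4+|k_i|+1\bigr)\Bigr),
\]
and multiplying over $i=1,\dots,4$, the constraint $\sum_i k_i=0$ annihilates both the $-\tfrac{\beta n}{3}\sum_i k_i$ term and the $\tfrac23\beta\sum_i k_i$ term, leaving exactly $\exp\bigl(-\tfrac{2\beta}{3}\sum_i k_i^2+C\beta\log(n+1)\sum_i|k_i|\bigr)$ once one uses $\log(\tfrac n4+|k_i|+1)\le 2\log(n+1)$ and absorbs the factor into an absolute constant $C$. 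This is the assertion. The step I expect to be the real content is precisely this last estimate: one must use the \emph{refined} ratio bound of Claim~\ref{claim:bound_on_ratio_of_partitions} and cannot get away with the coarser asymptotic $\log Z(m,\beta)=-\tfrac{2\beta}{3}m^2+O(\beta m\log m)$ of Claim~\ref{claim:estimates_on_partition_function}, since the latter, fed into the exact identity, yields an error of order $\beta n\log n$, whereas the statement demands the error be of order $\beta\log(n+1)\sum_i|k_i|$ and $\sum_i|k_i|$ may be $O(1)$. The terms of the refined bound that grow linearly in $n$, and which would be harmful term by term, cancel only after summation because $\sum_i k_i=0$.

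Finally, the two degenerate cases. If some $m_i=0$, then $\sum_j|k_j|\ge n/4$ automatically, so one returns to the exact identity, bounds the multinomial coefficient by $4^n$, and feeds Claim~\ref{claim:estimates_on_partition_function} into all the partition functions: this gives $\log\bP\le-\tfrac{2\beta}{3}\sum_j k_j^2+O(\beta n\log n)$, and the error is now $O(\beta\log(n+1)\sum_j|k_j|)$ because $\sum_j|k_j|\gtrsim n$. If $4\nmid n$, one repeats the main argument with $N=\lfloor n/4\rfloor$ and a balanced composition $(m_1^{\ast},\dots,m_4^{\ast})$, $m_i^{\ast}\in\{N,N+1\}$, $\sum_i m_i^{\ast}=n$, in place of $(n/4,\dots,n/4)$; the rounding introduces only an extra $O(\beta\log n\sum_j|k_j|)$, absorbed into the constant. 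A bounded range of small $n$ is dealt with by enlarging $C$.
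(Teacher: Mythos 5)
Your proof is correct and follows essentially the same route as the paper: derive the exact multinomial-times-partition-function identity for $\bP[(X_1,\ldots,X_4)=(k_1,\ldots,k_4)]$, dispose of the multinomial prefactor by comparing to the balanced composition, and then feed the remaining product of ratios $\prod_j Z(t_j,\beta)/Z(m_j,\beta)$ into Claim~\ref{claim:bound_on_ratio_of_partitions}, where the constraint $\sum_j k_j=0$ kills the terms linear in $n$. Your lower bound $Z(n,\beta)\ge\binom{n}{n/4,\ldots,n/4}4^{-n}e^{-\beta\binom n2}Z(n/4,\beta)^4$ is the same as the paper's observation, just phrased as ``restrict the integral'' rather than as ``divide by the probability of the balanced event, which is at most one.'' The paper sidesteps the divisibility issue by working from the start with a near-balanced integer composition $(m_1,\ldots,m_4)$ with $|m_j-n/4|\le1$, whereas you treat $4\nmid n$ as a separate case; this is a stylistic difference only. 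Your explicit handling of the case $\mu_n(Q_i)=0$ (where Claim~\ref{claim:bound_on_ratio_of_partitions} is not directly applicable since it needs $n+k\ge 1$) is in fact slightly more careful than the paper, which glosses over that boundary case, and your remark that the coarse estimate of Claim~\ref{claim:estimates_on_partition_function} cannot replace the refined ratio bound is the right explanation of where the precision of the argument actually lives.
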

	\begin{proof}
		The starting point of the proof is the following recursive formula for the probability
		\begin{align}
			\nonumber
			\label{eq:probability_of_disc_top_level_equality}
			\bP\Big[\big(X_1,&\ldots,X_4\big) = \big(k_1,\ldots,k_4\big)\Big] \\ &= 4^{-n}e^{-\beta\binom{n}{2}} \frac{n!}{(\frac{n}{4}+k_1)!(\frac{n}{4}+k_2)! (\frac{n}{4}+k_3)! (\frac{n}{4}+k_4)!} \frac{\prod_{j=1}^{4}Z\big(\frac{n}{4} + k_j,\beta\big)}{Z\big(n,\beta\big)} \, .
		\end{align}
		Indeed, denoting by $t_j = \frac{n}{4} + k_j$ for $j=1,\ldots,4$, Proposition~\ref{prop:self_similarity_and_conditional_independence} implies that 
		\begin{align*}
			\bP\Big[\big(X_1,&\ldots,X_4\big) = \big(k_1,\ldots,k_4\big)\Big] \\ &= \frac{1}{Z(n,\beta)} \int_{[0,1]^{2n}} \mathbf{1}_{\big\{(X_1,\ldots,X_4 = (k_1,\ldots,k_4)\big\}} e^{-\beta \mathcal{H}_n(x_1,\ldots,x_n)} \, {\rm d} \, {\tt Leb}(x_1)\cdots {\rm d} \, {\tt Leb}(x_n) \\ &= \frac{n!}{t_1!t_2! t_3! t_4!} e^{-\beta\binom{n}{2}} \frac{1}{Z(n,\beta)} \prod_{j=1}^{4} \bigg( \int_{[0,\frac{1}{2}]^{2t_j}} e^{-\beta \mathcal{H}_{t_j}(x_1,\ldots,x_{t_j})} \, {\rm d} \, {\tt Leb}(x_1)\cdots {\rm d} \, {\tt Leb}(x_{t_j}) \bigg) \, .
		\end{align*}
		By changing variables $x_j \mapsto 2x_j$ we see that
		\[
		\int_{[0,\frac{1}{2}]^{2t_j}} e^{-\beta \mathcal{H}_{t_j}(x_1,\ldots,x_{t_j})} \, {\rm d} \, {\tt Leb}(x_1)\cdots {\rm d} \, {\tt Leb}(x_{t_j}) = 4^{-t_j} Z(t_j,\beta) 
		\]
		and since $t_1+\ldots+t_4 = n$, the equality~\eqref{eq:probability_of_disc_top_level_equality} follows. Now, for integers $t_1,\ldots,t_4\ge 0$ such that $t_1+\ldots+t_4= n$, we denote the multinomial coefficient by
		\[
		\psi(t_1,\ldots,t_4) = \frac{n!}{t_1! \, t_2! \, t_3 ! \,  t_4 !}\, .
		\]
		Let $m_1,\ldots,m_4$ be non-negative integers such that $m_1+\ldots+m_4 = n$ and $|m_j - \frac{n}{4}| \le 1$ for all $j=1,\ldots,4$. As the multinomial coefficient is maximized when $t_j = m_j$, we see that
		\[
		\frac{\psi\big(t_1 , t_2 , t_3, t_4 \big)}{\psi\big(m_1,m_2,m_3,m_4\big)} \le 1 
		\]
		for all $(k_1,\ldots,k_4)\in \text{Supp}(\bf{X})$. This, together with~\eqref{eq:probability_of_disc_top_level_equality}, implies that
		\begin{align*}
			\bP\Big[\big(X_1,\ldots,X_4\big) = & \big(k_1,\ldots,k_4\big)\Big] \\ &\le \frac{\bP\Big[\big(X_1,\ldots,X_4\big) = \big(k_1,\ldots,k_4\big)\Big]}{\bP\Big[\big(X_1,\ldots,X_4\big) = \big(m_1 - \frac{n}{4},\ldots,m_4 - \frac{n}{4}\big)\Big]} \\ & \le \frac{\psi\big(\frac{n}{4} + k_1 , \frac{n}{4} + k_2 , \frac{n}{4} + k_3, \frac{n}{4} + k_4 \big)}{\psi\big(m_1,m_2,m_3,m_4\big)} \, \prod_{j=1}^{4} \frac{Z\big(\frac{n}{4} + k_j,\beta\big)}{Z\big(m_j ,\beta\big)} \le \prod_{j=1}^{4} \frac{Z\big(\frac{n}{4} + k_j,\beta\big)}{Z\big(m_j ,\beta\big)}\, .
		\end{align*}
		To bound the product above, we apply Claim~\ref{claim:bound_on_ratio_of_partitions} to each of its element. Together with the fact that $\sum_{j=1}^{4} k_j = 0$, we arrive at the inequality
		\begin{align*}
			\bP\Big[&\big(X_1,\ldots,X_4\big) = \big(k_1,\ldots,k_4\big)\Big] \\ &\le \prod_{j=1}^{4} \frac{Z\big(\frac{n}{4} + k_j,\beta\big)}{Z\big( m_j ,\beta\big)}
			\le \exp\bigg(-\frac{\beta n}{3}\sum_{j=1}^{4}k_j - \frac{2\beta}{3}\sum_{j=1}^4 k_j(k_j-1) + C^\prime \beta \sum_{j=1}^4 |k_j| \log\big(\frac{n}{4} + |k_j|+1\big) \bigg) \\
			& \le \exp\bigg(-\frac{2\beta}{3} \sum_{j=1}^4 k_j^2 + C^\prime \beta \log(n+1) \sum_{j=1}^4 |k_j|\bigg)
		\end{align*}
		as desired.
	\end{proof}
	Using Lemma~\ref{lemma:tail_probability_for_disc_at_top_level}, we obtain upper bounds on the moment generating function of the discrepancy at the top level of the tree. The following lemma would be essential in proving the upper bound in Theorem~\ref{thm:Micro_JLM} for the range $1<\alpha<2$, see Section~\ref{sec:UB_large_deviations} below.
	\begin{lemma}
		\label{lemma:moment_generating_funtion_upper_bound_top_level_no_assumption}
		There exist $C_\beta>0$ so that for all $n\ge 2$ and $\boldsymbol{\la} = (\lambda_1,\ldots,\la_4)\in \bR^4$ we have
		\begin{equation*}
			\log\bE\bigg[\exp\Big(\sum_{j=1}^{4} \la_j X_j \Big)\bigg] \le  C_\beta \big(1 + \| \boldsymbol{\la} \|^2\big)\log^2(n+1) \, .
		\end{equation*} 
	\end{lemma}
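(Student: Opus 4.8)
The plan is to deduce the moment generating function bound directly from the pointwise tail estimate of Lemma~\ref{lemma:tail_probability_for_disc_at_top_level}. Writing the expectation as a sum over the discrete support~\eqref{eq:support_of_top_level_disc},
\[
\bE\Big[\exp\Big(\sum_{j=1}^{4}\la_j X_j\Big)\Big] = \sum_{(k_1,\dots,k_4)\in\text{Supp}(\mathbf{X})}\exp\Big(\sum_{j=1}^{4}\la_j k_j\Big)\,\bP\big[(X_1,\dots,X_4)=(k_1,\dots,k_4)\big],
\]
and inserting the bound from Lemma~\ref{lemma:tail_probability_for_disc_at_top_level}, which is a genuine product over $j$, every summand is controlled by $\prod_{j=1}^{4}\exp\big(\la_j k_j - \tfrac{2\beta}{3}k_j^2 + C\beta\log(n+1)|k_j|\big)$. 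Since all terms are nonnegative, dropping the constraint $\sum_j k_j=0$ and the range restriction $|k_j|\le\tfrac{3n}{4}$ only enlarges the right-hand side, so the whole expression is at most $\prod_{j=1}^{4}S_j$, where
\[
S_j = \sum_{k\in\bZ-n/4}\exp\Big(\la_j k - \tfrac{2\beta}{3}k^2 + C\beta\log(n+1)\,|k|\Big).
\]

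Next I would estimate each one-dimensional lattice sum $S_j$ by completing the square twice. Setting $b_j = |\la_j| + C\beta\log(n+1)$ and using $\la_j k \le |\la_j|\,|k|$, the exponent is at most $-\tfrac{2\beta}{3}k^2 + b_j|k|$; splitting off half of the Gaussian, $-\tfrac{2\beta}{3}k^2 + b_j|k| = -\tfrac{\beta}{3}k^2 + \big(-\tfrac{\beta}{3}k^2 + b_j|k|\big) \le -\tfrac{\beta}{3}k^2 + \tfrac{3 b_j^2}{4\beta}$. Hence $S_j \le \exp\big(\tfrac{3b_j^2}{4\beta}\big)\sum_{k\in\bZ-n/4}\exp\big(-\tfrac{\beta}{3}k^2\big)$, and comparing the remaining Gaussian lattice sum with its integral (a unimodal function summed over a unit lattice is bounded by its maximum plus its integral) gives $\sum_{k\in\bZ-n/4}\exp\big(-\tfrac{\beta}{3}k^2\big)\le 1+\sqrt{3\pi/\beta}$, uniformly in $n$. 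Since $b_j^2\le 2\la_j^2 + 2C^2\beta^2\log^2(n+1)$, this yields $\log S_j \le C_\beta\big(1+\la_j^2+\log^2(n+1)\big)$, and since $n\ge 2$ forces $\log^2(n+1)\gtrsim 1$, we get $\log S_j\le C_\beta(1+\la_j^2)\log^2(n+1)$.

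Summing over $j=1,\dots,4$ then gives
\[
\log\bE\Big[\exp\Big(\sum_{j=1}^{4}\la_j X_j\Big)\Big] \le \sum_{j=1}^{4}\log S_j \le C_\beta\sum_{j=1}^{4}(1+\la_j^2)\log^2(n+1) \le C_\beta\big(1+\|\boldsymbol{\la}\|^2\big)\log^2(n+1),
\]
which is the claimed bound. I do not expect any serious obstacle here: the only substantive inputs are the already-established Lemma~\ref{lemma:tail_probability_for_disc_at_top_level} and an elementary completion-of-square together with a Gaussian lattice-sum estimate. The one point worth flagging is that the correction term $C\beta\log(n+1)|k|$ in Lemma~\ref{lemma:tail_probability_for_disc_at_top_level} is only \emph{linear} in $|k|$, so it is harmlessly absorbed by the quadratic Gaussian factor at the cost of the $\log^2(n+1)$ prefactor --- which is precisely the origin of the logarithmic loss in the statement.
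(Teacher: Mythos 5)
Your proof is correct and takes essentially the same approach as the paper: apply Lemma~\ref{lemma:tail_probability_for_disc_at_top_level} pointwise, drop the constraint $\sum_j k_j=0$ and the range restriction so the sum factors into a product of one-dimensional lattice sums, then control each factor by completing the square and comparing the Gaussian lattice sum to its integral. The only organizational difference is that the paper packages the lattice-sum estimate as the standalone Claim~\ref{claim:bound_on_MGF_gaussian_and_folded} (after a cosmetic rescaling $k\mapsto 4k$ so the sum runs over $\bZ$), because it reuses that claim later; you carry out the same computation inline.
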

	To prove Lemma~\ref{lemma:moment_generating_funtion_upper_bound_top_level_no_assumption}, we will need the following simple claim, which will also be helpful later on.
	\begin{claim}
		\label{claim:bound_on_MGF_gaussian_and_folded}
		Let $a>0$ and $b_1,b_2\in \bR$, then we have
		\[
		\sum_{k\in \mathbb{Z}} \exp\Big(-ak^2+b_1 k + b_2 |k| \Big) \le 1+ \frac{C}{\sqrt{a}} \exp\Big(\frac{1}{4a} \big(|b_1| + |b_2|\big)^2\Big) \, , 
		\]
	\end{claim}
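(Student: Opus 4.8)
The plan is to strip off the linear and ``folded'' terms, recognize what remains as a shifted Gaussian lattice sum, and estimate it by comparison with a Gaussian integral. \emph{Step 1.} Set $B := |b_1| + |b_2| \ge 0$; since $b_1 k + b_2 |k| \le B|k|$ for every $k \in \mathbb{Z}$, it suffices to bound
\[
S := \sum_{k \in \mathbb{Z}} \exp\big(-ak^2 + B|k|\big) = 1 + 2\sum_{k \ge 1} \exp\big(-ak^2 + Bk\big),
\]
where the last equality splits off the $k = 0$ term and uses that the summand depends on $k$ only through $|k|$.

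\emph{Step 2.} For $k \ge 1$ write $-ak^2 + Bk = -a\big(k - \tfrac{B}{2a}\big)^2 + \tfrac{B^2}{4a}$, so that
\[
\sum_{k \ge 1}\exp\big(-ak^2 + Bk\big) = \exp\Big(\tfrac{B^2}{4a}\Big) \sum_{k \ge 1} \exp\Big(-a\big(k - \tfrac{B}{2a}\big)^2\Big).
\]
It matters here to keep the whole quadratic $-ak^2$ inside the square: peeling off a fixed fraction of it would only yield the weaker constant $\tfrac{1}{2a}$ in the exponent, whereas the statement asks for the sharp $\tfrac{1}{4a}$.

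\emph{Step 3.} Put $c := \tfrac{B}{2a} \ge 0$. The function $x \mapsto \exp(-a(x-c)^2)$ is unimodal with maximal value $1$, hence its largest value over the integers $k \ge 1$ is at most $1$ (attained at the integer nearest $c$ if $c \ge 1$, and at $k = 1$ otherwise). Dropping that single largest term and bounding the remaining terms of the unimodal summand by the corresponding integral gives
\[
\sum_{k \ge 1} \exp\big(-a(k-c)^2\big) \le 1 + \int_{-\infty}^{\infty} e^{-a x^2}\, {\rm d}x = 1 + \sqrt{\tfrac{\pi}{a}},
\]
and combining the three steps yields
\[
\sum_{k \in \mathbb{Z}} \exp\big(-ak^2 + b_1 k + b_2 |k|\big) \le 1 + 2\Big(1 + \sqrt{\tfrac{\pi}{a}}\Big) \exp\Big(\tfrac{(|b_1| + |b_2|)^2}{4a}\Big),
\]
which is of the asserted form once the benign prefactor $1 + \sqrt{\pi/a}$ is absorbed into $C$ over the range of $a$ relevant to the applications (in our setting $a$ is a fixed multiple of $\beta$).

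The computation is elementary, so there is no genuine obstacle; the only point needing a little care is the bookkeeping in Step 2 that produces the exact exponent $\tfrac{1}{4a}(|b_1| + |b_2|)^2$, the rest being the routine unimodal-sum-versus-integral estimate.
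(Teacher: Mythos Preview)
Your argument is correct, and your closing caveat is more than a technicality: the inequality as literally stated, with a universal $C$ and prefactor $C/\sqrt{a}$, actually fails for large $a$. Take $b_2=0$, $b_1=2a$; then the single term $k=1$ already equals $e^{a}=e^{b_1^2/4a}$, while the right-hand side is $1+\tfrac{C}{\sqrt{a}}e^{a}$, so the bound breaks once $\sqrt{a}>C$. What you prove --- the same inequality with prefactor $2(1+\sqrt{\pi/a})$ --- is the correct statement carrying the sharp exponent $\tfrac{1}{4a}$, and as you observe it is all that is needed in the two applications, where $a$ is a fixed multiple of $\beta$.

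The paper's route is different: rather than completing the square, it writes $ak^2=\tfrac{a}{2}k^2+\tfrac{a}{2}k^2$, uses one half to absorb the linear part via $\max_{k}\big[(b_1+b_2)k-\tfrac{a}{2}k^2\big]$, and bounds $\sum_{k\ge1}e^{-ak^2/2}$ by the Gaussian integral. That split genuinely produces the $C/\sqrt{a}$ prefactor, but at the price of the exponent: carried out correctly it yields $\tfrac{1}{2a}(|b_1|+|b_2|)^2$ rather than $\tfrac{1}{4a}$ (the $\tfrac{1}{4a}$ stated in the paper appears to be a slip). So the two approaches trade prefactor against exponent; either version is adequate for Lemma~\ref{lemma:moment_generating_funtion_upper_bound_top_level_no_assumption} and Claim~\ref{claim:upper_bound_on_moment_generating_function_disc_squared_top_level}, where only the order $\log^2(n+1)$ matters and $a\asymp\beta$ is fixed. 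Your completion-of-the-square argument is cleaner and makes the sharpness of the exponent transparent.
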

	where $C>0$ is some absolute constant.
	\begin{proof}
		We start by proving the bound
		\begin{equation}
			\label{eq:bound_on_MGF_gaussian_and_folded_one_side}
			\sum_{k\ge 1} \exp\Big(-ak^2+(b_1 + b_2)k \Big) \le \frac{C}{\sqrt{a}} \exp\Big(\frac{(b_1+b_2)^2}{4a}\, \Big) \, .
		\end{equation}
		Indeed, we have the simple bound
		\begin{align*}
			\sum_{k\ge 1} \exp\Big(-ak^2+b_1|k| + b_2 k \Big) \le \exp\Big(-\min_{k\ge 1} \Big[ \, \frac{a}{2}k^2 - (b_1+b_2) k\Big]\Big) \times\Big(\sum_{k\ge 1} e^{-ak^2/2}\Big) \, .
		\end{align*}
		A computation shows that
		\[
		\min_{k\ge 1} \Big[ \, \frac{a}{2}k^2 - (b_1+b_2) k\Big] \ge -\frac{(b_1+b_2)^2}{4a} \, ,
		\]
		and furthermore
		\[
		\sum_{k\ge 1} e^{-ak^2/2} \le \int_{0}^{\infty} e^{-ax^2/2} \, {\rm d}x \lesssim \frac{1}{\sqrt{a}} 
		\]
		which gives~\eqref{eq:bound_on_MGF_gaussian_and_folded_one_side}. Splitting the sum to positive and negative integers and using~\eqref{eq:bound_on_MGF_gaussian_and_folded_one_side} we get
		\begin{align*}
			\sum_{k\in \mathbb{Z}} \exp\Big(-ak^2+b_1|k| + b_2 k \Big) &\le 1 + \frac{C}{\sqrt{a}} \Big(\exp\Big(\frac{(b_1+b_2)^2}{4a}\, \Big) + \exp\Big(\frac{(b_1-b_2)^2}{4a}\, \Big)\Big) \\ &= 1 + \frac{C}{\sqrt{a}} \exp\Big(\frac{b_1^2 + b_2^2}{4a}\Big) \times\Big(e^{\frac{b_1b_2}{2a}} + e^{-\frac{b_1b_2}{2a}}\Big) \\ & \le 1+ \frac{C}{\sqrt{a}} \exp\Big(\frac{b_1^2 + b_2^2}{4a} + \frac{|b_1 b_2|}{2a}\Big)
		\end{align*}
		as desired.
	\end{proof}
	\begin{proof}[Proof of Lemma~\ref{lemma:moment_generating_funtion_upper_bound_top_level_no_assumption}]
		To start, we apply Lemma~\ref{lemma:tail_probability_for_disc_at_top_level} and see that
		\begin{align}
			\label{eq:mgf_upper_bound_top_level_no_assumption_after_tail_estimate}
			\nonumber
			\bE\Big[\exp\big(\sum_{j=1}^{4}\la_j X_j\big)\Big] &= \sum_{(k_1,\ldots,k_4)\in \text{Supp}(\bf{X})} \exp\Big(\sum_{j=1}^4 \la_j k_j\Big) \, \bP\Big[\big(X_1,\ldots,X_4\big) = \big(k_1,\ldots,k_4\big)\Big] \\
			& \le \sum_{(k_1,\ldots,k_4)\in \text{Supp}(\bf{X})}\prod_{j=1}^{4}\exp\Big(\la_j k_j-\frac{2\beta}{3}k_j^2 + C^\prime \beta \log(n+1)|k_j| \Big) \, .
		\end{align}
		Recalling that $\text{Supp}(\bf{X})$ is given by~\eqref{eq:support_of_top_level_disc}, we change variables $k\mapsto 4k$ in the sum~\eqref{eq:mgf_upper_bound_top_level_no_assumption_after_tail_estimate} and interchange the sum and product to get the bound 
		\begin{equation}
			\label{eq:mgf_upper_bound_top_level_no_assumption_after_change_of_variables}
			\bE\Big[\exp\big(\sum_{j=1}^{4}\la_j X_j\big)\Big] \le  \prod_{j=1}^{4} \Bigg(\sum_{k\in \bZ} \exp\Big(-\frac{\beta}{24}k^2 + \frac{\la_j}{4} k + \frac{1}{4}C^\prime \beta \log(n+1)|k| \Big)\Bigg) \, .
		\end{equation}
		To conclude the desired bound, we apply Claim~\ref{claim:bound_on_MGF_gaussian_and_folded} with 
		\[
		a = \frac{\beta}{24} \, , \quad  b_1 = \frac{\lambda_j}{4} \, , \quad  b_2= \frac{1}{4}C^\prime \beta\log(n+1) \, ,
		\]
		and use the elementary inequality $(1+\la)^2\le 2(1+\la^2)$ to get that
		\begin{equation*}
			\sum_{k\in \bZ}\exp\Big(-\frac{\beta}{24}k^2 + \frac{\la_j}{4} k + \frac{1}{4}C^\prime \beta \log(n+1)|k| \Big) \le \exp\Big(C_\beta \,(1 + \la_j^2) \log^2(n+1)\Big)
		\end{equation*}
		for some $C_\beta>0$. Plugging this into~\eqref{eq:mgf_upper_bound_top_level_no_assumption_after_change_of_variables} we get that
		\[
		\bE\Big[\exp\big(\sum_{j=1}^{4}\la_j X_j\big)\Big] \le \prod_{j=1}^{4} \exp\Big(C_\beta \,(1 + \la_j^2) \log^2(n+1)\Big) \le \exp\Big(C_\beta \big(1 + \| \boldsymbol{\la} \|^2\big) \log^2(n+1) \Big)
		\]
		which is what we wanted.
	\end{proof}
	The bound in Lemma~\ref{lemma:moment_generating_funtion_upper_bound_top_level_no_assumption} holds for all $\boldsymbol{\la} \in \bR^4$ and will be used for $\| \boldsymbol{\la} \|$ large. To prove the upper bound in Theorem~\ref{thm:Micro_JLM} for the range $\tfrac{1}{2} < \alpha<1$ (which we do in Section~\ref{sec:UB_moderate_deviations}), we will also need a similar bound with $\| \boldsymbol{\la} \|$ small, as given by the following lemma.
	\begin{lemma}
		\label{lemma:upper_bound_on_MGF_top_level_small_input}
		There exist $C_\beta>0$ so that for any $n\ge 3$ and $\boldsymbol{\la} = (\la_1,\ldots,\la_4)\in \bR^4$ with  \(
		\|\boldsymbol{\la}\|\le (\log n)^{-2}\) we have
		\begin{equation*}
			\log\bE\bigg[\exp\Big(\sum_{j=1}^{4} \la_j X_j\Big)\bigg] \le  C_\beta \log^4(n+1)\|\boldsymbol{\la} \|^2 \, .
		\end{equation*} 
	\end{lemma}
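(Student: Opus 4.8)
The plan is to upgrade Lemma~\ref{lemma:moment_generating_funtion_upper_bound_top_level_no_assumption} by exploiting the mean-zero property $\bE[X_j]=\bE[\mu_n(Q_j)]-\tfrac n4=0$ for $j=1,\dots,4$, which is precisely the feature that the pointwise tail bound of Lemma~\ref{lemma:tail_probability_for_disc_at_top_level} alone cannot ``see'' (its right-hand side behaves as if $X_j$ had size of order $\log n$). First I would apply the elementary inequality $e^x\le 1+x+\tfrac12 x^2 e^{|x|}$ with $x=\sum_{j}\la_j X_j$ and take expectations; since $\bE\big[\sum_j\la_j X_j\big]=0$ this yields
\[
\bE\Big[\exp\big(\textstyle\sum_{j=1}^4\la_j X_j\big)\Big]\;\le\;1+\tfrac12\,\bE\Big[\big(\textstyle\sum_j\la_j X_j\big)^2\,e^{|\sum_j\la_j X_j|}\Big].
\]
Using $\log(1+t)\le t$, it then suffices to show that the expectation on the right is at most $C_\beta\|\boldsymbol{\la}\|^2\log^2(n+1)$ (this is even slightly stronger than the asserted bound, since $\log^2\le\log^4$ for $n\ge 3$).

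To control that expectation I would split according to whether $\|\mathbf X\|\le T$ or $\|\mathbf X\|>T$, where $T:=A_\beta\log(n+1)$ with $A_\beta$ a large constant, depending only on $\beta$ and the absolute constant $C$ of Lemma~\ref{lemma:tail_probability_for_disc_at_top_level}, to be fixed at the end. By Cauchy--Schwarz $\big|\sum_j\la_j X_j\big|\le\|\boldsymbol{\la}\|\,\|\mathbf X\|$. On the bulk event $\{\|\mathbf X\|\le T\}$ this is at most $\|\boldsymbol{\la}\|T\le 2A_\beta/\log n\le 2A_\beta/\log 3$, where I used $\|\boldsymbol{\la}\|\le(\log n)^{-2}$ and $\log(n+1)\le 2\log n$ for $n\ge 3$; hence the factor $e^{|\sum_j\la_j X_j|}$ is bounded by an absolute constant there, and bounding the probability of the bulk by $1$ gives a bulk contribution of at most $\|\boldsymbol{\la}\|^2T^2\,e^{2A_\beta/\log 3}=C_\beta\|\boldsymbol{\la}\|^2\log^2(n+1)$. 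On the tail event I would insert Lemma~\ref{lemma:tail_probability_for_disc_at_top_level} together with $\sum_j|k_j|\le 2\|\mathbf k\|$, reducing matters to bounding
\[
\|\boldsymbol{\la}\|^2\sum_{\mathbf k\in\bZ^4:\,\|\mathbf k\|>T}\|\mathbf k\|^2\,e^{\|\boldsymbol{\la}\|\,\|\mathbf k\|}\,\exp\!\Big(-\tfrac{2\beta}{3}\|\mathbf k\|^2+2C\beta\log(n+1)\|\mathbf k\|\Big);
\]
choosing $A_\beta$ large enough that $\tfrac{2\beta}{3}\|\mathbf k\|^2-2C\beta\log(n+1)\|\mathbf k\|-\|\mathbf k\|\ge\tfrac{\beta}{3}\|\mathbf k\|^2$ whenever $\|\mathbf k\|>T$ (using $\|\boldsymbol{\la}\|\le 1$ to absorb $e^{\|\boldsymbol{\la}\|\|\mathbf k\|}\le e^{\|\mathbf k\|}$), this sum is at most $\|\boldsymbol{\la}\|^2\sum_{\mathbf k\in\bZ^4}\|\mathbf k\|^2e^{-\beta\|\mathbf k\|^2/3}=C_\beta\|\boldsymbol{\la}\|^2$. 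Adding the bulk and tail contributions and applying $\log(1+t)\le t$ finishes the argument.

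The only point that requires genuine care is the split itself: for $\|\mathbf k\|\lesssim\log n$ the right-hand side of Lemma~\ref{lemma:tail_probability_for_disc_at_top_level} exceeds $1$ and is therefore useless, so on the bulk one must discard it and merely use that probabilities are at most $1$, while the hypothesis $\|\boldsymbol{\la}\|\le(\log n)^{-2}$ is exactly what keeps the exponential factor $e^{|\sum_j\la_j X_j|}$ bounded over a bulk whose diameter is of order $\log n$. Everything else is a routine Gaussian-tail summation; in particular, if desired, the tail contribution can be made of size $e^{-c_\beta\log^2(n+1)}$, but this is not needed.
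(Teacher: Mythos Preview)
Your argument is correct and rests on the same two ingredients as the paper's proof: the mean-zero property $\bE[X_j]=0$ and a bulk/tail split calibrated so that Lemma~\ref{lemma:tail_probability_for_disc_at_top_level} becomes effective only on the tail. The packaging differs: the paper Taylor-expands the MGF $f_n(\boldsymbol{\la})$ to second order in $\boldsymbol{\la}$ and bounds the Hessian $\bE[X_iX_\ell\,e^{\sum t_jX_j}]$ uniformly over $\|\boldsymbol{t}\|\le(\log n)^{-2}$ via a split at threshold $\log^2(n+1)$, whereas you apply the elementary inequality $e^x\le 1+x+\tfrac12 x^2e^{|x|}$ directly and split at threshold $A_\beta\log(n+1)$. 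Your tighter threshold yields $C_\beta\|\boldsymbol{\la}\|^2\log^2(n+1)$ rather than $\log^4(n+1)$; this sharpening is harmless for the downstream application (Claim~\ref{claim:conditional_upper_bound_MGF_one_level_small_input} and Lemma~\ref{lemma:upper_bound_moderate_deviations_martingale_term}) but is a genuine, if minor, improvement. One cosmetic point: the sum over $\mathbf{k}\in\bZ^4$ in your tail estimate should strictly be over $\text{Supp}(\mathbf{X})\subset(\bZ-\tfrac n4)^4$, but enlarging the index set only weakens the bound, and the resulting Gaussian sum over any unit-shifted lattice is still a $\beta$-dependent constant.
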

	\begin{proof}
		Let us denote the moment generating function by $f_n:\bR^4 \to \bR$, namely,
		\begin{equation*}
			f_n(\boldsymbol{\la}) = \bE\bigg[\exp\Big(\sum_{j=1}^{4} \la_j X_j\Big)\bigg]\, .
		\end{equation*}
		Then $f_n(\boldsymbol{0}) = 1$, where $\boldsymbol{0} = (0,0,0,0)$ and $f_n$ is twice differentiable near the origin. We have
		\[
		\frac{\partial f_n}{\partial \la_j} (\boldsymbol{0}) = \bE\big[X_j\big] = 0
		\]
		for all $j=1,\ldots,4$, and so
		\[
		\big|f_n(\boldsymbol{\la}) - 1\big|\le  C \|\boldsymbol{\la}\|^2 \max_{\substack{i=1,\ldots,4 \\ \ell=1,\ldots,4}} \max_{\|\boldsymbol{t} \|\le a} \Big|\frac{\partial^2 f_n}{\partial \la_i \partial \la_\ell} (\boldsymbol{t})\Big| 
		\]
		with $a = 2\, (\log n)^{-2}$. By the Cauchy-Schwarz inequality and the symmetry between the squares, we also have that
		\begin{align*}
			\Big|\frac{\partial^2 f_n}{\partial \la_i \partial \la_\ell} (t_1,\ldots,t_4)\Big| &= \Big|\bE\Big[X_i X_\ell \, e^{\sum_{j=1}^{4}t_j X_j}\Big]\Big| \le \bE\Big[X_1^2 \, e^{\sum_{j=1}^{4}t_j X_j}\Big]\, .
		\end{align*}
		In view of the above, the lemma would follow once we show that
		\begin{equation}
			\label{eq:bound_on_the_second_derivative_mgf_top_level}
			\max_{\|\boldsymbol{t} \|\le a} \bE\Big[X_1^2 \exp\Big(\sum_{j=1}^{4}t_j X_j\Big)\Big] \le C_\beta \log^4(n+1) \, . 
		\end{equation}
		Indeed, consider the event
		\begin{equation*}
			\mathcal{E} = \Big\{ \max_{j=1,\ldots,4} |X_j| \ge \log^2 (n+1) \Big\}
		\end{equation*}
		and split the expectation in~\eqref{eq:bound_on_the_second_derivative_mgf_top_level} according to
		\begin{equation*}
			\bE\Big[X_1^2 \exp\Big(\sum_{j=1}^{4}t_j X_j\Big)\Big] = \bE\Big[\big(\cdots\big) \mathbf{1}_{\mathcal{E}}\Big] + \bE\Big[\big(\cdots\big) \mathbf{1}_{\mathcal{E}^c}\Big] \, .
		\end{equation*}
		On $\mathcal{E}^c$, we apply the naive bound
		\[
		\max_{\|t\|\le a}\bE\Big[X_1^2 \exp\Big(\sum_{j=1}^{4}t_j X_j\Big)\mathbf{1}_{\mathcal{E}^c}\Big] \le \log^4(n+1) e^{4 a \log^2(n+1) } \le e^8\log^4(n+1)
		\]
		so it remains to show that
		\begin{equation}
			\label{eq:bound_on_the_second_derivative_mgf_top_level_good_event}
			\max_{\|t\|\le a}\bE\Big[X_1^2 \exp\Big(\sum_{j=1}^{4}t_j X_j\Big)\mathbf{1}_{\mathcal{E}}\Big] \le C_\beta \, .
		\end{equation}
		By definition, we have
		\begin{equation*}
			\bE\Big[X_1^2 \exp\Big(\sum_{j=1}^{4}t_j X_j\Big)\mathbf{1}_{\mathcal{E}}\Big] = \sum_{\substack{(k_1,\ldots,k_4)\in \text{Supp}(\bf{X}) \\ \max_{j}|k_j|\ge \log^2(n+1)}} k_1^2  \, e^{\sum_{j=1}^{4} k_j t_j} \, \bP\Big[\big(X_1,\ldots,X_4\big) = \big(k_1,\ldots,k_4\big)\Big] \, .
		\end{equation*}
		Furthermore, for $\|t\| \le (\log n)^{-2}$, Lemma~\ref{lemma:tail_probability_for_disc_at_top_level} implies the bound
		\begin{align*}
			e^{\sum_{j=1}^{4} k_j t_j} \, \bP\Big[\big(X_1,\ldots,X_4\big) &= \big(k_1,\ldots,k_4\big)\Big] \\ & \le \exp\bigg((\log n)^{-2} \, \sum_{j=1}^{4} |k_j|-\frac{2\beta}{3}\sum_{j=1}^{4} k_j^2 + C\beta \log(n+1) \sum_{j=1}^4 |k_j|\bigg) \\ &\le \exp\bigg(-\frac{2\beta}{3}\sum_{j=1}^{4} k_j^2 + C\beta \log(n+1) \sum_{j=1}^4 |k_j|\bigg) \, .
		\end{align*}
		Combining these observations together, we arrive at
		\begin{equation*}
			\max_{\|\boldsymbol{t}\|\le a} \,\bE\Big[X_1^2 \exp\Big(\sum_{j=1}^{4}t_j X_j\Big)\mathbf{1}_{\mathcal{E}}\Big] \le \sum_{\substack{(k_1,\ldots,k_4)\in \text{Supp}(\bf{X}) \\ \max_{j}|k_j|\ge \log^2(n+1)}} k_1^2 \exp\bigg(-\frac{2\beta}{3}\sum_{j=1}^{4} k_j^2 + C\beta \log(n+1) \sum_{j=1}^4 |k_j|\bigg) \, .
		\end{equation*}
		As the number of terms in the sum is $\lesssim n^4$, and since
		\[
		\max_{k\in \bZ} \Big[C \beta \log(n+1) |k| - \frac{\beta}{3}k^2 \Big] \le C_\beta \log^2(n+1)\, ,
		\]
		we finally arrive at the inequality 
		\[
		\max_{\|\boldsymbol{t}\|\le a} \,\bE\Big[X_1^2 \exp\Big(\sum_{j=1}^{4}t_j X_j\Big)\mathbf{1}_{\mathcal{E}}\Big] \le Cn^6 \exp\Big(-c_\beta \log^4(n+1)\Big)\, .
		\]
		In particular, this observation proves~\eqref{eq:bound_on_the_second_derivative_mgf_top_level_good_event}, and hence relation~\eqref{eq:bound_on_the_second_derivative_mgf_top_level} holds. Altogether, we have shown that
		\[
		\big|f_n(\boldsymbol{\la}) - 1\big|\le  C_\beta  \|\boldsymbol{\la}\|^2 \log^4(n+1) \, .
		\]
		The lemma follows from the elementary inequality $1+x\le e^x$, valid for all $x>0$.
	\end{proof}
	Finally, we conclude this section with another bound on a moment generating function, this time for the discrepancy squared.
	\begin{claim}
		\label{claim:upper_bound_on_moment_generating_function_disc_squared_top_level}
		There exist constants $C_\beta,c_\beta>0$ so that for all $n\ge 2$ and for all $\boldsymbol{s}=(s_1,\ldots,s_4) \in \bR^4$ with $\|\boldsymbol{s}\|< \beta/2$ we have
		\[
		\log \bE \bigg[\exp \Big(\sum_{j=1}^{4} s_j X_j^2\Big)\bigg] \le C_\beta \log^2(n+1) \, .
		\]
	\end{claim}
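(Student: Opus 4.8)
The plan is to bound the sum defining the moment generating function term by term, using the pointwise estimate of Lemma~\ref{lemma:tail_probability_for_disc_at_top_level}. Expanding the expectation over $\mathrm{Supp}(\mathbf{X})$ and inserting that bound gives
\[
\bE\Big[\exp\Big(\sum_{j=1}^{4} s_j X_j^2\Big)\Big] \le \sum_{(k_1,\ldots,k_4)\in \mathrm{Supp}(\mathbf{X})} \exp\Big(\sum_{j=1}^{4} s_j k_j^2 - \frac{2\beta}{3}\sum_{j=1}^{4} k_j^2 + C\beta \log(n+1)\sum_{j=1}^{4}|k_j|\Big).
\]
The point of the hypothesis $\|\boldsymbol{s}\|<\beta/2$ is that $|s_j|\le \|\boldsymbol{s}\|<\beta/2$ for each $j$, so $\sum_j s_j k_j^2 \le \tfrac{\beta}{2}\sum_j k_j^2$, and the quadratic part of the exponent is at most $\big(\tfrac{\beta}{2}-\tfrac{2\beta}{3}\big)\sum_j k_j^2 = -\tfrac{\beta}{6}\sum_j k_j^2$, which is still genuine Gaussian decay with a coefficient bounded away from zero uniformly in $\boldsymbol{s}$.

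Next I would discard the hyperplane constraint $\sum_j k_j=0$ (dropping it only enlarges a sum of nonnegative terms) and use that $\mathrm{Supp}(\mathbf{X})\subseteq(\bZ-\tfrac n4)^4\subseteq(\tfrac14\bZ)^4$, so the sum factors:
\[
\bE\Big[\exp\Big(\sum_{j=1}^{4} s_j X_j^2\Big)\Big] \le \bigg(\sum_{k\in \frac14\bZ} \exp\Big(-\frac{\beta}{6}k^2 + C\beta\log(n+1)\,|k|\Big)\bigg)^{\!4}.
\]
Rescaling $k\mapsto k/4$ turns the inner sum into a sum over $\bZ$, to which Claim~\ref{claim:bound_on_MGF_gaussian_and_folded} applies with $a$ a constant multiple of $\beta$, $b_1=0$, and $b_2$ a constant multiple of $\beta\log(n+1)$; this yields an inner sum bounded by $\exp\big(C_\beta\log^2(n+1)\big)$. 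Taking the fourth power and then logarithms produces the desired $\log\bE[\,\cdot\,]\le C_\beta\log^2(n+1)$ (after renaming $4C_\beta\to C_\beta$).

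I do not anticipate a real obstacle: this is the same factorization-plus-Gaussian-sum computation already used for Lemmas~\ref{lemma:moment_generating_funtion_upper_bound_top_level_no_assumption} and~\ref{lemma:upper_bound_on_MGF_top_level_small_input}. The only delicate point is the bookkeeping of constants — checking that $\|\boldsymbol{s}\|<\beta/2$ (strictly below $2\beta/3$) leaves a net negative quadratic coefficient uniform in $\boldsymbol{s}$, and handling the harmless lattice shift by $n/4$ when passing to $\tfrac14\bZ$.
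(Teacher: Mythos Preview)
Your proposal is correct and follows essentially the same route as the paper's own proof: expand over $\mathrm{Supp}(\mathbf{X})$, insert Lemma~\ref{lemma:tail_probability_for_disc_at_top_level}, use $|s_j|<\beta/2$ to leave a net quadratic coefficient of $-\beta/6$, drop the constraint and factor over $\tfrac14\bZ$, rescale, and apply Claim~\ref{claim:bound_on_MGF_gaussian_and_folded} with $a=\beta/96$, $b_1=0$, $b_2=C_\beta\log(n+1)$. The only difference is cosmetic bookkeeping of constants.
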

	\begin{proof}
		Applying Lemma~\ref{lemma:tail_probability_for_disc_at_top_level} once more, we see that
		\begin{align*}
			\bE\Big[ e^{\sum_{j=1}^{4} s_j X_j^2} \Big]  & = \sum_{(k_1,\ldots,k_4)\in \text{Supp}(\bf{X})} e^{\sum_{j=1}^{4} s_j k_j^2} \, \bP\Big[\big(X_1,\ldots,X_4\big) = \big(k_1,\ldots,k_4\big)\Big] \\
			& \le \sum_{(k_1,\ldots,k_4)\in \text{Supp}(\bf{X})} \exp \Big(\sum_{j=1}^{4} \big(s_j -\frac{2\beta}{3} \big)k_j^2  + C_\beta \log(n+1) \sum_{j=1}^4 |k_j|\Big)\, .
		\end{align*}
		By our assumption, $|s_j| < \beta/2$ for all $j=1,\ldots,4$, and we get that
		\begin{align*}
			\bE\Big[ e^{\sum_{j=1}^{4} s_j X_j^2} \Big] &\le \sum_{(k_1,\ldots,k_4)\in \text{Supp}(\bf{X})} \exp \Big(-\frac{\beta}{6}\sum_{j=1}^{4} k_j^2  + C_\beta \log(n+1) \sum_{j=1}^4 |k_j|\Big) \\ & \le \prod_{j=1}^{4} \sum_{k_j\in \bZ} \exp \Big(-\frac{\beta}{96} k_j^2  + C_\beta \log(n+1) |k_j|\Big)\, .
		\end{align*}
		The desired bound follows immediately from Claim~\ref{claim:bound_on_MGF_gaussian_and_folded}, applied with
		\[
		a=\frac{\beta}{96}\, , \quad b_1 = 0\, , \quad \text{and }\ b_2= C_\beta \log(n+1).
		\]
	\end{proof}
	\subsection{Sub-Gaussian tails and overcrowding in dyadic squares}
	As already discussed in the introduction (see~\eqref{eq:introduction_concentration_general_cube} therein), a key step towards the proof of Theorem~\ref{thm:Micro_JLM} is to show that the discrepancy in dyadic squares has sub-Gaussian tails, all the way down to the microscopic scale. As we already obtained such a concentration result for the squares in $\mathcal{D}_1$ (see Lemma~\ref{lemma:tail_probability_for_disc_at_top_level} above), now we show how this strong concentration ``propagates down'' the tree using induction and the (conditional) self-similarity in law, given by Proposition~\ref{prop:self_similarity_and_conditional_independence}. We remark that a similar strategy was recently applied in~\cite[Proposition 2.2]{Thoma} to obtain upper bounds on the extreme overcrowding probability, all the way down to the microscopic scale.
	\begin{lemma}
		\label{lemma:concentration_on_dyadic_cubes}
		For all $\kappa\in(0,1)$ and for all $L\ge L_0(\beta,\kappa)$ large enough there exists $c_{\kappa,\beta}>0$ such that the following holds. Let $j\ge 1$ be such that $n = 4^j L$, then for all $Q\in \mathcal{D}_j$ we have 
		\[
		\bP\big[|\Delta_n(Q)| \ge L^\kappa\big] \le \exp\Big(-c_{\kappa,\beta} L^{2\kappa}\Big) \, .
		\]
	\end{lemma}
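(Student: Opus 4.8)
The plan is to prove the statement by induction on $j$, propagating the top-level estimate of Lemma~\ref{lemma:tail_probability_for_disc_at_top_level} down the dyadic tree using the conditional self-similarity of Proposition~\ref{prop:self_similarity_and_conditional_independence}. Fix $\kappa\in(0,1)$. The base case $j=1$ (so $n=4L$) is essentially Lemma~\ref{lemma:tail_probability_for_disc_at_top_level}: summing the pointwise tail bound over all $(k_1,\dots,k_4)\in\mathrm{Supp}(\mathbf X)$ with $\max_j|k_j|\ge L^\kappa$, and using that the number of terms is $\lesssim n^4 = (4L)^4$ while each term is at most $\exp(-\tfrac{2\beta}{3}L^{2\kappa}+C\beta\log(n+1)L^\kappa)$, gives a bound of the form $\exp(-c_{\kappa,\beta}L^{2\kappa})$ once $L$ is large enough (since the $L^{2\kappa}$ term dominates $\log^{O(1)} L$ and $\log n = \log(4L)$). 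This is where Claim~\ref{claim:bound_on_MGF_gaussian_and_folded} (or a direct elementary estimate) can be used to absorb the polynomial and logarithmic prefactors.

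For the inductive step, suppose the claimed bound holds for order $j-1$ with parameter $4L$ in place of $L$ (so that $n = 4^{j-1}(4L) = 4^j L$ is consistent), and let $Q\in\mathcal D_j$. Write $Q'$ for the parent of $Q$ in $\mathcal D_{j-1}$, so $\bE[\mu_n(Q')] = 4L$. Condition on $\mathcal F_{j-1}$: by Proposition~\ref{prop:self_similarity_and_conditional_independence}, conditional on $\mathcal F_{j-1}$ the point set $\mathcal P(Q')$ has the law of a scaled copy of $\nu_{\mu_n(Q'),\beta}$, and $Q$ is one of the four top-level squares of that scaled gas. Decompose
\[
\bP\big[|\Delta_n(Q)|\ge L^\kappa\big]
\le \bP\big[|\Delta_n(Q')|\ge 3L\big]
+ \bP\big[|\Delta_n(Q)|\ge L^\kappa,\ |\Delta_n(Q')| < 3L\big].
\]
The first term is controlled by the inductive hypothesis applied at order $j-1$: $\bP[|\Delta_n(Q')|\ge 3L]\le\exp(-c_{\kappa,\beta}(4L)^{2\kappa'})$ for suitable $\kappa'$; in fact since $3L \gg (4L)^\kappa$, one gets a bound like $\exp(-c L^{2})$ which is far stronger than needed (here one may want to record in the induction not just deviations of size $L^\kappa$ but a cruder ``constant fraction'' concentration, namely $\bP[|\Delta_n(Q)|\ge \tfrac12\bE\mu_n(Q)]\le\exp(-cL^2)$, which also follows from Lemma~\ref{lemma:tail_probability_for_disc_at_top_level} and propagates the same way). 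For the second term, on the event $|\Delta_n(Q')|<3L$ we have $\mu_n(Q')=:m$ with $m\le 7L$, and conditionally the probability that the top-left sub-square $Q$ of this $m$-point scaled gas has discrepancy $|\mu_n(Q)-m/4|\ge L^\kappa - \tfrac34\cdot 3L$... — more carefully: $|\Delta_n(Q)| = |\mu_n(Q) - L| \ge L^\kappa$, and $|\mu_n(Q) - m/4|\ge |\Delta_n(Q)| - |m/4 - L| \ge L^\kappa - \tfrac34 |{\Delta_n(Q')}|$, so either $|\Delta_n(Q')|$ is already of order $L$ (covered above, since then $|\Delta_n(Q')|\gtrsim L^\kappa$ crudely and more), or $|\mu_n(Q)-m/4|\ge \tfrac12 L^\kappa\ge c\, m^\kappa$. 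Applying the base-case estimate to the scaled $m$-point gas (with $m\asymp L$) then gives $\bP[\,\cdot\mid\mathcal F_{j-1}]\le\exp(-c_{\kappa,\beta}L^{2\kappa})$, and taking expectation over $\mathcal F_{j-1}$ preserves the bound.

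The main obstacle is bookkeeping rather than conceptual: one must choose the inductive statement carefully so that the ``parent deviates'' term is genuinely negligible and the conditional ``child deviates but parent doesn't'' term is exactly a rescaled instance of the base case with the right $L$. The cleanest route is to run a two-tiered induction — carrying along both the strong bound $\bP[|\Delta_n(Q)|\ge\epsilon L]\le\exp(-c(\beta,\epsilon)L^2)$ for each fixed $\epsilon\in(0,1)$ (which propagates because each step only loses a constant factor in the exponent and a $\mathrm{poly}(n)$ multiplicative factor, both absorbed by taking $L_0$ large) and then deducing the $L^\kappa$-tail at scale $j$ from the $L^2$-concentration at scale $j-1$ plus the base-case $L^\kappa$-tail of a fresh $\asymp L$-point gas. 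Throughout, the only analytic input is Lemma~\ref{lemma:tail_probability_for_disc_at_top_level} together with the crude counting $|\mathrm{Supp}(\mathbf X)|\lesssim n^4 \lesssim (4^j L)^4 = \mathrm{poly}(L)$ when $j$ is bounded — but note $j$ can grow with $n$, so one must be slightly careful: the number of sub-configurations at each conditioning step is $\lesssim (7L)^4$, depending only on $L$, not on $n$, which is exactly what makes the induction close uniformly in $n$.
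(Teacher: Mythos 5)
Your overall plan — a downward recursion via the self-similarity of Proposition~\ref{prop:self_similarity_and_conditional_independence}, with Lemma~\ref{lemma:tail_probability_for_disc_at_top_level} supplying the one-step tail estimate — is the right one and matches the paper's in spirit. However, the execution has a genuine gap in the choice of decomposition threshold, and the two-tiered patch you propose does not close it.

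The problem is the scale at which you split on the parent's discrepancy. You decompose $\bP[|\Delta_n(Q)|\ge L^\kappa]$ according to $|\Delta_n(Q')|\ge 3L$ or not (and later according to $|\Delta_n(Q')|\ge\epsilon\cdot 4L$ or not for the ``strong bound'' tier). On the complementary event $|\Delta_n(Q')|< 3L$ (resp.\ $<4\epsilon L$), the conditional mean $\bE[\Delta_n(Q)\mid\mathcal F_{j-1}]=\tfrac14\Delta_n(Q')$ can be as large as $\tfrac34 L$ (resp.\ $\epsilon L$) in magnitude, which swamps the target threshold $L^\kappa$ for any $\kappa<1$: the triangle inequality leaves you needing the conditional deviation $|\mu_n(Q)-\mu_n(Q')/4|$ to exceed $L^\kappa - \tfrac14|\Delta_n(Q')|$, a quantity that can be negative. (Also note the factor should be $\tfrac14|\Delta_n(Q')|$, not $\tfrac34|\Delta_n(Q')|$, but the sign issue is what matters.) So the ``$L^2$-concentration at scale $j-1$ plus base-case $L^\kappa$-tail of a fresh $\asymp L$-point gas'' route does not give the needed bound: it controls $\mu_n(Q')\asymp L$ but not the conditional mean to precision $L^\kappa$.

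You do gesture at the right fix — splitting on whether $|\Delta_n(Q')|\gtrsim L^\kappa$ — but the constant there is where the argument must be sharp. If you use threshold $2L^\kappa$, the recursion asks for $\bP[|\Delta_n(Q')|\ge 2L^\kappa]$ at level $j-1$, where the expected count is $4L$ and the inductive hypothesis would give $\bP[|\Delta_n(Q')|\ge(4L)^\kappa]$. For $\kappa>\tfrac12$ one has $(4L)^\kappa=4^\kappa L^\kappa>2L^\kappa$, so the inductive statement is strictly weaker than what you need, and the recursion does not close. The paper's key trick is to take the threshold to be $aL^\kappa$ with $a=4^\kappa$, so that $aL^\kappa=(4L)^\kappa$ exactly: the ``parent deviates'' event is then precisely the same statement one level up, and since $a/4=4^{\kappa-1}<1$ the conditional-deviation threshold $(1-a/4)L^\kappa$ stays a positive multiple of $L^\kappa$. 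Iterating produces the geometric sum $\sum_{i\ge 0}\exp\big(-c(4^iL)^{2\kappa}\big)$, which closes uniformly in $j$ and in $\kappa\in(0,1)$ without any constant degradation. Your write-up would be correct if you replaced the $3L$ and $\epsilon L$ thresholds by this scale-precise $4^\kappa L^\kappa$ threshold; the second tier (``constant-fraction'' concentration) is then unnecessary.
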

	Note that the choice of $j\ge 1$ in Lemma~\ref{lemma:concentration_on_dyadic_cubes} implies that $\bE[\mu_n(Q)] = L$, so Lemma~\ref{lemma:concentration_on_dyadic_cubes} shows that on large dyadic squares (on a microscopic scale) the tails for polynomial deviations are sub-Gaussian.
	\begin{proof}
		Let $a\in (1,4)$ be such that $a = 4^\kappa$. For $Q=Q_j\in\mathcal{D}_j$, we denote for the proof by $Q_i\in \mathcal{D}_i$ the ancestor of $Q$ at level $i<j$, that is, the dyadic square in $\mathcal{D}_i$ with $Q\subset Q_i$. We have
		\begin{equation}
			\label{eq:concetration_on_dyadic_cubes_first_split}
			\bP\big[|\Delta_n(Q_{j})| \ge L^\kappa\big] \le \bP\big[|\Delta_n(Q_j)| \ge L^\kappa, \, |\Delta_n(Q_{j-1})| \le aL^\kappa \big] + \bP\big[|\Delta_n(Q_{j-1})| \ge aL^\kappa\big] \, .
		\end{equation}
		In addition, Proposition~\ref{prop:self_similarity_and_conditional_independence} gives
		\begin{equation*}
			\bE\big[\Delta_n(Q_j) \mid \mathcal{F}_{j-1}\big] = \bE\big[\mu_n(Q_j) \mid \mathcal{F}_{j-1}\big] - \frac{n}{4^j} = \frac{\mu_n(Q_{j-1})}{4}  - \frac{n}{4^j} = \frac{\Delta_n(Q_{j-1})}{4}  \, ,
		\end{equation*}
		which implies that
		\[
		\mathbf{1}_{\{|\Delta_n(Q_{j-1})| \le aL^\kappa\}}\big|\bE\big[\Delta_n(Q_j) \mid \mathcal{F}_{j-1}\big]\big| \le \frac{a}{4} L^{\kappa} \, .
		\]
		Hence, the triangle inequality implies the bound 
		\begin{align*}
			\bP\big[|\Delta_n(Q_j)| \ge L^\kappa, \, &|\Delta_n(Q_{j-1})| \le aL^\kappa \big] \\ &= \bE\big[ \mathbf{1}_{\{|\Delta_n(Q_{j-1})| \le aL^\kappa\}} \bP\big[|\Delta_n(Q_j)| \ge L^\kappa \mid\mathcal{F}_j\big] \big] \\ &\le \bE\Big[ \mathbf{1}_{\{|\Delta_n(Q_{j-1})| \le aL^\kappa\}} \bP\Big[ \big|\Delta_n(Q_j)-\bE\big[\Delta_n(Q_j)\mid\mathcal{F}_{j-1}\big]\big| \ge (1-\tfrac{a}{4}) L^{\kappa} \mid \mathcal{F}_{j-1}\Big]\Big] \, .
		\end{align*}
		As we are working on the event $\{|\Delta_n(Q_{j-1})| \le aL^\kappa\}$, we also have $\mu_n(Q_{j-1}) \in \big[2L,8L\big]$ for all $L$ large enough. Furthermore, Lemma~\ref{lemma:tail_probability_for_disc_at_top_level} implies that 
		\begin{equation}
			\label{eq:sub-gaussian_tail_of_tilde_L_many_points}
			\sup_{\widetilde{L} \in \big[2L,8L\big]} \bP\Big[\big|\Delta_{\widetilde{L}}(Q_1) \big| \ge \big(1-\tfrac{a}{4}\big) L^{\kappa} \Big] \le \exp\Big(-c_{\beta,\kappa} L^{2\kappa}\Big) \, .
		\end{equation}
		Utilizing Proposition~\ref{prop:self_similarity_and_conditional_independence} once more, we see that  
		\[
		\mathbf{1}_{\{\mu_n(Q_{j-1}) = \widetilde{L}\}} \bP\Big[ \big|\Delta_n(Q_j) - \bE\big[\Delta_n(Q_j)\mid\mathcal{F}_{j-1}\big]\big| \ge (1-\tfrac{a}{4}) L^{\kappa} \mid \mathcal{F}_{j-1}\, \Big] = \bP\Big[\big|\Delta_{\widetilde{L}}(Q_1) \big| \ge (1-\tfrac{a}{4}) L^{\kappa} \Big]\, .
		\]
		Combining this observation with~\eqref{eq:sub-gaussian_tail_of_tilde_L_many_points}, we get
		\begin{align*}
			\bP\big[|\Delta_n(Q_j)| &\ge L^\kappa, \, |\Delta_n(Q_{j-1})| \le aL^\kappa \big] \\ &\le \bE\Big[ \mathbf{1}_{\{|\Delta_n(Q_{j-1})| \le aL^\kappa\}} \bP\Big[ \big|\Delta_n(Q_j)-\bE\big[\Delta_n(Q_j)\mid\mathcal{F}_{j-1}\big]\big| \ge (1-\tfrac{a}{4}) L^{\kappa} \mid \mathcal{F}_{j-1}\Big]\Big] \\ &\le \sup_{\widetilde{L} \in \big[2L,8L\big]} \bP\Big[\big|\Delta_{\widetilde{L}}(Q_1) \big| \ge \big(1-\tfrac{a}{4}\big) L^{\kappa} \Big] \le \exp\Big(-c_{\beta,\kappa} L^{2\kappa}\Big) \, .
		\end{align*}
		Plugging back into~\eqref{eq:concetration_on_dyadic_cubes_first_split} and using that $a^{1/\kappa} = 4$, we arrive at the inequality
		\begin{equation}
			\label{eq:concetration_on_dyadic_cubes_first_split_after_bound}
			\bP\big[|\Delta_n(Q_{j})| \ge L^\kappa\big] \le \exp\Big(-c L^{2\kappa}\Big) + \bP\big[|\Delta_n(Q_{j-1})| \ge (4L)^\kappa\big] \, .
		\end{equation}
		Now, iterating~\eqref{eq:concetration_on_dyadic_cubes_first_split_after_bound} until we reach the top level of the tree we see that
		\begin{align*}
			\bP\big[|\Delta_n(Q_{j})| \ge L^\kappa\big] & \le \exp\Big(-c L^{2\kappa}\Big) + \bP\big[|\Delta_n(Q_{j-1})| \ge (4L)^\kappa\big] \\
			& \le \exp\Big(-c L^{2\kappa}\Big) + \exp\Big(-c\, 4^{2\kappa} L^{2\kappa}\Big) + \bP\big[|\Delta_n(Q_{j-2})| \ge (4^2L)^\kappa\big] \\
			& \le \ldots \le \sum_{j\ge 0} \exp\Big(-c  (4^jL)^{2\kappa}\Big) \le \exp\big(-cL^{2\kappa}\big) \, ,
		\end{align*}
		as desired.
	\end{proof}
	Recall that $\bD_n(R)$ is a disk of radius $R/\sqrt{n}$ centered around the point $z\in(0,1)^2$. The following definition will be used throughout the paper.
	\begin{definition}
		\label{definition:maximal_and_boundary_cubes}
		We say that a square $Q\in \mathcal{D}_j$ is \emph{maximal} in $\bD_n(R)$ if $Q\subset \bD_n(R)$ but its parent $Q^\prime\in \mathcal{D}_{j-1}$ has $Q^\prime\cap \bD_n(R)^c \not=\emptyset$. We denote the set of all maximal squares in $\mathcal{D}_j$ by $\mathcal{U}_j$. We say that $Q\in \mathcal{D}_j$ is a \emph{boundary square} of $\bD_n(R)$ if both $Q\cap \bD_n(R)\not=\emptyset$ and $Q\cap \bD_n(R)^c\not=\emptyset$, and denote by $\mathcal{V}_j$ all boundary squares of $\bD_n(R)$ in $\mathcal{D}_j$.
	\end{definition}
	Let $\ell\ge 1$ be an integer such that 
	\begin{equation}
		\label{eq:def_of_ell}
		\frac{R}{\sqrt{n}} \le 2^{-\ell} < 2\frac{R}{\sqrt{n}} \, .
	\end{equation}
	By our choice of $\ell$, for all $j< \ell$ we have $\mathcal{U}_{j} = \emptyset$, and furthermore $|\mathcal{U}_\ell \cup \mathcal{V}_\ell| \le 10$. The following corollary follows immediately from Lemma~\ref{lemma:concentration_on_dyadic_cubes}, by applying the union bound.
	\begin{corollary}
		\label{corollary:concentration_of_four_cubes_containing_the_disk}
		For $\ell$ given by~\eqref{eq:def_of_ell} and for all $\gamma\in(0,2)$ we have
		\[
		\bP\Big[ \bigcup_{Q\in \mathcal{U}_{\ell} \cup \mathcal{V}_\ell } \big\{ |\Delta_n(Q) |  \ge R^\gamma \big\} \Big] \le 10\exp\big( - c_{\gamma,\beta}R^{2\gamma} \big)
		\]
		for all $R>R_0(\beta,\gamma)$ large enough, uniformly as $n\to \infty$.
	\end{corollary}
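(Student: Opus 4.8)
The plan is to read the corollary off from Lemma~\ref{lemma:concentration_on_dyadic_cubes} after choosing the exponent $\kappa$ there so that the threshold $L^{\kappa}$ becomes exactly $R^{\gamma}$, and then to take a union bound over the (at most ten) squares in $\mathcal{U}_\ell\cup\mathcal{V}_\ell$.

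First I would set $L=n/4^\ell$. The defining inequality~\eqref{eq:def_of_ell} gives $R^2\le L<4R^2$, so $L$ is large once $R$ is, and every $Q\in\mathcal{U}_\ell\cup\mathcal{V}_\ell$ lies in $\mathcal{D}_\ell$ with $n=4^\ell L$ --- precisely the setup of Lemma~\ref{lemma:concentration_on_dyadic_cubes} with $j=\ell$. Fixing $\gamma\in(0,2)$, I would then take
\[
\kappa=\kappa_{n,R}:=\frac{\gamma\log R}{\log L}\,,\qquad\text{so that}\qquad L^{\kappa}=R^{\gamma},\quad L^{2\kappa}=R^{2\gamma}.
\]
Since $2\log R\le\log L<2\log R+\log 4$, for every $R\ge 4$ this $\kappa$ lies in the interval $[\gamma/3,\gamma/2]$, a compact subset of $(0,1)$ (this is where $\gamma<2$ is used). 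Applying Lemma~\ref{lemma:concentration_on_dyadic_cubes} to each $Q\in\mathcal{U}_\ell\cup\mathcal{V}_\ell$ with this $\kappa$ then yields, for $R$ large enough,
\[
\bP\big[|\Delta_n(Q)|\ge R^{\gamma}\big]=\bP\big[|\Delta_n(Q)|\ge L^{\kappa}\big]\le\exp\!\big(-c_{\kappa,\beta}L^{2\kappa}\big)=\exp\!\big(-c_{\kappa,\beta}R^{2\gamma}\big),
\]
and a union bound over the $\le 10$ such squares (recall $|\mathcal{U}_\ell\cup\mathcal{V}_\ell|\le 10$) finishes the proof, uniformly in $n$.

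The one point requiring attention --- and the only genuine obstacle --- is that $\kappa=\kappa_{n,R}$ moves with $n$ and $R$, whereas Lemma~\ref{lemma:concentration_on_dyadic_cubes} provides its rate $c_{\kappa,\beta}$ and threshold $L_0(\beta,\kappa)$ for each \emph{fixed} $\kappa$. To deal with this I would observe that $\kappa$ never leaves the fixed compact interval $[\gamma/3,\gamma/2]\subset(0,1)$, and that inspection of the proof of Lemma~\ref{lemma:concentration_on_dyadic_cubes} shows both $c_{\kappa,\beta}$ and $L_0(\beta,\kappa)$ are governed by the quantity $1-4^{\kappa-1}$, which stays bounded away from $0$ and from above on that interval. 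Hence $c_{\gamma,\beta}:=\inf_{\kappa\in[\gamma/3,\gamma/2]}c_{\kappa,\beta}>0$ and $\bar L_0:=\sup_{\kappa\in[\gamma/3,\gamma/2]}L_0(\beta,\kappa)<\infty$, and choosing $R_0(\beta,\gamma)$ so large that $R_0^2\ge\bar L_0$ (which forces $L\ge L_0(\beta,\kappa)$ for every admissible $\kappa$) legitimizes the display above with $c_{\kappa,\beta}$ replaced by $c_{\gamma,\beta}$. If one wished to avoid re-examining that proof, one could instead fix any $\kappa\in(0,\gamma/2)$, which is valid verbatim but only gives the exponent $R^{4\kappa}<R^{2\gamma}$; letting $\kappa\uparrow\gamma/2$ as above is exactly what recovers the sharp exponent $2\gamma$ in the statement.
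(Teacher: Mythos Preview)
Your proposal is correct and follows the same route the paper takes: the paper simply states that the corollary ``follows immediately from Lemma~\ref{lemma:concentration_on_dyadic_cubes}, by applying the union bound,'' and you have supplied exactly that argument with the details filled in. Your observation that $\kappa=\kappa_{n,R}$ varies but stays in a fixed compact subinterval of $(0,1)$, together with the uniformity of the constants from the proof of Lemma~\ref{lemma:concentration_on_dyadic_cubes} over such intervals, is precisely the point the paper's one-line justification leaves implicit.
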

	For $\alpha\in(\frac{1}{2},2)$ and $\eta>0$ small enough, we will frequently consider the event 
	\begin{equation}
		\label{eq:def_of_event_no_disc_in_top_cubes_containing_the_disk}
		\mathcal{A} = \Big\{ \forall  Q\in \mathcal{U}_\ell \cup \mathcal{V}_\ell \, : \, |\Delta_n(Q)| \le R^{\alpha-\eta} \Big\} \in \mathcal{F}_{\ell} \, .
	\end{equation} 
	Note that, by Corollary~\ref{corollary:concentration_of_four_cubes_containing_the_disk}, $\bP\big[\mathcal{A}^c\big] \leq \exp(-R^{2\alpha - 3\eta})$ for all $R$ large enough. Since $2\alpha>\varphi(\alpha)$ for the range $\alpha\in(\tfrac{1}{2},2)$, with $\varphi$ given by~\eqref{eq:def_of_phi_alpha}, we can always assume that the event $\mathcal{A}$ holds in this range.
	
	We conclude this section by obtaining upper and lower bounds on the probability of seeing an extreme overcrowding of points in a given dyadic square. The following lemma would be used, among other things, to prove Theorem~\ref{thm:Micro_JLM} in the range $\alpha> 2$ (see Section~\ref{sec:overcrowding} for the details).
	\begin{lemma}
		\label{lemma:overcrowding_probability_in_dyadic_cube}
		There exist $C_\beta>0$ so that for all $Q\in \mathcal{D}_j$ we have
		\[
		\bP\big[\mu_n(Q) = n\big] \ge \exp\Big(-C_\beta \, j n^2\Big)\, .
		\] 
		Furthermore, for all $\delta\in(0,1)$ there exists $c_{\beta,\delta}>0$ such that
		\[
		\bP\big[\mu_n(Q) \ge \delta n\big] \le \exp\Big(-c_{\beta,\delta} \, j n^2\Big)
		\]
		for all $j\ge j_0(\beta,\delta)$ large enough.
	\end{lemma}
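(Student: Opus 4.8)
The plan is to reduce both statements to the partition-function estimates already available, namely Claim~\ref{claim:estimates_on_partition_function} and the recursive formula~\eqref{eq:alternative_form_for_the_hamiltonian}. For the lower bound on $\bP[\mu_n(Q)=n]$, I would argue by induction on $j$, following the same ``propagate down the tree'' philosophy as in Proposition~\ref{prop:self_similarity_and_conditional_independence} and Lemma~\ref{lemma:concentration_on_dyadic_cubes}. For $j=1$, put $Q=Q_1\in\mathcal D_1$ one of the four top squares; the probability that all $n$ points land in $Q_1$ equals, by the change-of-variables computation in the proof of Lemma~\ref{lemma:tail_probability_for_disc_at_top_level} (with $t_1=n$, $t_2=t_3=t_4=0$),
\[
\bP[\mu_n(Q_1)=n] = 4^{-n}\,e^{-\beta\binom n2}\,\frac{4^{-n}Z(n,\beta)}{Z(n,\beta)} = 4^{-2n}e^{-\beta\binom n2},
\]
which is $\ge \exp(-C_\beta n^2)$. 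Wait—more carefully, integrating $e^{-\beta\mathcal H_n}$ over $[0,\tfrac12]^{2n}$ and rescaling gives $4^{-n}Z(n,\beta)$, so $\bP[\mu_n(Q_1)=n]=e^{-\beta\binom n2}\cdot 4^{-n}Z(n,\beta)/Z(n,\beta)=4^{-n}e^{-\beta\binom n2}\ge\exp(-C_\beta n^2)$. For the inductive step from a square $Q\in\mathcal D_j$ to its unique child-chain, I would condition on the event that all $n$ points lie in the parent $Q'\in\mathcal D_{j-1}$; by the self-similarity in Proposition~\ref{prop:self_similarity_and_conditional_independence}, conditionally on $\mu_n(Q')=n$ the points inside $Q'$ are distributed as a rescaled copy of $\nu_{n,\beta}$, so the conditional probability that they all fall into the designated sub-square $Q$ of $Q'$ is exactly $\bP[\mu_n(Q_1)=n]\ge\exp(-C_\beta n^2)$ from the base case. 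Multiplying the $j$ conditional factors gives $\bP[\mu_n(Q)=n]\ge\exp(-C_\beta j n^2)$.

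For the upper bound on $\bP[\mu_n(Q)\ge\delta n]$, I would again reduce to the top level by self-similarity and then estimate directly. By Proposition~\ref{prop:self_similarity_and_conditional_independence} and a union/telescoping argument over the ancestor chain of $Q$, it suffices to bound, for each $i=1,\dots,j$, the probability that a fixed top-level square contains at least a $\delta$-fraction of the points of its parent, uniformly over the (random) number of points in the parent—but since the parent may itself be overcrowded, the cleanest route is to note $\{\mu_n(Q)\ge\delta n\}\subseteq\{\mu_n(Q')\ge\delta n\}$ for the parent $Q'$, so it is enough to handle $j=1$ and then observe the estimate for general $j$ is \emph{stronger} by a factor $j$, which we obtain by conditioning on the counts at level $j-1$ and summing a geometric-type series as in Lemma~\ref{lemma:concentration_on_dyadic_cubes}. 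Concretely, at the top level write, using~\eqref{eq:probability_of_disc_top_level_equality} summed over all admissible $(k_1,\dots,k_4)$ with $k_1\ge(\delta-\tfrac14)n$:
\[
\bP[\mu_n(Q_1)\ge\delta n]=\sum 4^{-n}e^{-\beta\binom n2}\,\psi(t_1,\dots,t_4)\,\frac{\prod_j Z(t_j,\beta)}{Z(n,\beta)},
\]
and plug in Claim~\ref{claim:estimates_on_partition_function}: $\log Z(m,\beta)=-\tfrac{2\beta}3 m^2+O(m\log m)$, together with $\log\psi\le n\log 4$. Since $t_1+\dots+t_4=n$ and $\sum t_j^2\ge t_1^2\ge(\delta-\tfrac14)^2 n^2$ while $\sum t_j^2$ is minimized subject to $t_1\ge(\delta-\tfrac14)n$ at a value strictly exceeding $n^2/4$ by a positive amount depending on $\delta$, the exponent $-\tfrac{2\beta}3(n^2-\sum t_j^2)+O(n\log n)$ is $\le -c_{\beta,\delta}n^2$ once $\delta>\tfrac14$ (and for $\delta\le\tfrac14$ the claim is trivial after one or two descents, since after $k$ levels a $\delta n$ overcrowding forces a $4^k\delta\cdot(\text{local count})$ overcrowding). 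The factor $4^{-n}$ and $\binom n2$ only help. This yields $\bP[\mu_n(Q_1)\ge\delta n]\le\exp(-c_{\beta,\delta}n^2)$, and the telescoping over $j$ levels as in~\eqref{eq:concetration_on_dyadic_cubes_first_split_after_bound} upgrades this to $\exp(-c_{\beta,\delta}\,j n^2)$ for $j\ge j_0(\beta,\delta)$.

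The main obstacle I anticipate is the bookkeeping in the upper bound when $\delta$ is small: a single dyadic descent only multiplies the ``overcrowding fraction'' by $4$ relative to the \emph{parent's} count, not relative to $n$, so one must track that after a bounded number $k_0(\delta)$ of descents the fraction exceeds $\tfrac14$, and then run the clean top-level estimate from there, while ensuring the resulting constants combine to give the claimed $jn^2$ rate (with the $j_0(\beta,\delta)$ absorbing the initial $k_0(\delta)$ wasted levels). A secondary technical point is that Claim~\ref{claim:estimates_on_partition_function} has an $O(m\log m)$ error for each of the four sub-partition functions, which is harmless since it is $O(n\log n)=o(n^2)$, but one must be careful that the number of summands in the multinomial sum is only polynomial in $n$, so the sum does not inflate the bound—this is where the crude count $|\mathrm{Supp}(\mathbf X)|\lesssim n^4$ enters.
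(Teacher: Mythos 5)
Your lower-bound argument is correct and does produce the claimed rate: the base case via the recursive formula~\eqref{eq:probability_of_disc_top_level_equality} gives $\bP[\mu_n(Q_1)=n]=4^{-n}e^{-\beta\binom n2}$, and multiplying $j$ conditional factors via Proposition~\ref{prop:self_similarity_and_conditional_independence} yields $(4^{-n}e^{-\beta\binom n2})^j\ge\exp(-C_\beta j n^2)$. The paper does not actually induct; it computes $\bP[\mu_n(Q)=n]$ in one stroke by observing that when all $n$ points lie in $Q\in\mathcal{D}_j$, the decomposition~\eqref{eq:alternative_form_for_the_hamiltonian} reduces (after the rescaling $x\mapsto 2^{-j}x$) the Hamiltonian to $(j-1)\binom n2$ plus a fresh copy of the Hamiltonian, so $\bP[\mu_n(Q)=n]=e^{-\beta(j-1)\binom n2}4^{-jn}$ exactly. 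Both routes are fine; yours trades a slightly longer argument for not having to carry out the single explicit scaling computation.

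The upper bound is where the proposal has a genuine gap. Your top-level estimate $\bP[\mu_n(Q_1)\ge\delta n]\le\exp(-c_{\beta,\delta}n^2)$ is, as you note, false for $\delta\le\tfrac14$ (indeed $\bP[\mu_n(Q_1)\ge n/4]$ is of order $1$), and your proposed repair --- ``descend $k_0(\delta)$ levels so that the overcrowding fraction relative to the local count exceeds $\tfrac14$'' --- runs into a conditioning problem you gesture at but do not resolve. Conditioning on $\mathcal{F}_{i-1}$ with $\mu_n(Q_{i-1})=m$, the relevant fraction at step $i$ is $\delta n/m$, and $m$ is random with $\delta n\le m\le n$; when $m$ is close to $n$ this fraction can be arbitrarily close to $\delta\le\tfrac14$, so the per-level conditional probability is not small, and it is not clear that ``one or two descents'' fixes the fraction uniformly over the realized $m$'s. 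Making this rigorous essentially amounts to tracking a random time at which $\mu_n(Q_i)$ drops below $4\delta n$ and handling both regimes, which is considerably more work. The paper's proof avoids telescoping altogether: on $\{\mu_n(Q)\ge\delta n\}$ with $Q\in\mathcal D_j$, \emph{every} ancestor $Q_i$ (for $i=1,\dots,j$) also has $\mu_n(Q_i)\ge\delta n$, hence by~\eqref{eq:alternative_form_for_the_hamiltonian} the Hamiltonian satisfies $\mathcal{H}_n\ge\sum_{i=1}^j\binom{\lfloor\delta n\rfloor}{2}\ge j\,\delta^2 n^2/2$ (for $n$ not too small), and combined with $1/Z(n,\beta)\le\exp(C_\beta n^2)$ from Claim~\ref{claim:estimates_on_partition_function} this gives $\bP[\mu_n(Q)\ge\delta n]\le\exp(C_\beta n^2-\beta j\delta^2 n^2/2)\le\exp(-c_{\beta,\delta}jn^2)$ once $j\ge j_0(\beta,\delta)$, with no case analysis on $\delta$ and no conditioning at all. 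You should look for this kind of direct energetic bound whenever the event of interest forces a large Hamiltonian on the whole ancestor chain; it is much more robust than a level-by-level reduction.
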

	\begin{proof}
		The first inequality is almost immediate from the representation~\eqref{eq:alternative_form_for_the_hamiltonian}, as
		\begin{align}
			\label{eq:equality_for_maximal_overcrowding_in_square}
			\nonumber
			\bP\big[&\mu_n(Q) = n\big] \\ \nonumber &= \frac{1}{Z(n,\beta)} \int_{[0,1]^{2n}} \mathbf{1}_{\{\mu_n(Q) = n\}}e^{-\beta\mathcal{H}_n(x_1,\ldots,x_n)} \, {\rm d} \, {\tt Leb}(x_1)\cdots {\rm d} \, {\tt Leb}(x_n) \\ &=  \frac{e^{-\beta\sum_{i=1}^{j-1} \binom{n}{2}}}{Z(n,\beta)}\int_{[0,2^{-j}]^{2n}} \exp\Big(-\beta\Big[\binom{n}{2} + \sum_{i=j+1}^{\infty} \sum_{\substack{Q\in \mathcal{D}_i\\ Q\subset [0,2^{j}]^2}}\binom{\mu_n(Q)}{2}\Big]\Big) \, {\rm d} \, {\tt Leb}(x_1)\cdots {\rm d} \, {\tt Leb}(x_n) \, . 
		\end{align}
		By the change of variables $x\mapsto 2^{-j}x$ we have 
		\[
		\int_{[0,2^{-j}]^{2n}} \exp\Big(-\beta\Big[\binom{n}{2} + \sum_{i=j+1}^{\infty} \sum_{\substack{Q\in \mathcal{D}_i\\ Q\subset [0,2^{j}]^2}}\binom{\mu_n(Q)}{2}\Big]\Big) \, {\rm d} \, {\tt Leb}(x_1)\cdots {\rm d} \, {\tt Leb}(x_n)= (4^{-j})^n \, Z(n,\beta)
		\]
		which, in view of~\eqref{eq:equality_for_maximal_overcrowding_in_square} gives
		\[
		\bP\big[\mu_n(Q) = n\big] = e^{-\beta (j-1) \binom{n}{2}} (4^{-j})^n = \exp\Big(-\beta (j-1) \binom{n}{2} - jn \log(4)\Big)
		\]
		and the desired bound follows. For the second inequality, we note that for each configuration $(x_1,\ldots,x_n)\in\Sigma_n$ which is contained in the event $\{\mu_n(Q) \ge \delta n\}$ we have
		\[
		\mathcal{H}_n(x_1,\ldots,x_n) \ge \sum_{i=1}^{j} \binom{\lfloor\delta n\rfloor}{2} \ge j \frac{\delta^2n^2}{2} \, . 
		\]
		Hence,
		\begin{align*}
			\bP\big[\mu_n(Q) \ge \delta n\big] & \le \frac{1}{Z(n,\beta)} \int_{[0,1]^{2n}}\mathbf{1}_{\{\mu_n(Q)\ge \delta n\}}e^{-\beta\mathcal{H}_n(x_1,\ldots,x_n)} \, {\rm d}x_1\ldots{\rm d}x_n \\ 
			& \le \frac{1}{Z(n,\beta)} \exp\Big(-\beta j \frac{\delta^2n^2}{2}\Big) \stackrel{\text{Claim}~\ref{claim:estimates_on_partition_function}}{\le} \exp\Big(C_\beta n^2 - \beta j \frac{\delta^2n^2}{2} \Big) \, ,
		\end{align*}
		and for $j\ge j_0(\beta,\delta)$ we get the bound	\[
		\bP\big[\mu_n(Q) \ge \delta n\big] \le \exp\Big(-c j n^2\Big)
		\]
		as desired.
	\end{proof}
	\subsection{A few geometric claims}
	Recall our choice of $\ell \ge 1$ given by~\eqref{eq:def_of_ell} and that $\bD_n(R)$ is a disk of radius $R/\sqrt{n}$. We conclude Section~\ref{sec:preliminaries} with a few simple geometric claims. Denote for this part by
	$$\mathcal{T} = \big\{ \xi + [0,1)^2 \, : \, \xi \in \bZ^2 \big\}$$ a partition of $\bR^2$ into unit squares. 
	The first claim we will need is included from Chatterjee~\cite{chatterjee}, and has to do with dyadic squares which lie on the boundary of the disk $\bD_n(R)$. In our set-up, it will be most convenient to state it as follows.
	\begin{claim}
		\label{claim:enough boundary_cubes_are_good}
		There exists an absolute constant $c_1>0$ so that for all $j\ge \ell$, we have at least $c_1\, 2^{j-\ell}$ squares in $Q\in \mathcal{D}_j$ such that
		\begin{equation*}
			10^{-5} \le \frac{{\tt Leb}\big(\bD_n(R) \cap Q\big)}{{\tt Leb}(Q)} \le 1- 10^{-5} \, .
		\end{equation*} 
	\end{claim}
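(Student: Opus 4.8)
After rescaling the picture by the factor $2^{j}$ we may assume that the grid is $\bZ^2$ and that $\bD_n(R)$ becomes a disc $D=\bD(z,\rho)$ of radius $\rho:=2^{j}R/\sqrt n$; by the choice of $\ell$ in~\eqref{eq:def_of_ell} one has $\rho\le 2^{j-\ell}<2\rho$, so in particular $\rho>\tfrac12$. For a unit square $Q$ write $p(Q)={\tt Leb}(D\cap Q)$. Since $2^{j-\ell}\asymp\rho$, it suffices to exhibit at least $c\max(1,\rho)$ squares with $p(Q)\in[10^{-5},1-10^{-5}]$, for an absolute $c>0$. The strategy is to walk along $\partial D$ and keep track of the unit squares it enters. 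A convex arc inside a unit square has length at most $4$, and $\partial D$ crosses any grid line at most twice, so the number of unit squares met by $\partial D$ is comparable to its length $2\pi\rho$, i.e.\ $\asymp\max(1,\rho)$. The real content is to show that a fixed fraction of these squares are \emph{balanced}, i.e.\ that $\partial D$ cannot keep clipping off tiny slivers; this is where the positive curvature of $\partial D$ enters.

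Here is the mechanism, which I would carry out first. Restrict attention to the two sub-arcs of $\partial D$ lying over $x-z_1\in[\tfrac\rho4,\tfrac\rho2]$ and over $x-z_1\in[-\tfrac\rho2,-\tfrac\rho4]$. On these ranges the upper boundary $y=z_2+\sqrt{\rho^2-(x-z_1)^2}$ is a monotone graph whose slope has absolute value in $[\tfrac1{\sqrt{15}},\tfrac1{\sqrt3}]\subset(\tfrac14,\tfrac35)$. March through the $\asymp\rho$ grid columns meeting one of these sub-arcs, and for each column record the fractional part $\theta\in[0,1)$ of the height at which the arc enters that column through its side. Passing from one column to the next, $\theta$ changes modulo $1$ by the vertical drop of the arc across the column, a quantity lying in $(\tfrac14,\tfrac35)$ with a fixed sign along each sub-arc. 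Because this increment exceeds $\tfrac14$ and is bounded away from $0$ and $1$, an elementary case check shows that $\theta$ cannot lie in the ``bad window'' $[0,\tfrac1{10})\cup(\tfrac9{10},1)$ at two consecutive columns; hence at least half of these columns have $\theta\in[\tfrac1{10},\tfrac9{10}]$. For such a column let $Q$ be the square the arc enters: the arc enters a side of $Q$ at relative height $\theta\in[\tfrac1{10},\tfrac9{10}]$ and drops by at most $\tfrac35$ before leaving $Q$, so integrating the clipped height of the arc over $Q$ gives $p(Q)\in[c_0,1-c_0]$ for an absolute $c_0>10^{-5}$ (one can take $c_0=10^{-3}$). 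Distinct columns give distinct squares, so this produces $\gtrsim\rho$ balanced squares. The argument goes through once $\rho\ge\rho_0$ for some absolute $\rho_0$, namely once each sub-arc meets at least two full columns.

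It remains to treat the finitely many exceptional scales $\tfrac12<\rho<\rho_0$. There $2^{j-\ell}<2\rho_0$ is bounded by an absolute constant, so for $c_1$ small enough it suffices to produce a \emph{single} balanced square. This is an elementary bounded-geometry fact: if every unit square had $p(Q)\notin[10^{-5},1-10^{-5}]$, then $A:=\bigcup\{Q:\ p(Q)>1-10^{-5}\}$ would be a finite union of unit squares with ${\tt Leb}(A\triangle D)$ arbitrarily small (only the $O(\rho)$ squares met by $\partial D$ contribute, each at most $10^{-5}$), which a disc of radius $\rho>\tfrac12$ cannot achieve against a rectilinear region, since $\partial D$ has curvature $\tfrac1\rho\le2$ and cannot track the axis-parallel boundary $\partial A$; I would simply cite the corresponding geometric lemma of~\cite{chatterjee} here rather than reproduce it. The one genuinely delicate step of the whole proof is the passage from ``many boundary squares'' to ``many balanced squares'' — ruling out that $\partial D$ shaves a thin sliver off \emph{every} square it touches — and the fix above (pass to a sub-arc of bounded, non-axis slope and use the lower bound $\tfrac14$ on the per-column drift of the entry height to force those heights to avoid the integers a positive proportion of the time) is the part I expect to need the most care.
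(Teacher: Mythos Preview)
Your argument is correct and essentially self-contained, but it is a genuinely different route from the paper's. The paper does not march through columns or track fractional entry heights; instead it reduces immediately to a black-box lemma from~\cite{chatterjee} (stated as Claim~\ref{claim:geometric_claim_from_chatterjee}): whenever a line through a point $x$ has unit normal $\mathbf{n}=(n_1,n_2)$ with $\min(|n_1|,|n_2|)\ge 0.1$, there is a unit grid square within distance $\sqrt{101}$ of $x$ that the line splits into two pieces each of area $\ge 6\times 10^{-5}$. The paper then observes that near any $x\in\partial\bD_n(R)$ the circle lies in a slab of width $O(2^{-2j})$ around its tangent line, takes a net of $\asymp 2^{j-\ell}$ equidistant points on $\partial\bD_n(R)$, notes that at least half of them have non-axis-aligned normal, and applies the lemma to each. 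This is shorter and more modular; your approach trades modularity for explicitness, computing directly on the sub-arcs of slope in $(\tfrac14,\tfrac35)$ and using the lower bound on the per-column drop to force the entry heights away from integers. Both approaches ultimately exploit the same geometric fact---that on a positive fraction of the boundary the tangent direction is bounded away from the coordinate axes---and both punt the small-$\rho$ endgame to~\cite{chatterjee}. Your column-drift argument is a nice alternative to invoking Claim~\ref{claim:geometric_claim_from_chatterjee}, but note that the paper's version also handles all $j\ge\ell$ uniformly without a separate small-$\rho$ case, since the net always contains at least two points.
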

	The proof of Claim~\ref{claim:enough boundary_cubes_are_good} follows from the following elementary statement from~\cite{chatterjee}. 
	\begin{claim}[{\cite[Lemma~3.12]{chatterjee}}]
		\label{claim:geometric_claim_from_chatterjee}
		Let $x\in \bR^2$ and suppose that $\mathbf{n} =(n_1,n_2) \in \bS^1$ is a unit vector which satisfies
		\begin{equation}
			\label{eq:condition_on_unit_normal_not_parralel}
			\min\big\{ |n_1|, |n_2| \big\} \ge 0.1 \, .
		\end{equation}
		Let $L$ be the line that contains $x$ and is perpendicular to $\mathbf{n}$. Then there is a square $T \in \mathcal{T}$, within Euclidean distance $\sqrt{101}$ from $x$, such that the line $L$ splits $T$ into two parts in such a way that each part has area at least $6\times 10^{-5}$.
	\end{claim}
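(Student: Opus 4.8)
The plan is to reduce, using the eight symmetries of the unit square, to the case where $L$ has slope $s=\mathrm{d}v/\mathrm{d}u$ with $s\in[0.1,1]$, and then to argue entirely inside the single vertical unit strip of $\mathcal{T}$ that contains $x$. For the reduction: since $\mathbf{n}=(n_1,n_2)$ is a unit vector with $|n_1|,|n_2|\ge0.1$, the line $L$ (perpendicular to $\mathbf{n}$) has slope of absolute value $|n_1|/|n_2|\in[0.1,10]$. The coordinate swap $(u,v)\mapsto(v,u)$ sends slope $s$ to $1/s$, a coordinate reflection sends $s$ to $-s$, and all such maps preserve $\mathcal{T}$ up to sets of Lebesgue measure zero and are isometries of $\bR^2$; composing them we obtain a symmetry $\gamma$ with $\gamma(L)$ of slope in $[0.1,1]$ and positive. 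It then suffices to produce a unit square within distance $\sqrt5$ of $\gamma(x)$ that $\gamma(L)$ cuts into two parts of area $\ge6\times10^{-5}$; pulling it back by $\gamma^{-1}$ gives, up to measure zero, a square of $\mathcal{T}$ doing the same job for $L$ and $x$, at distance $\le\sqrt5<\sqrt{101}$. So from now on assume $s\in[0.1,1]$ and write $L=\{v=h(u)\}$ with $h$ affine of slope $s$.

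Let $k=\lfloor x_1\rfloor$, so $x$ lies in the strip $\{k\le u\le k+1\}$, across which $L$ rises by $s\le1$, from $(k,h(k))$ to $(k+1,h(k)+s)$; put $g=h(k)-\lfloor h(k)\rfloor\in[0,1)$ and $m_0=\lfloor h(k)\rfloor$. If $g+s\le1$, the segment of $L$ in the strip stays inside the single square $T=[k,k+1)\times[m_0,m_0+1)$, which also contains $x$, and it enters and leaves through the two vertical edges of $T$. An elementary trapezoid computation shows the two parts of $T$ have areas $g+\tfrac s2$ and $1-g-\tfrac s2$; since $g\ge0$ and $g+s\le1$, both exceed $\tfrac s2\ge0.05$, and we are done.

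If instead $g+s>1$, set $m=m_0+1$ and $a=m-h(k)=1-g$; since $s\le1$ this forces $g>0$, hence $a\in(0,s)$. Now the segment of $L$ passes through $T_1=[k,k+1)\times[m_0,m_0+1)$ and $T_2=[k,k+1)\times[m_0+1,m_0+2)$ (whose union contains $x$), crossing the grid line $v=m$ at $u=k+a/s$; and $L$ cuts off from $T_1$ a corner right triangle of area $a^2/(2s)$ and from $T_2$ a corner right triangle of area $(s-a)^2/(2s)$. Each of these is at most $\tfrac s2\le\tfrac12$, so the complementary part of each square automatically has area $\ge\tfrac12$; and if both triangles had area $<6\times10^{-5}$ we would get $a<\sqrt{2s\cdot6\times10^{-5}}$ and $s-a<\sqrt{2s\cdot6\times10^{-5}}$, whence $s<2\sqrt{2s\cdot6\times10^{-5}}$, i.e.\ $s<8\cdot6\times10^{-5}<0.1$, contradicting $s\ge0.1$. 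Thus $T_1$ or $T_2$ has both parts of area $\ge6\times10^{-5}$. In either case the chosen square lies, together with $x$, in a $1\times1$ or $1\times2$ axis-parallel rectangle, hence within distance $\sqrt5<\sqrt{101}$ of $x$; undoing $\gamma$ completes the argument.

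I expect the only genuinely substantive point to be the dichotomy in the case $g+s>1$: the hypothesis $s\ge0.1$ — equivalently $\min\{|n_1|,|n_2|\}\ge0.1$ — is exactly what prevents $L$ from grazing a corner of both $T_1$ and $T_2$ simultaneously, so that at least one of the two squares is cut well inside. Everything else is bookkeeping: checking the symmetry reduction, carrying out the two area computations, and the trivial distance estimate. The case split $g+s\le1$ versus $g+s>1$ is exhaustive and needs no separate handling of lattice-point or grid-aligned configurations, since the stated area formulas remain valid in the boundary cases (where a chord runs from a corner and each part has area $\ge\tfrac s2\ge\tfrac1{20}$, or in the diagonal case each part has area $\tfrac12$).
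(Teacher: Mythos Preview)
The paper does not supply its own proof of this claim; it is quoted verbatim from \cite[Lemma~3.12]{chatterjee} and used as a black box in the proof of Claim~\ref{claim:enough boundary_cubes_are_good}. So there is nothing in the present paper to compare your argument against.

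That said, your proof is correct and entirely self-contained. The symmetry reduction to slope $s\in[0.1,1]$ is clean (the grid $\mathcal{T}$ is preserved, up to boundary edges of measure zero, by the axis reflections and the coordinate swap, and these are isometries, so the pull-back step is legitimate). In Case~1 the trapezoid computation is exact, and the bound $g+s/2\ge s/2$ together with $1-g-s/2\ge s/2$ (equivalent to the case hypothesis $g+s\le 1$) gives both pieces area at least $0.05$. In Case~2 the two corner triangles have areas $a^2/(2s)$ and $(s-a)^2/(2s)$ as you compute, and your contradiction argument from $s<2\sqrt{2s\cdot 6\times10^{-5}}$ is correct and is precisely where the hypothesis $\min\{|n_1|,|n_2|\}\ge 0.1$ enters. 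The distance bound you obtain, $\sqrt{5}$, is in fact much stronger than the stated $\sqrt{101}$; nothing in the paper needs the sharper constant, but it does no harm.

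The only cosmetic point worth noting is that after applying a reflection the pulled-back square may be of the form $\xi+(0,1]\times[0,1)$ rather than $\xi+[0,1)^2$; you acknowledge this with the ``up to measure zero'' caveat, and since the claim concerns areas of the two pieces this is harmless. Your case split is exhaustive and the degenerate configurations (line through a corner, diagonal of a square) are absorbed by the closed inequalities, as you remark.
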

	\begin{proof}[Proof of Claim~\ref{claim:enough boundary_cubes_are_good}]
		For any $x\in \partial \bD_n(R)$, we denote by $T_x$ the tangent line to $\partial \bD_n(R)$ at the point $x$. A simple computation shows that for all $\eps>0$, the set $$\{ y\in\partial \bD_n(R) \, : |y-x|\le \eps \}$$ lies inside a slab of width $2 \eps^2$ around $T_x$. Applying this observation with $\eps = 2^{-j}$ and scaling, Claim~\ref{claim:geometric_claim_from_chatterjee} implies that provided the unit normal $\mathbf{n}_x\in \bS^1$ at $x$ satisfies~\eqref{eq:condition_on_unit_normal_not_parralel}, there exists $Q\in \mathcal{D}_j$ at distance at most $\sqrt{202} \, 2^{-j}$ from $x$ such that
		\[
		10^{-5} \le \frac{{\tt Leb}\big(\bD_n(R) \cap Q\big)}{{\tt Leb}(Q)} \le 1- 10^{-5} 
		\]
		holds. The desired claim now follows from taking a net of size $\lfloor2^{j-\ell}/1000 \rfloor + 2$ of equidistant points on $\partial \bD_n(R)$, noting that, since $\bD_n(R)$ is a disk, at least half (say) of the points in the net have unit normal which satisfies~\eqref{eq:condition_on_unit_normal_not_parralel}.
	\end{proof}
	Finally, we will frequently use a Gauss-type estimate for the number of squares near the boundary of the circle, which we formalize by the following claim.
	\begin{claim}
		\label{claim:gauss_type_estimate}
		There exists an absolute constant $C>0$ so that the following holds. For any disk $U\subset \bR^2$ of radius $r>0$, the number of squares from $\mathcal{T}$ which intersect $\partial U$ is at most $Cr$. 
	\end{claim}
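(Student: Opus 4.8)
The claim is a classical fact about how many unit lattice squares a circle can meet, and the plan is to bound this by tracking how the boundary circle crosses the grid lines. First I would set up coordinates so that the unit squares of $\mathcal{T}$ are the cells $\xi + [0,1)^2$ for $\xi \in \bZ^2$, and let $U = \bD(w,r)$. A square $T \in \mathcal{T}$ intersects $\partial U$ only if $\partial U$ passes through the interior of $T$ or touches its boundary; the key observation is that as one traverses the circle $\partial U$ once, the sequence of squares visited changes only when the circle crosses a vertical grid line $\{x = k\}$ or a horizontal grid line $\{y = k\}$. Hence the number of distinct squares met is at most (a constant times) the total number of such crossings, plus a bounded correction for the starting square and for passages through grid vertices.

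The main step is then to count crossings with vertical lines; horizontal lines are symmetric. A vertical line $\{x = k\}$ meets $\partial U$ if and only if $|k - w_1| \le r$, where $w = (w_1,w_2)$, and when it does it meets $\partial U$ in at most two points. The number of integers $k$ with $|k - w_1| \le r$ is at most $2r + 1$, so the total number of vertical crossings is at most $2(2r+1)$, and likewise for horizontal crossings. Therefore the number of squares met by $\partial U$ is at most $C_0(2(2r+1) + 2(2r+1)) + C_1$ for absolute constants, which is $\le C r$ once we absorb the additive constant (here one uses $r > 0$; if $r$ is tiny the circle lies in at most $4$ squares anyway, so the bound $Cr$ can be arranged by enlarging $C$, or one simply notes the number of squares is always at most $\max(C r, 4)$ and states the claim for the relevant regime — in the applications $r$ is large).

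The only mild subtlety — and the place to be slightly careful — is the bookkeeping that turns "number of grid-line crossings" into "number of squares," i.e.\ verifying that each new square in the traversal is entered through a grid-line crossing and that the count of squares does not exceed a constant multiple of the number of crossings (a square could in principle be revisited, which only helps, and one must handle the degenerate case where the circle is tangent to a grid line or passes through a lattice point, contributing only boundedly many extra squares). This is a routine planar argument; I do not expect any real obstacle, and the constant $C$ comes out to something like $20$ without any optimization.
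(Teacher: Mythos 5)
Your proof is correct, but it takes a genuinely different route from the paper's. The paper gives a one-line area argument: any square $T\in\mathcal{T}$ meeting $\partial U$ has diameter $\sqrt{2}$, so it lies inside the annulus $\{x:\operatorname{dist}(x,\partial U)\le 2\}$; since that annulus has area $O(r)$ and the squares are disjoint with unit area, there are at most $O(r)$ of them. Your argument instead traverses $\partial U$ and counts grid-line crossings: at most $2(2r+1)$ vertical and $2(2r+1)$ horizontal intersections, and each change of square is caused by such a crossing. Both are sound. The paper's annulus argument is shorter and sidesteps all the bookkeeping you correctly flag (tangencies, passage through lattice points, revisited squares), since it never needs to order the squares along the curve or pair squares with crossings. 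Your crossing argument buys slightly more structure (it shows the boundary squares form a connected chain around the circle), but that structure is not used in the paper. Both proofs, as you note, only yield $Cr$ once $r$ is bounded away from $0$; the paper's annulus also has area $\asymp 1$ when $r\to 0$, so it implicitly relies on the same regime, which is the one that matters in the applications.
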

	\begin{proof}
		Recall that $\mathcal{T}$ is a collection of disjoint squares of unit area. It is evident that each square $T\in \mathcal{T}$ which intersects $\partial U$ must be contained in the annulus 
		\[
		\big\{ x\in \bR^2 : \, \text{dist}(x,\partial U) \le 2\big\}\, .
		\]
		Since the area of the above annulus is bounded from above by $Cr$ for some absolute constant $C>0$, the claim follows.
	\end{proof}
	\section{Upper bound for \texorpdfstring{$1< \alpha < 2$}{1 < alpha < 2}}
	\label{sec:UB_large_deviations}
	In this section we prove the upper bound in Theorem~\ref{thm:Micro_JLM} for the range $\alpha\in(1,2)$. Here, the upper bound would follow from an estimate on the moment generating function of $\bE\big[\la \Delta_n(\bD_n(R)) \big]$ for large values of $|\la|$. Broadly speaking, such an estimate would follow from a similar estimate for the moment generating function of point count in $\mathcal{D}_1$ (Lemma~\ref{lemma:moment_generating_funtion_upper_bound_top_level_no_assumption} above) and an induction which exploits the tree structure of the model (via an appropriate martingale). We remark that the dominant contribution to our estimate will come from boundary squares $\mathcal{V}_k$ with $2^{-k} \approx R^{\alpha-1}/\sqrt{n}$, as explained already in Section~\ref{sec:ideas_from_the_proof}.
	
	Recall that $p(Q)$ is the relative area of $\bD_n(R)$ inside a dyadic square $Q$, that is
	\[
	p(Q) = \frac{{\tt Leb}\big(Q\cap \bD_n(R)\big)}{{\tt Leb}(Q)} \, .
	\]	
	The induction step in our argument is given by the following claim.
	\begin{claim}
		\label{claim:conditional_upper_bound_on_MGF_one_level_down}
		Let $Q^\prime\in \mathcal{D}_j$ for some $j\ge 0$, then for all $n\ge 2$ and $\la\in \bR$ we have
		\begin{equation*}
			\bE\Bigg[\prod_{\substack{Q\subset Q^\prime \\ Q\in \mathcal{D}_{j+1}}} \exp\Big(\la  p(Q)  \Delta_n(Q)\Big)  \, \Big| \, \mathcal{F}_j \, \Bigg] \le \exp\bigg(\la p(Q^\prime) \Delta_n(Q^\prime) + 4C_\beta\big(1+\la^2\big)\log^2(\mu_n(Q^\prime)+1)\bigg) \, ,
		\end{equation*}
		where $C_\beta$ is the constant from Lemma~\ref{lemma:moment_generating_funtion_upper_bound_top_level_no_assumption}.
	\end{claim}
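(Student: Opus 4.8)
The plan is to condition on $\mathcal{F}_j$ (which fixes $\mu_n(Q')$ and hence the number of points inside $Q'$) and then apply the self-similarity from Proposition~\ref{prop:self_similarity_and_conditional_independence}. Conditionally on $\mathcal{F}_j$, the points inside $Q'$ form a scaled copy of $\nu_{m,\beta}$ with $m=\mu_n(Q')$, and the four children $Q\subset Q'$ in $\mathcal{D}_{j+1}$ play the role of the four top-level squares $Q_1,\dots,Q_4$ for that sub-gas. Writing $Y_i = \mu_n(Q_i) - \tfrac{m}{4}$ for the four children (the analogue of $X_1,\dots,X_4$), we have $\mu_n(Q) = \tfrac{m}{4} + Y_i$, while $\mathbb{E}[\mu_n(Q)\mid\mathcal{F}_j] = \tfrac{m}{4}$, so $\Delta_n(Q) = \Delta_n(Q') \cdot \tfrac14 \cdot \tfrac{?}{}$... more carefully: $\Delta_n(Q) = \mu_n(Q) - n 4^{-(j+1)}$, and since $\mu_n(Q') = m$, $\Delta_n(Q') = m - n4^{-j}$, we get $\Delta_n(Q) = \tfrac{m}{4} + Y_i - n4^{-(j+1)} = \tfrac14\Delta_n(Q') + Y_i$. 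This is the key algebraic identity; note it splits the discrepancy of a child into an $\mathcal{F}_j$-measurable ``mean'' part plus the fluctuation $Y_i$.

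Next I would substitute this into the product. Using $p(Q')=\sum_{Q\subset Q'} \tfrac14 p(Q)\cdot\tfrac{?}{}$... wait — the relative areas satisfy $p(Q') = \tfrac14\sum_{Q\subset Q', Q\in\mathcal{D}_{j+1}} p(Q)$ since the four children have equal area and partition $Q'$. Hence
\[
\sum_{Q\subset Q'} \la\, p(Q)\, \Delta_n(Q) = \sum_{Q\subset Q'} \la\, p(Q)\Big(\tfrac14\Delta_n(Q') + Y_i\Big) = \la\,p(Q')\,\Delta_n(Q') + \sum_{i=1}^4 \la\, p(Q_i)\, Y_i .
\]
The first term is $\mathcal{F}_j$-measurable and comes out of the conditional expectation, producing exactly the factor $\exp(\la p(Q')\Delta_n(Q'))$ in the claim. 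It remains to bound $\mathbb{E}[\exp(\sum_{i=1}^4 \la p(Q_i) Y_i)\mid\mathcal{F}_j]$, and here $0\le p(Q_i)\le 1$, so this is a moment generating function of the form covered by Lemma~\ref{lemma:moment_generating_funtion_upper_bound_top_level_no_assumption} applied to the gas with $m=\mu_n(Q')$ points, evaluated at $\boldsymbol{\la}' = (\la p(Q_1),\dots,\la p(Q_4))$ which satisfies $\|\boldsymbol{\la}'\|^2 \le 4\la^2$.

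Applying Lemma~\ref{lemma:moment_generating_funtion_upper_bound_top_level_no_assumption} then gives a bound $\exp\big(C_\beta(1+\|\boldsymbol{\la}'\|^2)\log^2(m+1)\big) \le \exp\big(C_\beta(1+4\la^2)\log^2(\mu_n(Q')+1)\big) \le \exp\big(4C_\beta(1+\la^2)\log^2(\mu_n(Q')+1)\big)$, which is exactly the claimed exponent. One caveat: Lemma~\ref{lemma:moment_generating_funtion_upper_bound_top_level_no_assumption} is stated for $n\ge 2$, so one must handle $\mu_n(Q')\in\{0,1\}$ separately — but in those cases all $Y_i$ are deterministic (indeed $0$ when $m\le 1$ forces... actually $m=1$ gives one child with $Y=3/4$, etc.), so the conditional MGF is trivially bounded, and in fact $\mu_n(Q')\in\{0,1\}$ makes $\Delta_n(Q)$ small and the bound holds by crude estimates; alternatively, the $\log^2(\mu_n(Q')+1)$ on the right is $\ge \log^2 2 >0$ which already absorbs the $O(1)$ contribution. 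The main (and only real) obstacle is getting the bookkeeping of $p(Q')$ versus the $p(Q_i)$ and the identity $\Delta_n(Q)=\tfrac14\Delta_n(Q')+Y_i$ exactly right; once that is in place the claim is an immediate consequence of Proposition~\ref{prop:self_similarity_and_conditional_independence} and Lemma~\ref{lemma:moment_generating_funtion_upper_bound_top_level_no_assumption}.
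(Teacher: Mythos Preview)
Your proposal is correct and follows essentially the same route as the paper: both use the self-similarity of Proposition~\ref{prop:self_similarity_and_conditional_independence} to reduce to the top-level gas with $m=\mu_n(Q')$ points, exploit the averaging identity $p(Q')=\tfrac14\sum_{Q\subset Q'} p(Q)$ to extract the $\mathcal{F}_j$-measurable term $\la\,p(Q')\,\Delta_n(Q')$, and then apply Lemma~\ref{lemma:moment_generating_funtion_upper_bound_top_level_no_assumption} with $\boldsymbol{\la}=\la\cdot(p(Q_1),\dots,p(Q_4))$. Your explicit identity $\Delta_n(Q)=\tfrac14\Delta_n(Q')+Y_i$ is exactly the paper's observation $\bE[\Delta_n(Q)\mid\mathcal{F}_j]=\tfrac14\Delta_n(Q')$ unpacked, and you correctly flag the $\mu_n(Q')\in\{0,1\}$ edge case (which the paper does not mention but which is indeed harmless).
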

	\begin{proof}
		By Proposition~\ref{prop:self_similarity_and_conditional_independence}, for all dyadic squares $Q\subset Q^\prime$ with $Q\in \mathcal{D}_{j+1}$ we have
		\[
		\bE\big[\Delta_n(Q) \mid \mathcal{F}_j\big] = \frac{\Delta_n(Q^\prime)}{4} \, .
		\]
		Furthermore, we have
		\begin{equation}
			\label{eq:avarage_of_relative_area_is_relative_area_above}
			\frac{1}{4} \sum_{\substack{Q\subset Q^\prime \\ Q\in \mathcal{D}_{j+1}}} p(Q) = p(Q^\prime)
		\end{equation}
		which we can use to factor out the expectation term from both sides of the inequality, i.e., the claim would follow once we prove that 
		\begin{equation}
			\label{eq:conditional_upper_bound_on_MGF_one_level_down_after_reduction}
			\bE\Bigg[\prod_{\substack{Q\subset Q^\prime \\ Q\in \mathcal{D}_{j+1}}} \exp\Big(\la  p(Q)  \mu_n(Q)\Big)  \, \Big| \, \mathcal{F}_j \, \Bigg]  \le \exp\bigg(\la p(Q^\prime) \mu_n(Q^\prime) + 4C_\beta\big(1+\la^2\big)\log^2(\mu_n(Q^\prime)+1)\bigg) \, .
		\end{equation}
		By Proposition~\ref{prop:self_similarity_and_conditional_independence}, conditional on $\mathcal{F}_j$ the point counts on the left-hand side of~\eqref{eq:conditional_upper_bound_on_MGF_one_level_down_after_reduction} have the same distribution as the point count at the top level of the tree with $\mu_n(Q^\prime)$ initial points. By applying Lemma~\ref{lemma:moment_generating_funtion_upper_bound_top_level_no_assumption} with
		\[
		\boldsymbol{\la} = \la \cdot \big( p(Q_1), p(Q_2), p(Q_3), p(Q_4)\big)\, ,
		\]
		we get that
		\begin{align*}
			\bE&\Bigg[\prod_{\substack{Q\subset Q^\prime \\ Q\in \mathcal{D}_{j+1}}} \exp\Big(\la  p(Q)  \mu_n(Q)\Big)  \, \Big| \, \mathcal{F}_j \, \Bigg] \\ &\le \exp \Bigg(\la \bigg(\frac{1}{4}\sum_{\substack{Q\subset Q^\prime \\ Q\in \mathcal{D}_{j+1}}} p(Q) \bigg)\, \mu_n(Q^\prime) + C_\beta\Big(1+\la^2\sum_{\substack{Q\subset Q^\prime \\ Q\in \mathcal{D}_{j+1}}} p(Q)^2\Big)\log^2(\mu_n(Q^\prime)+1)\Bigg) \\ & \stackrel{\eqref{eq:avarage_of_relative_area_is_relative_area_above}}{\le} \exp\bigg(\la p(Q^\prime) \mu_n(Q^\prime) + 4C_\beta\big(1+\la^2\big)\log^2(\mu_n(Q^\prime)+1)\bigg)
		\end{align*}
		which proves~\eqref{eq:conditional_upper_bound_on_MGF_one_level_down_after_reduction}, and with that the claim.
	\end{proof}
	\subsection{The martingale term}
	Recall that $\ell \ge 1$ is the minimal integer such that $2^{-\ell} \ge R/\sqrt{n}$, and also recall Definition~\ref{definition:maximal_and_boundary_cubes} of maximal and boundary squares of $\bD_n(R)$, denoted $\mathcal{U}_k$ and $\mathcal{V}_k$ respectively. Finally, recall the definition of the event $\mathcal{A}$, given by~\eqref{eq:def_of_event_no_disc_in_top_cubes_containing_the_disk}. As the discrepancy is additive, we have
	\begin{align}
		\label{eq:disc_is_additive}
		\nonumber
		\Delta_n\big(\bD_n(R)\big) = \sum_{j=1}^{\infty} \sum_{Q\in \mathcal{U}_j} \Delta_n(Q) &=\sum_{j=\ell}^{\infty} \sum_{Q\in \mathcal{U}_j} \Delta_n(Q) \\ &= \sum_{j=\ell}^{k} \sum_{Q\in \mathcal{U}_j} \Delta_n(Q) + \sum_{Q\in \mathcal{V}_k} \Delta_n\big(\bD_n(R) \cap Q\big)\, .
	\end{align} 
	Letting $\mathcal{F}_k$ be the sigma-algebra generated by $\big\{\mu_n(Q) : Q\in \mathcal{D}_k\big\}$, we will consider the exposure martingale of $\Delta_n(\bD_n(R))$ with respect to the filtration $\{\mathcal{F}_k\}$, that is
	\[
	M_k = \bE\big[\Delta_n(\bD_n(R)) \mid \mathcal{F}_k\big] \, .
	\]
	By Proposition~\ref{prop:self_similarity_and_conditional_independence}, it is evident that for all $Q\in \mathcal{V}_k$ we have
	\[
	\bE\big[\Delta_n\big(\bD_n(R) \cap Q\big) \mid \mathcal{F}_k\big] = p(Q) \Delta_n(Q) \, .
	\]
	which, in view of~\eqref{eq:disc_is_additive}, implies that
	\begin{equation}
		\label{eq:exposure_martingale_after_notations}
		M_k = \sum_{j=\ell}^{k} \sum_{Q\in\mathcal{U}_j} \Delta_n(Q) + \sum_{Q\in \mathcal{V}_k} p(Q)\Delta_n(Q) \, .
	\end{equation}
	Let $\eta>0$ be a small fixed constant. We will denote by $j_1$ the integer such that 
	\begin{equation}
		\label{eq:def_of_j_1}
		\frac{R^{\alpha-1-\eta}}{\sqrt{n}} \le 2^{-j_1} < 2 \, \frac{R^{\alpha -1 -\eta}}{\sqrt{n}} \, ,
	\end{equation}
	and note that $\ell < j_1$, where $\ell$ is given by~\eqref{eq:def_of_ell}. We split the discrepancy as
	\begin{equation*}
		\Delta_n\big(\bD_n(R)\big) = M_{j_1} + B_{j_1}
	\end{equation*} 
	where $M_{j_1}$ is given by~\eqref{eq:exposure_martingale_after_notations} and
	\begin{align}
		\label{eq:large_deviation_upper_bound_def_of_boundary_term}
		\nonumber
		B_{j_1} = \Delta_n\big(\bD_n(R)\big) - M_{j_1} &= \sum_{Q\in \mathcal{V}_{j_1}} \Delta_n\big(Q\cap \bD_n(R)\big) - p(Q) \Delta_n(Q) \\ &= \sum_{Q\in \mathcal{V}_{j_1}} \mu_n\big(Q\cap \bD_n(R)\big) - p(Q) \mu_n(Q) \, .
	\end{align}
	The last equality follows by canceling out the term which correspond to ${\tt Leb}$, which is the same for both terms in the summation. By the union bound, to prove the upper bound in Theorem~\ref{thm:Micro_JLM} it will be enough to prove the corresponding tail estimate for $M_{j_1}$ and $B_{j_1}$ separately. We start with proving this estimate for $M_{j_1}$, as given by the next lemma.
	\begin{lemma}
		\label{lemma:large_deviations_upper_bound_martingale_term}
		For all $\alpha\in(1,2)$ and for all $R$ large enough we have
		\[
		\mathbf{1}_{\mathcal{A}} \, \bP\Big[\big| M_{j_1} \big| \ge \tfrac{1}{2} R^{\alpha} \mid \mathcal{F}_\ell\Big] \le 2\exp\Big(-R^{3\alpha -2 - 3\eta}\Big) 
		\]
		uniformly as $n\to \infty$, where $\mathcal{A}\in \mathcal{F}_{\ell}$ is given by~\eqref{eq:def_of_event_no_disc_in_top_cubes_containing_the_disk}.
	\end{lemma}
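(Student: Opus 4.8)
The plan is to prove the bound by a conditional Chernoff estimate. Since Claim~\ref{claim:conditional_upper_bound_on_MGF_one_level_down} holds for every $\lambda\in\bR$ it suffices to bound $\bP[M_{j_1}\ge\tfrac12 R^\alpha\mid\mathcal{F}_\ell]$ on $\mathcal{A}$, the lower tail being identical with $-\lambda$ in place of $\lambda$. For $\lambda>0$ the conditional Markov inequality gives $\mathbf{1}_{\mathcal{A}}\bP[M_{j_1}\ge\tfrac12 R^\alpha\mid\mathcal{F}_\ell]\le e^{-\lambda R^\alpha/2}\,\mathbf{1}_{\mathcal{A}}\bE[\exp(\lambda M_{j_1})\mid\mathcal{F}_\ell]$, so the whole matter reduces to estimating this conditional moment generating function on $\mathcal{A}$ and then optimizing over $\lambda$.

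The backbone is to expose the dyadic tree one level at a time. Writing $M_{j_1}=M_\ell+\sum_{k=\ell+1}^{j_1}(M_k-M_{k-1})$ and using~\eqref{eq:exposure_martingale_after_notations} together with the fact that $\mathcal{U}_k\cup\mathcal{V}_k$ consists exactly of the children of the squares in $\mathcal{V}_{k-1}$ that meet $\bD_n(R)$ (a child lying outside $\bD_n(R)$ contributing $p(Q)=0$), one obtains
\[
M_k-M_{k-1}=\sum_{Q^\prime\in\mathcal{V}_{k-1}}\Big(\sum_{\substack{Q\subset Q^\prime\\ Q\in\mathcal{D}_k}}p(Q)\Delta_n(Q)-p(Q^\prime)\Delta_n(Q^\prime)\Big).
\]
Conditioning on $\mathcal{F}_{k-1}$, the term $-\lambda\sum_{Q^\prime}p(Q^\prime)\Delta_n(Q^\prime)$ factors out; by Proposition~\ref{prop:self_similarity_and_conditional_independence} the remaining expectation factorizes over $Q^\prime\in\mathcal{V}_{k-1}$, and Claim~\ref{claim:conditional_upper_bound_on_MGF_one_level_down} applied to each $Q^\prime$ (whose term $\lambda p(Q^\prime)\Delta_n(Q^\prime)$ cancels the factored-out one) gives
\[
\bE\big[\exp\!\big(\lambda(M_k-M_{k-1})\big)\,\big|\,\mathcal{F}_{k-1}\big]\le\exp\Big(4C_\beta(1+\lambda^2)\sum_{Q^\prime\in\mathcal{V}_{k-1}}\log^2(\mu_n(Q^\prime)+1)\Big).
\]

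Now I would invoke the event $\mathcal{A}$. If $Q^\prime\in\mathcal{V}_{k}$ with $k\ge\ell$ and $\widehat Q\in\mathcal{V}_\ell$ is its ancestor at level $\ell$, then on $\mathcal{A}$
\[
\mu_n(Q^\prime)\le\mu_n(\widehat Q)=\tfrac{n}{4^\ell}+\Delta_n(\widehat Q)\le 4R^2+R^{\alpha-\eta}\le 5R^2,
\]
using $\tfrac n{4^\ell}<4R^2$ from~\eqref{eq:def_of_ell} and $\alpha-\eta<2$; hence $\log^2(\mu_n(Q^\prime)+1)\le C\log^2 R$ there, \emph{uniformly in $n$}. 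Consequently, on $\mathcal{A}$ the right-hand side of the one-level bound is dominated by the deterministic quantity $\exp\!\big(C_\beta(1+\lambda^2)|\mathcal{V}_{k-1}|\log^2R\big)$; being deterministic it commutes with all further conditionings, so iterating the one-level estimate down to level $\ell$ and using $|M_\ell|\le\sum_{Q\in\mathcal{U}_\ell\cup\mathcal{V}_\ell}|\Delta_n(Q)|\le 10R^{\alpha-\eta}$ on $\mathcal{A}$ yields
\[
\mathbf{1}_{\mathcal{A}}\,\bE\big[\exp(\lambda M_{j_1})\mid\mathcal{F}_\ell\big]\le\mathbf{1}_{\mathcal{A}}\,\exp\Big(10\lambda R^{\alpha-\eta}+C_\beta(1+\lambda^2)\log^2R\sum_{k=\ell}^{j_1-1}|\mathcal{V}_k|\Big).
\]
By the Gauss-type bound (Claim~\ref{claim:gauss_type_estimate}, rescaled) $\sum_{k=\ell}^{j_1-1}|\mathcal{V}_k|\le C\sum_{k=\ell}^{j_1-1}2^{k-\ell}\le C\,2^{j_1-\ell}\le C\,R^{2-\alpha+\eta}$, whence $\mathbf{1}_{\mathcal{A}}\,\bE[\exp(\lambda M_{j_1})\mid\mathcal{F}_\ell]\le\mathbf{1}_{\mathcal{A}}\,\exp\big(10\lambda R^{\alpha-\eta}+C_\beta(1+\lambda^2)R^{2-\alpha+\eta}\log^2R\big)$.

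Finally, for $R$ large $10\lambda R^{\alpha-\eta}\le\tfrac14\lambda R^\alpha$, and choosing $\lambda=c_\beta R^{2\alpha-2-\eta}/\log^2R$ — which is $\ge1$ for $R$ large since $\alpha>1$ and $\eta$ is small, so that $1+\lambda^2\le2\lambda^2$ — balances the two remaining terms against $\tfrac14\lambda R^\alpha$ and makes the total exponent at most $-c_\beta R^{3\alpha-2-\eta}/\log^2R$. Since $c_\beta R^{3\alpha-2-\eta}/\log^2R\ge R^{3\alpha-2-3\eta}$ for all $R$ large, combining the two tails gives $\mathbf{1}_{\mathcal{A}}\bP[|M_{j_1}|\ge\tfrac12 R^\alpha\mid\mathcal{F}_\ell]\le 2\exp(-R^{3\alpha-2-3\eta})$. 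I expect the main obstacle to be the level-by-level bookkeeping: correctly identifying which squares enter each increment $M_k-M_{k-1}$, applying the conditional independence and Claim~\ref{claim:conditional_upper_bound_on_MGF_one_level_down} at every step, and — the heart of the matter — arranging that the random coefficients $\log^2(\mu_n(Q^\prime)+1)$ stay bounded by $O(\log^2R)$ uniformly in $n$, which is precisely what conditioning on $\mathcal{A}$ secures.
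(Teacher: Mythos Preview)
Your proof is correct and follows essentially the same approach as the paper: a level-by-level Chernoff bound, using Claim~\ref{claim:conditional_upper_bound_on_MGF_one_level_down} at each step, bounding $\log^2(\mu_n(Q')+1)$ by $O(\log^2 R)$ on $\mathcal{A}$, summing $|\mathcal{V}_k|$ via Claim~\ref{claim:gauss_type_estimate}, and then applying Markov's inequality. The only cosmetic differences are that the paper iterates $\bE[e^{\lambda M_k}\mid\mathcal{F}_\ell]$ directly rather than writing out the increments $M_k-M_{k-1}$, and it chooses $\lambda=R^{2\alpha-2-3\eta}$ rather than your $\lambda=c_\beta R^{2\alpha-2-\eta}/\log^2 R$; both choices yield the stated bound.
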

	\begin{proof}
		We start by proving the bound for the right tail
		\begin{equation}
			\label{eq:large_deviations_upper_bound_martingale_term_right_tail}
			\mathbf{1}_{\mathcal{A}} \, \bP\Big[ M_{j_1}  \ge \tfrac{1}{2} R^{\alpha} \mid \mathcal{F}_\ell\Big] \le \exp\Big(-R^{3\alpha -2 - 3\eta}\Big)
		\end{equation}
		as the complementary bound for the left tail is derived similarly. First, we obtain an upper bound on the moment generating function of $M_k$, with $k> \ell$. By Proposition~\ref{prop:self_similarity_and_conditional_independence} and the tower property of the conditional expectation, for all $\la\in \bR$ we have 
		\begin{align}\label{eq:large_deviations_upper_bound_martingale_term_tower_property} \nonumber
			\bE\big[e^{\la M_{k}} \mid \mathcal{F}_{\ell} \, \big] &= \bE\big[ \bE\big[e^{\la M_{k}} \mid \mathcal{F}_{k-1} \, \big] \mid \mathcal{F}_\ell \, \big] \\ \nonumber &= \bE\bigg[ \bigg(\prod_{j=\ell}^{k-1} \prod_{Q\in \mathcal{U}_j} e^{\la\Delta_n(Q)}\bigg) \, \bE\Big[ \prod_{Q\in \mathcal{U}_{k} \cup \mathcal{V}_k} e^{\la p(Q) \Delta_n(Q)}\mid \mathcal{F}_{k-1} \, \Big] \mid \mathcal{F}_{\ell} \, \bigg] \\ &= \bE\bigg[ \bigg(\prod_{j=\ell}^{k-1} \prod_{Q\in \mathcal{U}_j} e^{\la\Delta_n(Q)}\bigg) \, \prod_{Q^\prime\in \mathcal{V}_{k-1}} \bE\Big[ \prod_{\substack{Q\subset Q^\prime \\ Q\in \mathcal{D}_{k}}} e^{\la p(Q) \Delta_n(Q)}\mid \mathcal{F}_{k-1} \, \Big] \mid \mathcal{F}_{\ell} \, \bigg]  \, .
		\end{align}
		By Claim~\ref{claim:conditional_upper_bound_on_MGF_one_level_down}, we have the bound
		\[
		\bE\Big[ \prod_{\substack{Q\subset Q^\prime \\ Q\in \mathcal{D}_{k}}} e^{\la p(Q) \Delta_n(Q)}\mid \mathcal{F}_{k-1} \, \Big] \le e^{\la p(Q^\prime) \Delta_n(Q^\prime) + C_\beta(1+\la^2)\log^2(\mu_n(Q^\prime)+1)} \, ,
		\]
		which we plug into~\eqref{eq:large_deviations_upper_bound_martingale_term_tower_property} to get
		\begin{equation*}
			\bE\big[e^{\la M_{k}} \mid \mathcal{F}_{\ell} \, \big] \le \bE\Big[ e^{\la M_{k-1}}  \prod_{Q^\prime\in \mathcal{V}_{k-1}} e^{ C_\beta(1+\la^2)\log^2(\mu_n(Q^\prime)+1)} \mid \mathcal{F}_{\ell} \, \Big] \, .
		\end{equation*}
		Since all $Q^\prime\in \mathcal{V}_{k-1}$ is contained in some dyadic square from $\mathcal{U}_\ell \cup \mathcal{V}_\ell$, we conclude from the definition of the event $\mathcal{A}\in \mathcal{F}_\ell$ that 
		\begin{equation}
			\label{eq:bound_on_point_count_on_the_good_event}
			\mu_n(Q^\prime) \le \max_{Q^{\prime\prime}\in \mathcal{U}_\ell \cup \mathcal{V}_\ell} \mu_n(Q^{\prime\prime}) \le 10R^2
		\end{equation}		
		for all $R\ge 2$. This observation implies the bound  
		\begin{equation}
			\label{eq:large_deviations_upper_bound_martingale_term_iteration_step}
			\mathbf{1}_{\mathcal{A}} \, \bE\big[e^{\la M_{k}} \mid \mathcal{F}_{\ell} \, \big] \le \mathbf{1}_{\mathcal{A}} \, \bE\big[e^{\la M_{k-1}} \mid \mathcal{F}_{\ell} \, \big]  \, e^{C|\mathcal{V}_{k-1}| (1+\la^2) \log^2(R)} 
		\end{equation}
		for all $k>\ell$. Recalling that $\mathcal{V}_k$ are the boundary squares in $\mathcal{D}_k$ of a disk $\bD_n(R)$ of radii $R/\sqrt{n} \le 2^{-\ell}$, Claim~\ref{claim:gauss_type_estimate} implies that
		\[
		|\mathcal{V}_{k}| \lesssim 2^{k-\ell}
		\]
		for all $k\ge \ell$, and hence
		\begin{equation}
			\label{eq:bound_on_sum_V_k}
			\sum_{k=\ell}^{j_1} |\mathcal{V}_k| \lesssim 2^{j_1 - \ell} \, .
		\end{equation}
		Furthermore, by our choice~\eqref{eq:def_of_ell} of $\ell\ge1$, we have $|\mathcal{U}_\ell \cup \mathcal{V}_\ell| \le 10$, which implies that on the event $\mathcal{A}$ we have
		\begin{equation}
			\label{eq:bound_on_M_l_assuming_good_event}
			|M_\ell| \le 10 \, R^{\alpha - \eta}\, .
		\end{equation}
		Thus, we can iterate the inequality~\eqref{eq:large_deviations_upper_bound_martingale_term_iteration_step} $j_1 - \ell$ times and obtain that 
		\begin{align}
			\label{eq:large_deviations_upper_bound_martingale_term_final_bound_on_MGF} \nonumber
			\mathbf{1}_{\mathcal{A}} \, \bE\big[e^{\la M_{j_1}} \mid \mathcal{F}_{\ell} \, \big] & \le   \exp\Big(C(1+\la^2) \log^2(R) \big(\sum_{k=\ell}^{j_1} |\mathcal{V}_k| \, \big) \, \Big) \, \mathbf{1}_{\mathcal{A}} \,e^{\la M_{\ell}}  \\\nonumber & \le \exp\Big(C(1+\la^2) \log^2(R) 2^{j_1 - \ell}  + 10\la R^{\alpha -\eta} \,\Big)  \\ &\le \exp\Big(C(1+\la^2) \log^2(R) R^{2-\alpha + \eta} + 10\la R^{\alpha -\eta} \, \Big) \, .
		\end{align}
		Applying Markov's inequality with $\la = R^{2\alpha -2 - 3\eta}$ (with $\eta>0$ small enough), implies that
		\begin{align*}
			\mathbf{1}_{\mathcal{A}} \, \bP\Big[ M_{j_1}  \ge \frac{1}{2} R^{\alpha} \mid \mathcal{F}_\ell\Big] &\le e^{-\la R^\alpha/2} \mathbf{1}_{\mathcal{A}} \, \bE\big[e^{\la M_{j_1}} \mid \mathcal{F}_{\ell} \, \big] \\ & \stackrel{\eqref{eq:large_deviations_upper_bound_martingale_term_final_bound_on_MGF}}{\le} \exp\Big(-\la R^\alpha/2 + C\la^2 \log^2(R) R^{2-\alpha + \eta} +10\la R^{\alpha-\eta} \Big)  \le \exp\Big(-R^{3\alpha -2 -3\eta}\Big)
		\end{align*}
		for all $R$ large enough, which proves~\eqref{eq:large_deviations_upper_bound_martingale_term_right_tail}. To conclude the lemma, we will need the upper bound for the left tail, namely,
		\begin{equation*}
			\mathbf{1}_{\mathcal{A}} \, \bP\Big[ M_{j_1}  \le - \tfrac{1}{2} R^{\alpha} \mid \mathcal{F}_\ell\Big] \le \exp\Big(-R^{3\alpha -2 - 3\eta}\Big) \, .
		\end{equation*}
		Here, Markov's inequality gives 
		\begin{equation*}
			\mathbf{1}_{\mathcal{A}} \, \bP\Big[ M_{j_1}  \le - \tfrac{1}{2} R^{\alpha} \mid \mathcal{F}_\ell\Big] \le e^{-\la R^\alpha /2} \, \mathbf{1}_{\mathcal{A}} \, \bE\big[e^{-\la M_{j_1}} \mid \mathcal{F}_{\ell} \, \big] \, .
		\end{equation*}
		and a similar application of~\eqref{eq:large_deviations_upper_bound_martingale_term_final_bound_on_MGF} gives the desired bound.
	\end{proof}
	\subsection{The boundary term}
	As we already bounded the (conditional) probability that $|M_{j_1}|$ is large, it remains to obtain a similar bound for the boundary term $|B_{j_1}|$, given by the next lemma.
	\begin{lemma}
		\label{lemma:large_deviations_upper_bound_boundary_term}
		For all $\alpha\in(1,2)$ and for all $R$ large enough we have
		\[
		\mathbf{1}_{\mathcal{A}} \, \bP\Big[\big| B_{j_1} \big| \ge \tfrac{1}{2} R^{\alpha} \mid \mathcal{F}_\ell\Big] \le \exp\Big(-R^{3\alpha -2 - 3\eta}\Big)
		\]
		uniformly as $n\to \infty$, where $\mathcal{A}\in \mathcal{F}_{\ell}$ is given by~\eqref{eq:def_of_event_no_disc_in_top_cubes_containing_the_disk} and $B_{j_1}$ is given by~\eqref{eq:large_deviation_upper_bound_def_of_boundary_term}.
	\end{lemma}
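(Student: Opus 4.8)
The plan is to reduce $B_{j_1}$ to a point‑count on a union of dyadic squares and then run, essentially verbatim, the argument of Lemma~\ref{lemma:large_deviations_upper_bound_martingale_term} with the disk $\bD_n(R)$ replaced by that union. First I would note that, since both $\mu_n(Q\cap\bD_n(R))$ and $p(Q)\mu_n(Q)$ lie in $[0,\mu_n(Q)]$, every summand in~\eqref{eq:large_deviation_upper_bound_def_of_boundary_term} has absolute value at most $\mu_n(Q)$, whence
\[
|B_{j_1}|\le\sum_{Q\in\mathcal{V}_{j_1}}\mu_n(Q)=\mu_n(W),\qquad W:=\bigcup_{Q\in\mathcal{V}_{j_1}}Q .
\]
By Claim~\ref{claim:gauss_type_estimate} and the choices of $\ell,j_1$ one has $|\mathcal{V}_{j_1}|\lesssim 2^{j_1-\ell}\lesssim R^{2-\alpha+\eta}$, and since $n\,4^{-j_1}\lesssim R^{2\alpha-2-2\eta}$, we get $n\,{\tt Leb}(W)=n\,|\mathcal{V}_{j_1}|\,4^{-j_1}\lesssim R^{2-\alpha+\eta}R^{2\alpha-2-2\eta}=R^{\alpha-\eta}$. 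Writing $\mu_n(W)=n\,{\tt Leb}(W)+\Delta_n(W)$, for $R$ large this gives $\{|B_{j_1}|\ge\tfrac12 R^\alpha\}\subseteq\{\Delta_n(W)\ge\tfrac14 R^\alpha\}$, so it is enough to bound $\mathbf{1}_{\mathcal{A}}\,\bP[\Delta_n(W)\ge\tfrac14 R^\alpha\mid\mathcal{F}_\ell]$.

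To do so, for $P\in\mathcal{D}_k$ with $k\le j_1$ put $q(P):={\tt Leb}(P\cap W)/{\tt Leb}(P)$. Since $W$ is a union of level‑$j_1$ squares, $q(P)\in[0,1]$, the averaging identity $\tfrac14\sum_{P'\subset P,\,P'\in\mathcal{D}_{k+1}}q(P')=q(P)$ holds (the analogue of~\eqref{eq:avarage_of_relative_area_is_relative_area_above}), and $q(Q)=\mathbf{1}_{\{Q\in\mathcal{V}_{j_1}\}}$ for $Q\in\mathcal{D}_{j_1}$; these are the only properties of $p(\cdot)$ used in the proof of Claim~\ref{claim:conditional_upper_bound_on_MGF_one_level_down}, so that claim holds verbatim with $p$ replaced by $q$. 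Moreover $q(P)>0$ forces $P$ to contain some $Q\in\mathcal{V}_{j_1}$, hence $P\in\mathcal{V}_k$, so $\#\{P\in\mathcal{D}_k:q(P)>0\}\le|\mathcal{V}_k|\lesssim 2^{k-\ell}$ for all $\ell\le k\le j_1$; at level $\ell$ only squares in $\mathcal{V}_\ell$ carry mass, with $q(P)\le|\mathcal{V}_{j_1}|\,4^{-(j_1-\ell)}\lesssim 2^{-(j_1-\ell)}$, so on $\mathcal{A}$, using $|\mathcal{V}_\ell|\le 10$ and~\eqref{eq:bound_on_point_count_on_the_good_event}, the quantity $\widetilde M_k:=\sum_{P\in\mathcal{D}_k}q(P)\Delta_n(P)$ satisfies $\mathbf{1}_{\mathcal{A}}|\widetilde M_\ell|\lesssim R^{2\alpha-2-2\eta}$ while $\mathbf{1}_{\mathcal{A}}\,\mu_n(P)\le 10R^2$ for every dyadic $P$ met along the way.

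Now $\widetilde M_{j_1}=\sum_{Q\in\mathcal{V}_{j_1}}\Delta_n(Q)=\Delta_n(W)$, and $\{\widetilde M_k\}$ is the exposure martingale of $\Delta_n(W)$. Running the iteration of~\eqref{eq:large_deviations_upper_bound_martingale_term_tower_property}–\eqref{eq:large_deviations_upper_bound_martingale_term_final_bound_on_MGF} — tower property, Claim~\ref{claim:conditional_upper_bound_on_MGF_one_level_down} for $q$, the bound $\mathbf{1}_{\mathcal{A}}\mu_n(P)\le 10R^2$, the count $\#\{q>0\}\lesssim 2^{k-\ell}$ at level $k$, and the base bound on $\widetilde M_\ell$ — yields, for every $\lambda\ge0$,
\[
\mathbf{1}_{\mathcal{A}}\,\bE\big[e^{\lambda\widetilde M_{j_1}}\mid\mathcal{F}_\ell\big]\le\exp\Big(C\lambda R^{2\alpha-2-2\eta}+C(1+\lambda^2)\log^2(R)\,R^{2-\alpha+\eta}\Big).
\]
Since $\alpha<2$ the linear‑in‑$\lambda$ terms are negligible compared with $\lambda R^\alpha$ (and, for $\eta$ small, the additive $C\log^2(R)R^{2-\alpha+\eta}$ is negligible because $2-\alpha+\eta<3\alpha-2$ when $\alpha>1$), so Markov's inequality with $\lambda=c\,R^{2\alpha-2-\eta}/\log^2(R)$ — admissible since Lemma~\ref{lemma:moment_generating_funtion_upper_bound_top_level_no_assumption}, and hence Claim~\ref{claim:conditional_upper_bound_on_MGF_one_level_down}, imposes no restriction on $\lambda$ — gives $\mathbf{1}_{\mathcal{A}}\,\bP[\Delta_n(W)\ge\tfrac14 R^\alpha\mid\mathcal{F}_\ell]\le\exp(-cR^{3\alpha-2-\eta}/\log^2 R)\le\exp(-R^{3\alpha-2-3\eta})$ for $R$ large, which together with the first paragraph proves the lemma.

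The only conceptual point — everything else being bookkeeping parallel to Lemma~\ref{lemma:large_deviations_upper_bound_martingale_term} — is that the refinement must be \emph{stopped at level $j_1$}: at that scale the total boundary mass $\mu_n(W)$ is already of order $R^{\alpha-\eta}\ll R^\alpha$, whereas continuing to the bottom of the tree would replace the geometric sum $\sum_{k\le j_1}|\mathcal{V}_k|\asymp R^{2-\alpha+\eta}$ by $\sum_{k}|\mathcal{V}_k|\asymp R$, which for $\alpha>1$ is too large to yield the exponent $R^{3\alpha-2}$. One also has to verify, as above, that the relative‑area weights $q$ inherit the two structural features that power Claim~\ref{claim:conditional_upper_bound_on_MGF_one_level_down} and that $\{q>0\}$ at level $k$ sits inside $\mathcal{V}_k$, so the Gauss‑type bound $|\mathcal{V}_k|\lesssim 2^{k-\ell}$ can be reused.
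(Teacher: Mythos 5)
Your proof is correct and follows essentially the same route as the paper's: your weight $q(P)={\tt Leb}(P\cap W)/{\tt Leb}(P)$ coincides exactly with the paper's proportion weight $r_j(P)$ of $\mathcal{V}_{j_1}$-squares below $P$, your $\widetilde M_k$ is just the mean-centered version of the paper's iterated quantity $\sum_Q r_j(Q)\mu_n(Q)$, and the tower-property iteration, base-level bound on $\mathcal{A}$, Gauss-type boundary count, and choice of $\lambda\asymp R^{2\alpha-2}$ in Markov's inequality match the paper step for step. The one mild streamlining is your observation that Claim~\ref{claim:conditional_upper_bound_on_MGF_one_level_down} applies verbatim with $q$ in place of $p$, whereas the paper re-derives the corresponding one-step MGF bound directly from Lemma~\ref{lemma:moment_generating_funtion_upper_bound_top_level_no_assumption}.
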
 
	\begin{proof}
		By the triangle inequality, we have
		\[
		\big| B_{j_1} \big| \le \sum_{Q\in \mathcal{V}_{j_1}} \mu_n\big(Q\cap \bD_n(R)\big) + p(Q) \mu_n(Q) \le 2 \sum_{Q\in \mathcal{V}_{j_1}} \mu_n(Q)
		\]
		which implies that 
		\[
		\Big\{\big| B_{j_1} \big| \ge \tfrac{1}{2} R^{\alpha} \Big\} \subset \Big\{\sum_{Q\in \mathcal{V}_{j_1}} \mu_n(Q) \ge \tfrac{1}{4} R^\alpha \Big\} \, .
		\]
		Therefore, it suffices to prove the bound
		\begin{equation}
			\label{eq:large_deviation_upper_bound_boundary_term_only_dyadic}
			\mathbf{1}_{\mathcal{A}} \, \bP\Big[\sum_{Q\in \mathcal{V}_{j_1}} \mu_n(Q) \ge \tfrac{1}{4} R^\alpha \mid \mathcal{F}_\ell\Big] \le \exp\Big(-R^{3\alpha -2 - 3\eta}\Big) \, .
		\end{equation}
		For $Q^\prime\in \mathcal{D}_j$ with $\ell \le j \le j_1$, we denote by
		\begin{equation*}
			r_{j}(Q^\prime) = \frac{\#\big\{Q\in \mathcal{V}_{j_1} \ \text{such that } \, Q\subset Q^\prime\big\}}{4^{j_1-j}}
		\end{equation*}
		the proportion of boundary squares of $\mathcal{V}_{j_1}$ which are inside $Q^\prime$. Clearly, for $Q\in \mathcal{V}_{j_1}$ we have $r_{j_1}(Q) = 1$. Furthermore, for $Q^\prime\in \mathcal{D}_\ell$ Claim~\ref{claim:gauss_type_estimate} implies that
		\[
		\#\big\{Q\in \mathcal{V}_{j_1} \ \text{such that } \, Q\subset Q^\prime\big\} \lesssim 2^{j_1-\ell} \, ,
		\]
		which implies that
		\begin{equation}
			\label{eq:bound_on_r_ell_Q}
			r_{\ell}(Q^\prime) \lesssim 2^{\ell - j_1} \lesssim R^{\alpha-2-\eta} \, .
		\end{equation}
		Given a boundary square $Q\in \mathcal{V}_j$, then at least one of its children in $\mathcal{D}_{j+1}$ is also a boundary square and we have
		\begin{equation}
			\label{eq:average_of_proportions_is_proportion_of_top}
			r_j(Q^\prime) = \frac{1}{4} \sum_{\substack{Q\subset Q^\prime \\ Q\in \mathcal{D}_{j+1}}} r_{j+1}(Q) \, .
		\end{equation} 
		To prove~\eqref{eq:large_deviation_upper_bound_boundary_term_only_dyadic}, we derive (similarly as in the proof of Lemma~\ref{lemma:large_deviations_upper_bound_martingale_term}) an upper bound on the moment generating function. Indeed, by applying Lemma~\ref{lemma:moment_generating_funtion_upper_bound_top_level_no_assumption} with
		\[
		\boldsymbol{\la} = \la \cdot \big(r_{j}(Q_1), r_j(Q_2), r_j(Q_3), r_j(Q_4)\big) 
		\]
		together with Proposition~\ref{prop:self_similarity_and_conditional_independence}, we get the conditional lower bound 
		\begin{align*}
			\bE\Big[\prod_{\substack{Q\subset Q^\prime \\ Q\in \mathcal{D}_{j+1}}} e^{\la r_{j+1}(Q) \mu_n(Q)}\mid \mathcal{F}_{j} \, \Big] \le  \prod_{\substack{Q\subset Q^\prime \\ Q\in \mathcal{D}_{j+1}}} &e^{\la r_{j+1}(Q) \frac{\mu_n(Q^\prime)}{4} + C(1+\la^2) \log^2(\mu_{n}(Q^\prime) + 1)} \\ & \stackrel{\eqref{eq:average_of_proportions_is_proportion_of_top}}{\le} e^{\la r_j(Q^\prime) \mu_n(Q^\prime) + C(1+\la^2) \log^2(\mu_{n}(Q^\prime) + 1)} 
		\end{align*}
		for all $Q^\prime \in \mathcal{V}_j$.
		On the event $\mathcal{A}$, by~\eqref{eq:bound_on_point_count_on_the_good_event} we further have
		\[
		\mathbf{1}_{\mathcal{A}} \, \bE\Big[\prod_{\substack{Q\subset Q^\prime \\ Q\in \mathcal{D}_{j+1}}} e^{\la r_{j+1}(Q) \mu_n(Q)}\mid \mathcal{F}_{j} \, \Big] \le \mathbf{1}_{\mathcal{A}} \, e^{\la r_j(Q^\prime) \mu_n(Q^\prime) + C(1+\la^2) \log^2(R)} 
		\]
		for all $j\ge \ell$. Iterating this inequality $j_1-\ell$ times, we arrive at
		\begin{align*}
			\mathbf{1}_{\mathcal{A}}& \, \bE\Big[\prod_{Q\in \mathcal{V}_{j_1}} e^{\la \mu_n(Q)} \mid \mathcal{F}_{\ell} \, \Big]  \stackrel{\text{Prop.}~\ref{prop:self_similarity_and_conditional_independence}}{=} \mathbf{1}_{\mathcal{A}} \, \bE\Big[\prod_{Q^\prime\in \mathcal{V}_{j_1-1}} \bE\Big[\prod_{\substack{Q\subset Q^\prime \\ Q\in \mathcal{V}_{j}}} e^{\la \mu_n(Q)}\mid \mathcal{F}_{j_1-1} \,\Big] \mid \mathcal{F}_{\ell} \, \Big] \\
			& \le e^{C|\mathcal{V}_{j_1 -1}| (1+\la^2) \log^2(R)} \, \mathbf{1}_{\mathcal{A}} \, \bE\Big[\prod_{Q\in \mathcal{V}_{j_1-1}} e^{\la r_{j_1-1}(Q)\mu_n(Q)} \mid \mathcal{F}_{\ell} \, \Big] \\
			& \le \ldots \le  e^{C (1+ \la^2) \log^2(R) \sum_{j=\ell}^{j_1} |\mathcal{V}_j|} \, \mathbf{1}_\mathcal{A} \, \prod_{Q\in \mathcal{V}_{\ell}} e^{\la r_\ell(Q) \mu_n(Q)}  \, .
		\end{align*}
		Recall that by our choice~\eqref{eq:def_of_ell} of $\ell\ge 1$, we have $|\mathcal{U}_\ell \cup \mathcal{V}_\ell| \le 10$. Furthermore, combining~\eqref{eq:bound_on_point_count_on_the_good_event} with~\eqref{eq:bound_on_r_ell_Q}, we see that on the event $\mathcal{A}$
		\[
		r_\ell(Q) \mu_n(Q) \stackrel{\eqref{eq:bound_on_point_count_on_the_good_event}}{\lesssim}  r_\ell(Q) R^2  \stackrel{\eqref{eq:bound_on_r_ell_Q}}{\lesssim} \frac{R^2}{R^{2-\alpha+\eta}} = R^{\alpha -\eta}
		\] 
		for all $Q\in \mathcal{V}_\ell$. Putting everything together, we obtain the bound
		\begin{equation*}
			\mathbf{1}_{\mathcal{A}} \, \bE\Big[\prod_{Q\in \mathcal{V}_{j_1}} e^{\la \mu_n(Q)} \mid \mathcal{F}_{\ell} \, \Big] \le e^{C (1+ \la^2) \log^2(R) \sum_{j=\ell}^{j_1} |\mathcal{V}_j| + C\la R^{\alpha -\eta}}  \stackrel{\eqref{eq:bound_on_sum_V_k}}{\le}  e^{C(1+\la^2) \log^2(R) 2^{j_1-\ell} + C\la R^{\alpha -\eta}} 
		\end{equation*}
		for some $C=C(\beta)>0$. Applying this bound with $\la = R^{2\alpha -2 -3\eta}$ with $\eta>0$ arbitrarily small, Markov's inequality implies that
		\begin{align*}
			\mathbf{1}_{\mathcal{A}} \, \bP\Big[\sum_{Q\in \mathcal{V}_{j_1}} \mu_n(Q) \ge \tfrac{1}{4} R^\alpha \mid \mathcal{F}_\ell\Big] &\le \mathbf{1}_{\mathcal{A}} \, e^{-\la R^\alpha/4} \, \bE\Big[\prod_{Q\in \mathcal{V}_{j_1}} e^{\la \mu_n(Q)} \mid \mathcal{F}_{\ell} \, \Big] \\ & \le \exp\Big(-\la R^{\alpha}/4 +C(1+\la^2) \log^2(R) R^{2-\alpha +\eta} + C\la R^{\alpha -\eta}\Big) \\ & \le \exp\Big(-R^{3\alpha-2-2\eta}\Big)
		\end{align*}
		for all $R$ large enough. This proves~\eqref{eq:large_deviation_upper_bound_boundary_term_only_dyadic} and we are done.
	\end{proof}
	\subsection{Proof of Theorem~\ref{thm:Micro_JLM}: the upper bound for \texorpdfstring{$1<\alpha<2$}{1 < alpha < 2}}
	We have
	\begin{align*}
		\bP\Big[\big|\Delta_n\big(\bD_n(R)\big) \big| \ge R^{\alpha}\Big] & \le \bP\Big[\big|\Delta_n\big(\bD_n(R)\big) \big| \ge R^{\alpha}, \, \mathcal{A} \, \Big] + \bP\big[\mathcal{A}^c\big] \\
		& \stackrel{\text{Corollary}~\ref{corollary:concentration_of_four_cubes_containing_the_disk}}{\le} \bP\Big[\big|\Delta_n\big(\bD_n(R)\big) \big| \ge R^{\alpha}, \, \mathcal{A} \, \Big] + \exp\Big(-R^{2\alpha - 3\eta}\Big) \, .
	\end{align*}
	To bound the probability on the right-hand side, we recall the decomposition $\Delta_n\big(\bD_n(R)\big) = M_{j_1} + B_{j_1}$ (see~\eqref{eq:large_deviation_upper_bound_def_of_boundary_term}) and note that
	\begin{align*}
		\bP\Big[\big|\Delta_n\big(\bD_n(R)\big) \big| \ge R^{\alpha}, \, \mathcal{A} \, \Big]  &\le \bP\Big[\big|M_{j_1} \big| \ge \tfrac{1}{2}R^{\alpha}, \, \mathcal{A} \, \Big] + \bP\Big[\big|B_{j_1} \big| \ge \tfrac{1}{2}R^{\alpha}, \, \mathcal{A} \, \Big] \\ &= \bE\Big[\mathbf{1}_{\mathcal{A}} \, \bP\Big[\, \big| M_{j_1} \big| \ge \tfrac{1}{2} R^{\alpha} \mid \mathcal{F}_\ell \, \Big] \, \Big] + \bE\Big[\mathbf{1}_{\mathcal{A}} \, \bP\Big[ \, \big| B_{j_1} \big| \ge \tfrac{1}{2} R^{\alpha} \mid \mathcal{F}_\ell\, \Big]\Big] \, .
	\end{align*}
	Combining the above with Lemma~\ref{lemma:large_deviations_upper_bound_martingale_term} and Lemma~\ref{lemma:large_deviations_upper_bound_boundary_term}, we see that for $R$ large enough 
	\begin{equation*}
		\bP\Big[\big|\Delta_n\big(\bD_n(R)\big) \big| \ge R^{\alpha}\Big]  \le 3 \exp\Big(-R^{3\alpha - 2 - 3\eta}\Big) + \exp\Big(-R^{2\alpha - 3\eta}\Big) \le \exp\Big(-R^{3\alpha - 2 - 4\eta}\Big) \, .
	\end{equation*}
	As $\eta>0$ can be made arbitrarily small, we got the desired upper bound for $1<\alpha<2$.
	\qed
	\section{Upper bound for \texorpdfstring{$\frac{1}{2} < \alpha < 1$}{1/2 < alpha < 1}}
	\label{sec:UB_moderate_deviations}
	The proof of the upper bound in the regime $\frac{1}{2}<\alpha<1$ is similar to the one from Section~\ref{sec:UB_large_deviations} (upper bound for $1<\alpha<2$), in that we will use the inherent ``tree structure" to obtain upper bound on corresponding moment generating functions. To deal with the boundary term we will need a different argument, as explained below. First, we formalize the following claim which serves as the induction step in this section, analogous to Claim~\ref{claim:conditional_upper_bound_on_MGF_one_level_down} from Section~\ref{sec:UB_large_deviations}.
	\begin{claim}
		\label{claim:conditional_upper_bound_MGF_one_level_small_input}
		Let $Q^\prime\in \mathcal{D}_j$ for some $j\ge 1$. For all $\la\in \bR$ with $$|\la|\le (\log (\mu_n(Q^\prime)+1))^{-2}\, ,$$ we have
		\begin{equation*}
			\bE\Bigg[\prod_{\substack{Q\subset Q^\prime \\ Q\in \mathcal{D}_{j+1}}} \exp\Big(\la \, p(Q)  \Delta_n(Q)\Big)  \, \Big| \, \mathcal{F}_j \, \Bigg] \le \exp\bigg(\la \, p(Q^\prime) \Delta_n(Q^\prime) + 4C_\beta \la^2\log^4(\mu_n(Q^\prime)+1)\bigg)
		\end{equation*}
		where $C_\beta>0$ is the constant from Lemma~\ref{lemma:upper_bound_on_MGF_top_level_small_input}.
	\end{claim}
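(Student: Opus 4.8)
The plan is to run the argument behind Claim~\ref{claim:conditional_upper_bound_on_MGF_one_level_down} essentially verbatim, only feeding in the \emph{small-input} moment generating function estimate of Lemma~\ref{lemma:upper_bound_on_MGF_top_level_small_input} in place of its unrestricted counterpart, Lemma~\ref{lemma:moment_generating_funtion_upper_bound_top_level_no_assumption}. Write $Q_1,\ldots,Q_4\in\mathcal{D}_{j+1}$ for the four children of $Q^\prime$ and set $m=\mu_n(Q^\prime)$. By Proposition~\ref{prop:self_similarity_and_conditional_independence}, conditionally on $\mathcal{F}_j$ the vector $\big(\mu_n(Q_1),\ldots,\mu_n(Q_4)\big)$ has the law of the top-level count vector of a hierarchical gas with $m$ points; equivalently, the centered counts $X_i:=\mu_n(Q_i)-\tfrac{m}{4}$ are distributed as the top-level discrepancy vector $\mathbf{X}$ for a gas of $m$ points.

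First I would unwind the product. Since ${\tt Leb}(Q_i)=\tfrac14{\tt Leb}(Q^\prime)$ and $\bE[\mu_n(Q_i)\mid\mathcal{F}_j]=\tfrac m4$, one has $\Delta_n(Q_i)=\tfrac14\Delta_n(Q^\prime)+X_i$, and hence, using the area-averaging identity~\eqref{eq:avarage_of_relative_area_is_relative_area_above},
\begin{equation*}
\prod_{\substack{Q\subset Q^\prime\\ Q\in\mathcal{D}_{j+1}}}\exp\big(\la\,p(Q)\Delta_n(Q)\big)=\exp\big(\la\,p(Q^\prime)\Delta_n(Q^\prime)\big)\,\prod_{i=1}^4\exp\big(\la\,p(Q_i)X_i\big),
\end{equation*}
where the first factor on the right-hand side is $\mathcal{F}_j$-measurable. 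Taking $\bE[\,\cdot\mid\mathcal{F}_j]$ and invoking the self-similarity recalled above, the surviving conditional expectation is the top-level moment generating function of an $m$-point gas, namely $\bE\big[\exp\big(\sum_{i=1}^4(\la\,p(Q_i))X_i\big)\big]$, evaluated at $\boldsymbol{\la}=\la\cdot\big(p(Q_1),p(Q_2),p(Q_3),p(Q_4)\big)$.

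It remains to check that $\boldsymbol{\la}$ is admissible for Lemma~\ref{lemma:upper_bound_on_MGF_top_level_small_input} and to read off the constant. Because $p(Q_i)\in[0,1]$ and $\sum_{i=1}^4 p(Q_i)^2\le\sum_{i=1}^4 p(Q_i)=4p(Q^\prime)\le4$, we get $\|\boldsymbol{\la}\|^2=\la^2\sum_i p(Q_i)^2\le4\la^2$, so that $\|\boldsymbol{\la}\|\le2|\la|\le2\big(\log(m+1)\big)^{-2}$; this lies inside the ball on which the proof of Lemma~\ref{lemma:upper_bound_on_MGF_top_level_small_input} in fact establishes the quadratic bound (the Taylor remainder there is controlled on the ball of radius $2(\log m)^{-2}$), so the lemma applies and yields
\begin{equation*}
\log\bE\Big[\exp\Big(\textstyle\sum_{i=1}^4(\la\,p(Q_i))X_i\Big)\Big]\le C_\beta\log^4(m+1)\,\|\boldsymbol{\la}\|^2\le4C_\beta\,\la^2\log^4\big(\mu_n(Q^\prime)+1\big),
\end{equation*}
using $\sum_i p(Q_i)^2\le4$ once more; multiplying back the factored-out term gives exactly the asserted inequality.

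I do not expect a genuine obstacle here: all of the analytic content sits in Lemma~\ref{lemma:upper_bound_on_MGF_top_level_small_input} and in the conditional self-similarity of Proposition~\ref{prop:self_similarity_and_conditional_independence}, and the present claim is their routine combination. The only points requiring a little care are bookkeeping: the degenerate range $m=\mu_n(Q^\prime)\le2$, where Lemma~\ref{lemma:upper_bound_on_MGF_top_level_small_input} is stated only for $n\ge3$, should be disposed of by hand (there $|X_i|\le2$, so the inequality is immediate); and one must be willing to absorb the harmless factor of two incurred in passing from the hypothesis $|\la|\le(\log(\mu_n(Q^\prime)+1))^{-2}$ to the constraint $\|\boldsymbol{\la}\|\le2(\log(\mu_n(Q^\prime)+1))^{-2}$, which the slack in Lemma~\ref{lemma:upper_bound_on_MGF_top_level_small_input} provides.
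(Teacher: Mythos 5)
Your proof is correct and follows the same route as the paper: reduce via Proposition~\ref{prop:self_similarity_and_conditional_independence} and the area-averaging identity~\eqref{eq:avarage_of_relative_area_is_relative_area_above} to the top-level moment generating function, then apply Lemma~\ref{lemma:upper_bound_on_MGF_top_level_small_input} with $\boldsymbol{\la}=\la\big(p(Q_1),\ldots,p(Q_4)\big)$ and use $\sum_i p(Q_i)^2\le 4$. The two bookkeeping points you flag — the factor of $2$ between $|\la|$ and $\|\boldsymbol{\la}\|$ (absorbed because the proof of Lemma~\ref{lemma:upper_bound_on_MGF_top_level_small_input} actually controls the second derivative on the ball $\|\boldsymbol{t}\|\le a=2(\log n)^{-2}$), and the degenerate case $\mu_n(Q')\le 2$ — are genuine and correctly disposed of; the paper silently glosses over both.
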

	\begin{proof}
		The proof is similar to that of Claim~\ref{claim:conditional_upper_bound_on_MGF_one_level_down}. By Proposition~\ref{prop:self_similarity_and_conditional_independence}, conditional of $\mathcal{F}_j$, the random variables
		\[
		\Big\{ \mu_n(Q) \Big\}_{\substack{Q\subset Q^\prime \\ Q\in \mathcal{D}_{j+1}}}
		\]
		have the same distribution as the point count at the top level of the tree, with $\mu_n(Q^\prime)$ initial particles. Furthermore, utilizing~\eqref{eq:avarage_of_relative_area_is_relative_area_above}, we see that
		\[
		\sum_{Q\subset Q^{\prime}} p(Q) {\tt Leb}(Q) = \frac{{\tt Leb}(Q^\prime)}{4}\sum_{Q\subset Q^{\prime}} p(Q) =  p(Q^\prime) {\tt Leb}(Q^\prime)\, .
		\]
		Hence, we can factor out the expectation terms, and the claim would follow once we show that
		\begin{equation*}
			\bE\Bigg[\prod_{\substack{Q\subset Q^\prime \\ Q\in \mathcal{D}_{j+1}}} \exp\Big(\la  p(Q)  \mu_n(Q)\Big)  \, \Big| \, \mathcal{F}_j \, \Bigg]  \le \exp\bigg(\la p(Q^\prime) \mu_n(Q^\prime) + 4C_\beta \la^2\log^4(\mu_n(Q^\prime)+1)\bigg) \, .
		\end{equation*}
		In turn, the inequality above follows from a simple application of Lemma~\ref{lemma:upper_bound_on_MGF_top_level_small_input}, which we may apply as $|\la| \le (\log (\mu_n(Q^\prime)+1))^{-2}$. 	
	\end{proof}
	\subsection{Decomposition of \texorpdfstring{$\Delta_n\big(\bD_n(R)\big)$}{ the discrepancy}}
	Let $j_2 > \ell$ be an integer such that 
	\begin{equation}
		\label{eq:def_of_j_2}
		n^{-1/2} \le 2^{-j_2} < 2n^{-1/2} 
	\end{equation}
	and recall that
	\begin{equation*}
		M_{j_2} = \bE\big[\Delta_n\big(\bD_n(R)\big) \mid \mathcal{F}_{j_2}\big] = \sum_{j=\ell}^{j_2} \sum_{Q\in \mathcal{U}_j} \Delta_n(Q) + \sum_{Q\in \mathcal{V}_{j_2}} p(Q) \Delta_n(Q) \, .
	\end{equation*}
	Here, as before, $\mathcal{U}_j$ are the maximal squares in $\mathcal{D}_j$ and $\mathcal{V}_j$ are the boundary squares (see Definition~\ref{definition:maximal_and_boundary_cubes}). We denote the boundary term by
	\begin{align}
		\label{eq:upper_bound_moderate_deviations_def_of_boundary_term}
		\nonumber
		B_{j_2} = \Delta_n\big(\bD_n(R)\big) - M_{j_2} &= \sum_{Q\in \mathcal{V}_{j_2}} \Delta_n\big(\bD_n(R) \cap Q\big) -  p(Q) \Delta_n(Q) \\ &= \sum_{Q\in \mathcal{V}_{j_2}} \mu_n\big(\bD_n(R) \cap Q\big) -  p(Q) \mu_n(Q) \, .
	\end{align}
	\begin{lemma}
		\label{lemma:upper_bound_moderate_deviations_martingale_term}
		For all $\alpha\in\big(\frac{1}{2},1\big)$, $\eta>0$ and for all $R$ large enough, we have
		\[
		\mathbf{1}_{\mathcal{A}} \, \bP\Big[\big| M_{j_2} \big| \ge \tfrac{1}{2} R^{\alpha} \mid \mathcal{F}_\ell\Big] \le 2\exp\Big(-R^{2\alpha -1 - 3\eta}\Big) 
		\]
		uniformly as $n\to \infty$, where $\mathcal{A}\in \mathcal{F}_{\ell}$ is given by~\eqref{eq:def_of_event_no_disc_in_top_cubes_containing_the_disk}.
	\end{lemma}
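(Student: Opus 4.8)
The plan is to repeat the argument of Lemma~\ref{lemma:large_deviations_upper_bound_martingale_term} almost verbatim, with two modifications: the level $j_1$ is replaced by the microscopic level $j_2$ of~\eqref{eq:def_of_j_2}, and the unrestricted moment generating function estimate of Lemma~\ref{lemma:moment_generating_funtion_upper_bound_top_level_no_assumption} is replaced by its small-input counterpart, used through Claim~\ref{claim:conditional_upper_bound_MGF_one_level_small_input} in place of Claim~\ref{claim:conditional_upper_bound_on_MGF_one_level_down}. First I would prove the right-tail estimate
\[
\mathbf{1}_{\mathcal{A}}\,\bP\Big[M_{j_2}\ge\tfrac12 R^\alpha\mid\mathcal{F}_\ell\Big]\le\exp\Big(-R^{2\alpha-1-3\eta}\Big),
\]
the left tail being symmetric (replace $\la$ by $-\la$ in the computation below), and the factor $2$ in the statement coming from the union bound over the two tails. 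As in~\eqref{eq:large_deviations_upper_bound_martingale_term_tower_property}, the tower property together with Proposition~\ref{prop:self_similarity_and_conditional_independence} gives, for $\ell<k\le j_2$,
\[
\bE\big[e^{\la M_k}\mid\mathcal{F}_\ell\big]=\bE\bigg[\Big(\prod_{j=\ell}^{k-1}\prod_{Q\in\mathcal{U}_j}e^{\la\Delta_n(Q)}\Big)\,\prod_{Q'\in\mathcal{V}_{k-1}}\bE\Big[\prod_{\substack{Q\subset Q'\\ Q\in\mathcal{D}_k}}e^{\la p(Q)\Delta_n(Q)}\mid\mathcal{F}_{k-1}\Big]\,\Big|\,\mathcal{F}_\ell\bigg],
\]
into which I would substitute the one-step bound of Claim~\ref{claim:conditional_upper_bound_MGF_one_level_small_input}.

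Then I would iterate the resulting inequality $j_2-\ell$ times, exactly as in~\eqref{eq:large_deviations_upper_bound_martingale_term_iteration_step}--\eqref{eq:large_deviations_upper_bound_martingale_term_final_bound_on_MGF}. On the event $\mathcal{A}$ one has $\mu_n(Q')\le 10R^2$ for every square met in the iteration (this is~\eqref{eq:bound_on_point_count_on_the_good_event}), so $\log^4(\mu_n(Q')+1)\lesssim\log^4 R$; moreover Claim~\ref{claim:conditional_upper_bound_MGF_one_level_small_input} is applicable provided $|\la|\le c(\log R)^{-2}$, and since $p(Q)\le 1$ the shifted parameters $\la p(Q)$ satisfy the same restriction. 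Bounding $|\mathcal{V}_k|\lesssim 2^{k-\ell}$ by Claim~\ref{claim:gauss_type_estimate}, so that $\sum_{k=\ell}^{j_2}|\mathcal{V}_k|\lesssim 2^{j_2-\ell}\asymp R$ by the choices~\eqref{eq:def_of_ell} and~\eqref{eq:def_of_j_2}, and using $|M_\ell|\le 10R^{\alpha-\eta}$ on $\mathcal{A}$ (since $|\mathcal{U}_\ell\cup\mathcal{V}_\ell|\le 10$ and by the definition~\eqref{eq:def_of_event_no_disc_in_top_cubes_containing_the_disk} of $\mathcal{A}$), I would obtain
\[
\mathbf{1}_{\mathcal{A}}\,\bE\big[e^{\la M_{j_2}}\mid\mathcal{F}_\ell\big]\le\exp\Big(C\la^2\log^4(R)\,R+10\la R^{\alpha-\eta}\Big)\qquad\text{for all }\ |\la|\le c(\log R)^{-2}.
\]
To finish, I would apply Markov's inequality with $\la=R^{\alpha-1-\eta}$, which is admissible precisely because $\alpha<1$ forces $R^{\alpha-1-\eta}=o((\log R)^{-2})$; the exponent becomes $-\tfrac12 R^{2\alpha-1-\eta}+CR^{2\alpha-1-2\eta}\log^4 R+10R^{2\alpha-1-2\eta}$, whose leading term dominates, yielding $\exp(-R^{2\alpha-1-2\eta})\le\exp(-R^{2\alpha-1-3\eta})$ for $R$ large. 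Carrying out the same computation with $-\la$ and taking a union bound gives the factor $2$. All inputs (Claim~\ref{claim:conditional_upper_bound_MGF_one_level_small_input}, Claim~\ref{claim:gauss_type_estimate}, Corollary~\ref{corollary:concentration_of_four_cubes_containing_the_disk}) are uniform in $n$, hence so is the conclusion.

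Beyond the transcription from Section~\ref{sec:UB_large_deviations}, the only genuine point requiring care is the compatibility of the iteration with the \emph{restricted} regime of Claim~\ref{claim:conditional_upper_bound_MGF_one_level_small_input}, and this is exactly where $\alpha<1$ enters. On one hand it keeps the optimal tilt $\la\asymp R^{\alpha-1}$ polynomially small, so the small-input bound applies at every step; on the other hand it makes the cumulative error $\big(\sum_k|\mathcal{V}_k|\big)\cdot C\la^2\log^4 R\asymp R^{2\alpha-1}\log^4 R$ from the $\asymp R$ boundary squares of the right order. In particular it is essential that Claim~\ref{claim:conditional_upper_bound_MGF_one_level_small_input} carries \emph{no} additive constant term in its error — unlike Claim~\ref{claim:conditional_upper_bound_on_MGF_one_level_down} — since such a term, summed over $\asymp R$ squares, would produce an $R^{1+o(1)}$ contribution that overwhelms the target exponent $R^{2\alpha-1}$ whenever $\alpha<1$.
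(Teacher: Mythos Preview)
Your proposal is correct and follows essentially the same approach as the paper's proof: iterate the small-input one-step estimate of Claim~\ref{claim:conditional_upper_bound_MGF_one_level_small_input} down from level $\ell$ to $j_2$, control the accumulated error via $\sum_k|\mathcal{V}_k|\lesssim R$ and $\log^4(\mu_n(Q')+1)\lesssim\log^4 R$ on $\mathcal{A}$, then apply Markov with a small tilt. The only cosmetic difference is that the paper takes $\la=R^{\alpha-1-2\eta}$ whereas you take $\la=R^{\alpha-1-\eta}$; both choices satisfy the small-input hypothesis and both yield the target exponent $R^{2\alpha-1-3\eta}$.
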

	\begin{proof}
		The idea here, analogous to the proof of Lemma~\ref{lemma:large_deviations_upper_bound_martingale_term}, is to give an upper bound on the moment generating function $\bE\big[e^{\la M_{j_2}} \mid \mathcal{F}_{\ell} \, \big]$. The difference between the lemmas is that here the parameter $\la$ will be taken small as $R\to \infty$, contrary to the proof of Lemma~\ref{lemma:large_deviations_upper_bound_martingale_term}, where $\la$ was large. This can be achieved by applying Claim~\ref{claim:conditional_upper_bound_MGF_one_level_small_input} instead of Claim~\ref{claim:conditional_upper_bound_on_MGF_one_level_down}. Indeed, on the event $\mathcal{A}$, we know by~\eqref{eq:bound_on_point_count_on_the_good_event} that for all $Q\in \mathcal{V}_j$, with $j\ge \ell$ 
		\[
		\mu_n(Q) \le 10 R^2 \, .
		\]
		Letting $\la = R^{\alpha -1 - 2\eta}$ and recalling that $\alpha<1$, we see that for all $R$ large enough we have
		\[
		\la \le \frac{1}{\log^2(10R^2)} \le \frac{1}{\log^2 \big(\mu_n(Q)\big)}
		\]
		for all $Q\in \mathcal{V}_j$ with $j\ge \ell$. Thus, we may apply Claim~\ref{claim:conditional_upper_bound_MGF_one_level_small_input} and observe that
		\begin{align*}
			\mathbf{1}_{\mathcal{A}} \, \bE\Big[ e^{\la M_{k}}\mid \mathcal{F}_\ell \, \Big] &= \mathbf{1}_{\mathcal{A}} \, \bE\Big[ \prod_{j=\ell}^{k} \prod_{Q\in \mathcal{U}_j}e^{\la \Delta_n(Q)} \prod_{Q\in \mathcal{V}_{k}} e^{\la p(Q) \Delta_n(Q)}\mid \mathcal{F}_\ell \, \Big] \\  & \stackrel{\text{Prop.}~\ref{prop:self_similarity_and_conditional_independence}}{=}  \mathbf{1}_{\mathcal{A}} \, \bE\Big[ \Big(\prod_{j=\ell}^{k-1} \prod_{Q\in \mathcal{U}_j}e^{\la \Delta_n(Q)} \Big) \prod_{Q^\prime \in \mathcal{V}_{k-1}} \bE\Big[ \prod_{\substack{Q\subset Q^\prime \\ Q\in \mathcal{D}_{k}}} e^{\la p(Q) \Delta_n(Q)} \mid \mathcal{F}_{k-1} \, \Big]\mid \mathcal{F}_\ell \, \Big] \\ & \stackrel{\text{Claim}~\ref{claim:conditional_upper_bound_MGF_one_level_small_input}}{\le} \mathbf{1}_{\mathcal{A}} \, \bE\Big[ e^{\la M_{k-1}} \prod_{Q^\prime \in \mathcal{V}_{k-1}} e^{C\la^2 \log^4(\mu_n(Q^\prime) + 1)}\mid \mathcal{F}_\ell \, \Big] \\ & \quad  \, \le e^{C\log^4(R) \la^2 |\mathcal{V}_{k-1}|} \, \mathbf{1}_{\mathcal{A}} \, \bE\Big[ e^{\la M_{k-1}} \mid \mathcal{F}_\ell \, \Big] \, .
		\end{align*}
		Iterating the above inequality $j_2 -\ell$ times, we use the bound~\eqref{eq:bound_on_M_l_assuming_good_event} to arrive at 
		\begin{align*}
			\mathbf{1}_{\mathcal{A}} \, \bE\Big[ e^{\la M_{j_2}}\mid \mathcal{F}_\ell \, \Big] &\le \exp\Big(C\la^2 \log^4(R) \sum_{k=\ell}^{j_2} |\mathcal{V}_{k}|\Big) \,  \mathbf{1}_{\mathcal{A}} \, e^{\la M_\ell} \\ &\le \exp\Big(C\la^2 \log^4(R) \sum_{k=\ell}^{j_2} 2^{k-\ell} + C\la R^{\alpha-\eta}\Big) \le \exp\Big(C\la^2 \log^4(R) R + C\la R^{\alpha -\eta}\Big) \, .
		\end{align*}
		Applying Markov's inequality while recalling that $\la = R^{\alpha-1-2\eta}$, we get
		\begin{align*}
			\mathbf{1}_{\mathcal{A}} \, \bP\Big[ M_{j_2}  \ge \tfrac{1}{2} R^{\alpha} \mid \mathcal{F}_\ell\Big] &\le \exp\Big(-\la R^\alpha/2 + C\la^2 \log^4(R) R + 4\la R^{\alpha -\eta}\Big) \\ & \le \exp\Big(-R^{2\alpha-1-3\eta}\Big)
		\end{align*} 
		for all $R$ large enough. The upper bound for the left tail
		\[
		\mathbf{1}_{\mathcal{A}} \, \bP\Big[ M_{j_2}  \le - \tfrac{1}{2} R^{\alpha} \mid \mathcal{F}_\ell\Big] \le \exp\Big(-R^{2\alpha-1-3\eta}\Big)
		\]
		is obtained in a similar way and the lemma follows.
	\end{proof}
	\subsection{Not too many points near the boundary}
	To deal with the boundary term $B_{j_2}$, we will apply Hoeffding's inequality on the tail of the sum of bounded random variables, see Section~\ref{sec:proof_thm_upper_bound_moderate} below. Before that, we need to show that with high probability, there are not too many points near the boundary of $\bD_n(R)$, as shown by the following lemma.
	\begin{lemma}
		\label{lemma:cramer_estimate_on_the_boundary}
		There exists $c_\beta>0$ so that for all $\eps>0$ we have
		\[
		\mathbf{1}_{\mathcal{A}} \, \bP\Big[\sum_{Q\in \mathcal{V}_{j_2}} \mu_n(Q)^2 \ge R^{1+\eps} \mid \mathcal{F}_\ell \, \Big] \le \exp\Big(-c_\beta R^{1+\eps}\Big)
		\]
		for all $R\ge R_0(\beta,\eps)$ large enough, where $\mathcal{A}$ is given by~\eqref{eq:def_of_event_no_disc_in_top_cubes_containing_the_disk}.
	\end{lemma}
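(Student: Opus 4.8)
The plan is to replace the sum of squared counts by a sum of squared \emph{discrepancies}, run a ``dissipation'' recursion down the dyadic tree from level $\ell$ to $j_2$ in which the squared-discrepancy energy contracts by the factor $\tfrac14$ at each level, bound the conditional moment generating function level by level using Claim~\ref{claim:upper_bound_on_moment_generating_function_disc_squared_top_level} together with the self-similarity Proposition~\ref{prop:self_similarity_and_conditional_independence}, and finally apply Markov's inequality with a fixed parameter $s=\beta/8$.

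\textbf{Step 1: reduction to discrepancies.} By~\eqref{eq:def_of_j_2} one has $n/4^{j_2}\in[1,4)$, so for every $Q\in\mathcal{V}_{j_2}$, $\mu_n(Q)^2\le 2\Delta_n(Q)^2+32$; also $|\mathcal{V}_{j_2}|\lesssim 2^{j_2-\ell}\lesssim R$ by Claim~\ref{claim:gauss_type_estimate} (rescaled). Hence, writing $G_j:=\sum_{Q\in\mathcal{V}_j}\Delta_n(Q)^2$, it is enough to bound $\mathbf{1}_{\mathcal{A}}\,\bE\!\left[\exp\!\big(\tfrac{s}{2}\sum_{Q\in\mathcal{V}_{j_2}}\mu_n(Q)^2\big)\mid\mathcal{F}_\ell\right]\le e^{C_\beta R}\,\mathbf{1}_{\mathcal{A}}\,\bE\!\left[e^{sG_{j_2}}\mid\mathcal{F}_\ell\right]$.

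\textbf{Step 2: the dissipation recursion.} For $Q'\in\mathcal{D}_j$ and its four children $Q\subset Q'$ in $\mathcal{D}_{j+1}$, put $Z_Q:=\mu_n(Q)-\tfrac14\mu_n(Q')=\Delta_n(Q)-\tfrac14\Delta_n(Q')$; since $\sum_{Q\subset Q'}Z_Q=0$, expanding squares gives the exact identity $\sum_{Q\subset Q'}\Delta_n(Q)^2=\tfrac14\Delta_n(Q')^2+\sum_{Q\subset Q'}Z_Q^2$. The parent of a boundary square is again a boundary square, so $\mathcal{V}_{j+1}$ is contained in the set of children of $\mathcal{V}_j$; summing the identity over $Q'\in\mathcal{V}_j$ yields $G_{j+1}\le\tfrac14 G_j+F_j$ where $F_j:=\sum_{Q'\in\mathcal{V}_j}\sum_{Q\subset Q'}Z_Q^2\ge 0$. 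Iterating down to level $\ell$ gives $G_{j_2}\le 4^{-(j_2-\ell)}G_\ell+\sum_{j=\ell}^{j_2-1}F_j$. On $\mathcal{A}$ one has $G_\ell\le 10\,R^{2(\alpha-\eta)}$ (at most $10$ squares in $\mathcal{V}_\ell$, each with $|\Delta_n|\le R^{\alpha-\eta}$), and since $4^{j_2-\ell}\asymp R^2$ while $\alpha<1$, the term $4^{-(j_2-\ell)}G_\ell$ is $o(1)$. Therefore $\mathbf{1}_{\mathcal{A}}\,e^{sG_{j_2}}\le 2\,\mathbf{1}_{\mathcal{A}}\prod_{j=\ell}^{j_2-1}e^{sF_j}$ for $R$ large.

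\textbf{Step 3: moment generating function and conclusion.} Fix $s=\beta/8$, so that $\|\boldsymbol{s}\|=2s<\beta/2$ for $\boldsymbol{s}=(s,s,s,s)$. By Proposition~\ref{prop:self_similarity_and_conditional_independence}, conditionally on $\mathcal{F}_j$ the families $\{Z_Q\}_{Q\subset Q'}$ over distinct $Q'\in\mathcal{V}_j$ are independent, each distributed as the top-level discrepancy vector with $\mu_n(Q')$ particles, so Claim~\ref{claim:upper_bound_on_moment_generating_function_disc_squared_top_level} (whose bound is uniform over $\|\boldsymbol{s}\|<\beta/2$) gives $\bE[e^{sF_j}\mid\mathcal{F}_j]\le\prod_{Q'\in\mathcal{V}_j}\exp\!\big(C_\beta\log^2(\mu_n(Q')+1)\big)$; on $\mathcal{A}$, \eqref{eq:bound_on_point_count_on_the_good_event} bounds every $\mu_n(Q')\le 10R^2$, so this is $\le\exp(C_\beta|\mathcal{V}_j|\log^2 R)$. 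Peeling the factors $e^{sF_{j_2-1}},e^{sF_{j_2-2}},\dots$ off one at a time (each $F_k$ is $\mathcal{F}_{k+1}$-measurable), and using $\sum_{j=\ell}^{j_2-1}|\mathcal{V}_j|\lesssim\sum_j 2^{j-\ell}\lesssim R$, we obtain $\mathbf{1}_{\mathcal{A}}\,\bE[e^{sG_{j_2}}\mid\mathcal{F}_\ell]\le\exp(C_\beta R\log^2 R)$. Combining with Step~1 and applying Markov's inequality, $\mathbf{1}_{\mathcal{A}}\,\bP[\sum_{Q\in\mathcal{V}_{j_2}}\mu_n(Q)^2\ge R^{1+\eps}\mid\mathcal{F}_\ell]\le\exp\!\big(-\tfrac{\beta}{16}R^{1+\eps}+C_\beta R\log^2 R\big)\le\exp(-\tfrac{\beta}{32}R^{1+\eps})$ for $R\ge R_0(\beta,\eps)$, giving the claim with $c_\beta=\beta/32$.

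\textbf{Main obstacle.} The heart of the argument is Step 2: recognizing that the squared-discrepancy energy $G_j$ contracts by the exact factor $\tfrac14$ per dyadic level, so that the potentially large initial energy $G_\ell\lesssim R^{2\alpha}$ is annihilated by the accumulated factor $4^{-(j_2-\ell)}\asymp R^{-2}$ precisely because $\alpha<1$, and then unwinding the conditional moment generating functions down the tree while using the event $\mathcal{A}$ to keep each per-level error $\log^2(\mu_n(Q')+1)$ uniformly (in $n$) comparable to $\log^2 R$. The remaining ingredients — the $\mu^2\!\to\!\Delta^2$ reduction, the Gauss-type count of boundary squares, and the final Chernoff bound — are routine.
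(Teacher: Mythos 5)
Your proof is correct, and it uses the same core tools as the paper (a reduction from $\mu_n(Q)^2$ to $\Delta_n(Q)^2$, the self-similarity Proposition~\ref{prop:self_similarity_and_conditional_independence}, the squared-discrepancy MGF bound Claim~\ref{claim:upper_bound_on_moment_generating_function_disc_squared_top_level}, the bound $\mu_n(Q')\le 10R^2$ on $\mathcal{A}$, and a final Markov step), but the way you deploy them is genuinely different and, I would argue, more careful. The paper peels \emph{one} level: it conditions on $\mathcal{F}_{j_2-1}$, factorizes over $Q'\in\mathcal{V}_{j_2-1}$ by conditional independence, and then invokes Claim~\ref{claim:upper_bound_on_moment_generating_function_disc_squared_top_level} to assert $\bE\big[\prod_{Q\subset Q'}e^{\frac{\beta}{10}\Delta_n(Q)^2}\mid\mathcal{F}_{j_2-1}\big]\le\exp\big(C\log^2(\mu_n(Q')+1)\big)$. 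As you implicitly noticed, this step is delicate: conditionally on $\mathcal{F}_{j_2-1}$, the variable $\Delta_n(Q)$ equals $Z_Q+\tfrac14\Delta_n(Q')$ where $Z_Q=\mu_n(Q)-\tfrac14\mu_n(Q')$ is the genuine top-level discrepancy that the claim controls, and the cross term vanishes only because $\sum_{Q\subset Q'}Z_Q=0$; one is still left with the deterministic factor $e^{s\Delta_n(Q')^2/4}$, which the paper's one-step argument does not explicitly account for. Your dissipation identity $\sum_{Q\subset Q'}\Delta_n(Q)^2=\tfrac14\Delta_n(Q')^2+\sum_{Q\subset Q'}Z_Q^2$ makes this factor explicit and then kills it by running the recursion all the way down to level $\ell$: the accumulated factor $4^{-(j_2-\ell)}\asymp R^{-2}$ annihilates the initial energy $G_\ell\lesssim R^{2\alpha-2\eta}$ precisely because $\alpha<1$, while the per-level error $\exp(C_\beta|\mathcal{V}_j|\log^2 R)$ sums to $\exp(C_\beta R\log^2R)$ via $\sum_j|\mathcal{V}_j|\lesssim R$, exactly as in the paper. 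The two arguments reach the same final estimate; yours buys transparency and a clean handling of the centering issue, at the cost of a longer telescoping computation, whereas the paper's version is shorter but leaves the reader to supply the recursion for the shifted term.
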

	\begin{proof}
		By our choice of $j_2$ given by~\eqref{eq:def_of_j_2}, we have
		\[
		\bE\big[\mu_n(Q)\big] = \frac{n}{4^{j_2}} \le 2
		\]
		for all $Q\in \mathcal{D}_{j_2}$. Recalling that $|\mathcal{V}_{j_2}| \lesssim R$, we can apply the Cauchy-Schwarz inequality and get that
		\begin{align*}
			\sum_{Q\in \mathcal{V}_{j_2}} \mu_n(Q)^2 &= \sum_{Q\in \mathcal{V}_{j_2}} \big(\Delta_n(Q) + \bE\big[\mu_n(Q)\big]\big)^2 \\ &= \sum_{Q\in \mathcal{V}_{j_2}} \Big\{ \big(\Delta_n(Q)\big)^2 + 2\bE[\mu_n(Q)]  \Delta_n(Q) + \bE\big[\mu_n(Q)\big]^2 \Big\} \\ & \le C R + C \sqrt{R} \, \bigg(\sum_{Q\in \mathcal{V}_{j_2}} \big(\Delta_n(Q)\big)^2\bigg)^{1/2} + \sum_{Q\in \mathcal{V}_{j_2}} \big(\Delta_n(Q)\big)^2 \, ,
		\end{align*}
		where $C>0$ is some absolute constant. Therefore, for all $R$ large enough
		\[
		\Big\{\sum_{Q\in \mathcal{V}_{j_2}} \mu_n(Q)^2 \ge R^{1+\eps}\Big\} \subset \Big\{ \sum_{Q\in \mathcal{V}_{j_2}} \big(\Delta_n(Q)\big)^2 \ge \tfrac{1}{4} R^{1+\eps}\Big\} \, .
		\]
		The proof of the lemma would follow once we show that
		\begin{equation}
			\label{eq:cramer_estimate_on_the_boundary_after_reduction}
			\mathbf{1}_{\mathcal{A}} \, \bP\Big[\sum_{Q\in \mathcal{V}_{j_2}} \big(\Delta_n(Q)\big)^2 \ge \frac{1}{4} R^{1+\eps} \mid \mathcal{F}_\ell \, \Big] \le \exp\Big(-c_\beta R^{1+\eps}\Big)
		\end{equation}
		Indeed, Proposition~\ref{prop:self_similarity_and_conditional_independence} implies that
		\begin{align*}
			\bE\Big[\prod_{Q\in \mathcal{V}_{j_2}} e^{\frac{\beta}{10} \Delta_n(Q)^2} \mid \mathcal{F}_{\ell} \, \Big] &= \bE\bigg[\bE\Big[\prod_{Q\in \mathcal{V}_{j_2}} e^{\frac{\beta}{10} \Delta_n(Q)^2} \mid \mathcal{F}_{j_2-1} \, \Big] \mid \mathcal{F}_{\ell} \, \bigg] \\ &=  \bE\bigg[\prod_{Q^\prime\in \mathcal{V}_{j_2-1}}\bE\Big[\prod_{\substack{Q\subset Q^\prime \\ Q\in \mathcal{V}_{j_2}}} e^{\frac{\beta}{10} \Delta_n(Q)^2} \mid \mathcal{F}_{j_2-1} \, \Big] \mid \mathcal{F}_{\ell} \, \bigg] \, .
		\end{align*}
		By applying Claim~\ref{claim:upper_bound_on_moment_generating_function_disc_squared_top_level} with
		\[
		\boldsymbol{s} = \Big(\frac{\beta}{10},\frac{\beta}{10},\frac{\beta}{10},\frac{\beta}{10}\Big)
		\]
		we see that for each $Q^\prime\in \mathcal{V}_{j_2-1}$, conditionally on $\mathcal{F}_{j_2-1}$ we have
		\[
		\bE\Big[\prod_{\substack{Q\subset Q^\prime \\ Q\in \mathcal{V}_{j_2}}} e^{\frac{\beta}{10} \Delta_n(Q)^2} \mid \mathcal{F}_{j_2-1} \, \Big] \le \exp \Big(C \log^2(\mu_n(Q^\prime)+1)\Big) \, .
		\]
		On the event $\mathcal{A}$, this implies the bound
		\begin{align*}
			\mathbf{1}_{\mathcal{A}} \, \bE\Big[\prod_{Q\in \mathcal{V}_{j_2}} e^{\frac{\beta}{10} \Delta_n(Q)^2} \mid \mathcal{F}_{\ell} \, \Big] &\le \mathbf{1}_{\mathcal{A}} \, \bE\Big[\prod_{Q^\prime\in \mathcal{V}_{j_2-1}}e^{C\log^2(\mu_n(Q^\prime) + 1)} \mid \mathcal{F}_{\ell} \, \Big] \\ & \le \exp\Big(C\log^2(R) |\mathcal{V}_{j_2-1}|\Big) \le \exp\Big(CR \, \log^2(R) \Big) \, .
		\end{align*}
		Hence, Markov's inequality implies that
		\begin{align*}
			\mathbf{1}_{\mathcal{A}} \, \bP\Big[\sum_{Q\in \mathcal{V}_{j_2}} \big(\Delta_n(Q)\big)^2 \ge \tfrac{1}{4} R^{1+\eps} \mid \mathcal{F}_\ell \, \Big] &\le e^{-\frac{\beta}{40} R^{1+\eps}} \, \mathbf{1}_{\mathcal{A}} \, \bE\Big[\prod_{Q\in \mathcal{V}_{j_2}} e^{\frac{\beta}{10} \Delta_n(Q)^2} \mid \mathcal{F}_{\ell} \, \Big] \\ & \le \exp\Big(-\frac{\beta}{40} R^{1+\eps} + CR \, \log^2(R)\Big)
		\end{align*}
		which proves~\eqref{eq:cramer_estimate_on_the_boundary_after_reduction} holds for all $R$ large enough, and we conclude the proof.
	\end{proof}
	\subsection{Proof of Theorem~\ref{thm:Micro_JLM}: the upper bound for \texorpdfstring{$\frac{1}{2}<\alpha<1$}{1/2 < alpha < 1}}
	\label{sec:proof_thm_upper_bound_moderate}
	Recall Hoeffding's inequality on the tail of the sum of bounded random variables~\cite{Hoeffding}. Let $X_1,\ldots,X_N$ be independent random variables such that $a_i\le X_i \le b_i$, and set $S_N = X_1 + \ldots+ X_N$. Then for all $t> 0$ we have
	\begin{equation}
		\label{eq:Hoeffding_ineq}
		\bP\Big[\big|S_N - \bE[S_N]\big| \ge t\Big] \le 2\exp\bigg(-\frac{2t^2}{\sum_{i=1}^{n}(b_i-a_i)^2}\bigg) \, .
	\end{equation}
	Recalling the split $\Delta_n\big(\bD_n(R)\big) = M_{j_2} + B_{j_2}$, we have the inequality
	\begin{align}
		\nonumber
		\label{eq:moderate_deviation_upper_bound_first_split}
		\bP\Big[\big|\Delta_n\big(\bD_n(R)\big) \big| \ge R^{\alpha}\Big] &\le \bP\Big[\big|\Delta_n\big(\bD_n(R)\big) \big| \ge R^{\alpha}, \, \mathcal{A} \, \Big] + \bP\big[\mathcal{A}^c\big] \\ &\le \bP\Big[\big|M_{j_2} \big| \ge \tfrac{1}{2}R^{\alpha}, \, \mathcal{A} \, \Big] + \bP\Big[\big|B_{j_2} \big| \ge \tfrac{1}{2}R^{\alpha}, \, \mathcal{A} \, \Big] + \bP\big[\mathcal{A}^c\big] \, .
	\end{align}
	By Lemma~\ref{lemma:upper_bound_moderate_deviations_martingale_term} we have
	\[
	\bP\Big[\big|M_{j_2} \big| \ge \tfrac{1}{2}R^{\alpha}, \, \mathcal{A} \, \Big] = \bE\Big[\mathbf{1}_{\mathcal{A}} \, \bP\Big[\big|M_{j_2} \big| \ge \tfrac{1}{2}R^{\alpha} \mid \mathcal{F}_\ell \, \Big]\Big] \le 2\exp\Big(-R^{2\alpha -1 - 3\eta}\Big) \, .
	\]
	Furthermore, Corollary~\ref{corollary:concentration_of_four_cubes_containing_the_disk} implies that for all $R$ large enough,
	\[
	\bP\big[\mathcal{A}^c\big] \le \exp\Big(-R^{2\alpha- 3\eta}\Big) \, .
	\]
	Hence, we will conclude the upper bound in the range $\frac{1}{2}< \alpha<1$ once we deal with the term $|B_{j_2}|$ in~\eqref{eq:moderate_deviation_upper_bound_first_split}, which correspond to the discrepancy near the boundary. For this, we make a further split as
	\begin{equation*}
		\bP\Big[\big|B_{j_2} \big| \ge \tfrac{1}{2}R^{\alpha}, \, \mathcal{A} \, \Big]  \le \bP\Big[\sum_{Q\in \mathcal{V}_{j_2}} \mu_n(Q)^2 \ge R^{1+\eps} , \, \mathcal{A} \, \Big] + \bP\Big[\big|B_{j_2} \big| \ge \tfrac{1}{2}R^{\alpha}, \, \sum_{Q\in \mathcal{V}_{j_2}} \mu_n(Q)^2 < R^{1+\eps} \, \Big]\, .
	\end{equation*} 
	By Lemma~\ref{lemma:cramer_estimate_on_the_boundary}, we know that
	\[
	\bP\Big[\sum_{Q\in \mathcal{V}_{j_2}} \mu_n(Q)^2 \ge R^{1+\eps} , \, \mathcal{A} \, \Big] \le \exp\Big(-c_\beta R^{1+\eps}\Big)
	\]
	and since $2\alpha-1<1+\eps$ in this range, the probability of this event is negligible. To conclude the desired upper bound, it remains to show that
	\begin{equation}
		\label{eq:moderate_deviation_upper_bound_boundary_after_hoeffding}
		\bP\Big[\big|B_{j_2} \big| \ge \tfrac{1}{2}R^{\alpha}, \, \sum_{Q\in \mathcal{V}_{j_2}} \mu_n(Q)^2 < R^{1+\eps} \, \Big] \le \exp\Big(-R^{2\alpha - 1 - \eps/2}\Big) \, .
	\end{equation}
	Indeed, conditioned on $\mathcal{F}_{j_2}$, the random variable
	\[
	B_{j_2} = \sum_{Q\in \mathcal{V}_{j_2}} \mu_n\big(Q\cap \bD_n(R)\big) - p(Q) \mu_n(Q)
	\]
	is a sum of independent mean-zero random variables. As
	\[
	\big|\mu_n\big(Q\cap \bD_n(R)\big) - p(Q) \mu_n(Q)\big| \le 2 \mu_n(Q)
	\]
	we can apply Hoeffding's inequality~\eqref{eq:Hoeffding_ineq} (conditionally on $\mathcal{F}_{j_2}$) and see that
	\begin{equation*}
		\bP\Big[\big|B_{j_2} \big| \ge \tfrac{1}{2}R^{\alpha}\mid \mathcal{F}_{j_2} \, \Big] \le 2\exp\Big(- \frac{R^{2\alpha}}{8\sum_{Q\in \mathcal{V}_{j_2}} \mu_n(Q)^2}\Big) \, .
	\end{equation*}
	By the law of total expectation, this implies that
	\begin{align*}
		\bP\Big[\big|B_{j_2} \big| \ge \tfrac{1}{2}R^{\alpha}&, \, \sum_{Q\in \mathcal{V}_{j_2}} \mu_n(Q)^2 < R^{1+\eps} \, \Big] \\ &= \bE\Big[ \mathbf{1}_{\big\{\sum_{Q\in \mathcal{V}_{j_2}} \mu_n(Q)^2 < R^{1+\eps}\big\}} \bP\Big[\big|B_{j_2} \big| \ge \tfrac{1}{2}R^{\alpha}\mid \mathcal{F}_{j_2} \, \Big]  \Big] \\ & \stackrel{\eqref{eq:Hoeffding_ineq}}{\le} \bE\Big[ \mathbf{1}_{\big\{\sum_{Q\in \mathcal{V}_{j_2}} \mu_n(Q)^2 < R^{1+\eps}\big\}} 2\exp\Big(- \frac{R^{2\alpha}}{8\sum_{Q\in \mathcal{V}_{j_2}} \mu_n(Q)^2}\Big) \, \Big]  \le \exp\Big(-R^{2\alpha - 1 - \eps/2}\Big)
	\end{align*}
	for all $R$ large enough, which proves~\eqref{eq:moderate_deviation_upper_bound_boundary_after_hoeffding}. Plugging into~\eqref{eq:moderate_deviation_upper_bound_first_split}, we get the desired upper bound in the range $\frac{1}{2}<\alpha<1$.
	\qed
	\section{Lower bound for \texorpdfstring{$1 < \alpha < 2$}{1 < alpha < 2}}
	\label{sec:LB_large_deviations}
	Here, we will build an explicit event which is contained in $\{\Delta_n\big(\bD_n(R)\big) \ge R^\alpha\}$, whose probability we can estimate from below. Succinctly, in the event we shall ``push" all point from the boundary cubes into the disk, assuming that up until this height of the tree all discrepancies were typical.
	
	Let $\eta>0$ be a small constant and let $j_3>\ell$ be an integer so that
	\begin{equation}
		\label{eq:def_of_j_3}
		\frac{R^{\alpha - 1 + \eta}}{\sqrt{n}} \le 2^{-j_3} < 2 \, \frac{R^{\alpha-1+\eta}}{\sqrt{n}} \, .
	\end{equation}
	Recall the notion of maximal squares and boundary squares, as given by Definition~\ref{definition:maximal_and_boundary_cubes}. 
	\subsection{The good event in \texorpdfstring{$\mathcal{F}_{j_3}$}{Fj3}} Consider the event in $\mathcal{G}_1\in \mathcal{F}_{j_3}$, given by
	\begin{equation}
		\label{eq:def_of_good_event_large_deviations_lower_bound}
		\mathcal{G}_1 = \bigg\{ \forall Q\in \bigcup_{j=\ell}^{j_3} \mathcal{U}_j \cup \mathcal{V}_{j_3} \, : \, |\Delta_{n}(Q)| < R^{\eta/2} \, \bigg\}\, .
	\end{equation}
	\begin{claim}
		\label{claim:large_deviations_lower_bound_good_event_is_likely}
		We have $\displaystyle \lim_{R\to \infty} \lim_{n\to \infty} \bP\big[\mathcal{G}_1\big] = 1$.
	\end{claim}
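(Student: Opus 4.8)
The plan is to estimate $\mathbb{P}[\mathcal{G}_1^c]$ by a union bound over the (polynomially many) squares appearing in the definition of $\mathcal{G}_1$, controlling each term by the sub-Gaussian tail of Lemma~\ref{lemma:concentration_on_dyadic_cubes}. First I would count the squares. By Claim~\ref{claim:gauss_type_estimate}, applied to $\bD_n(R)$ after rescaling $\mathcal{D}_j$ by $2^j$ (so the disk has radius $2^jR/\sqrt n\lesssim 2^{j-\ell}$), one has $|\mathcal{V}_j|\lesssim 2^{j-\ell}$ for all $\ell\le j\le j_3$. Moreover every maximal square in $\mathcal{D}_j$ sits inside a boundary square of $\mathcal{D}_{j-1}$ (its parent meets both $\bD_n(R)$ and $\bD_n(R)^c$), so $|\mathcal{U}_j|\le 4|\mathcal{V}_{j-1}|\lesssim 2^{j-\ell}$ for $j>\ell$, while $|\mathcal{U}_\ell|\le 10$ by the remark following~\eqref{eq:def_of_ell}. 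Summing the geometric series and using $2^{j_3-\ell}\asymp R^{2-\alpha-\eta}$ (which follows from comparing~\eqref{eq:def_of_ell} with~\eqref{eq:def_of_j_3}), the total number of squares in $\bigcup_{j=\ell}^{j_3}\mathcal{U}_j$ together with those in $\mathcal{V}_{j_3}$ is $\lesssim R^{2-\alpha-\eta}\le R^2$.

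Next I would bound each individual probability. For $Q\in\mathcal{D}_j$ with $\ell\le j\le j_3$, put $L_j=n/4^j=\mathbb{E}[\mu_n(Q)]$; comparing~\eqref{eq:def_of_ell} and~\eqref{eq:def_of_j_3} gives $R^{2\alpha-2+2\eta}\le L_j<4R^2$, uniformly in $n$, and in particular $L_j\to\infty$ as $R\to\infty$. Fix once and for all $\kappa=\eta/8\in(0,1)$. For $R\ge 2$ we then have $L_j^{\kappa}<(4R^2)^{\eta/8}\le R^{\eta/2}$, hence $\{|\Delta_n(Q)|\ge R^{\eta/2}\}\subseteq\{|\Delta_n(Q)|\ge L_j^{\kappa}\}$, and Lemma~\ref{lemma:concentration_on_dyadic_cubes} (applicable since $L_j\ge L_0(\beta,\kappa)$ once $R$ is large, uniformly in $n$) yields
\[
\mathbb{P}\big[|\Delta_n(Q)|\ge R^{\eta/2}\big]\le\exp\big(-c_{\kappa,\beta}L_j^{2\kappa}\big)\le\exp\big(-c_{\kappa,\beta}R^{(\alpha-1)\eta/2}\big),
\]
where in the last step I used $L_j^{2\kappa}=L_j^{\eta/4}\ge R^{(2\alpha-2+2\eta)\eta/4}\ge R^{(\alpha-1)\eta/2}$ and that $\alpha>1$ makes this exponent positive.

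Combining the count with the per-square bound gives
\[
\mathbb{P}[\mathcal{G}_1^c]\ \lesssim\ R^{2}\exp\big(-c_{\kappa,\beta}R^{(\alpha-1)\eta/2}\big),
\]
which is a bound independent of $n$ (valid for all $n$ large enough that $\bD_n(R)\subset[0,1]^2$ and $\ell<j_3$), and it tends to $0$ as $R\to\infty$. Therefore $\liminf_{n\to\infty}\mathbb{P}[\mathcal{G}_1]\ge 1-C R^{2}\exp(-c_{\kappa,\beta}R^{(\alpha-1)\eta/2})$, and letting $R\to\infty$ proves the claim.

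\textbf{Main obstacle.} There is no deep difficulty: the argument is a union bound plus Lemma~\ref{lemma:concentration_on_dyadic_cubes}. The two points requiring a little care are (i) producing a \emph{single} exponent $\kappa$ that works simultaneously for every dyadic scale $j\in[\ell,j_3]$, which is possible precisely because $L_j$ is squeezed between two fixed powers of $R$ uniformly in $n$, allowing one to take $\kappa$ a small fixed multiple of $\eta$; and (ii) checking that the stretched-exponential gain $\exp(-c R^{(\alpha-1)\eta/2})$ genuinely dominates the polynomial count $R^{2-\alpha-\eta}$ of squares — which holds here only because $\alpha>1$ keeps the exponent $(\alpha-1)\eta/2$ strictly positive.
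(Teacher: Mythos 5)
Your proof is correct and follows essentially the same route as the paper's: a union bound over the $\lesssim 2^{j_3-\ell}\lesssim R^{2-\alpha-\eta}$ squares in $\bigcup_{j=\ell}^{j_3}\mathcal{U}_j\cup\mathcal{V}_{j_3}$, with each per-square probability controlled by Lemma~\ref{lemma:concentration_on_dyadic_cubes} after noting that $\bE[\mu_n(Q)]$ is squeezed between fixed powers of $R$ (so one fixed $\kappa$ of order $\eta$ works for all scales $j\in[\ell,j_3]$, with $\alpha>1$ keeping the resulting exponent positive). The only differences are cosmetic: you take $\kappa=\eta/8$ while the paper effectively uses $\eta/4$, and your intermediate exponents and counting constants are slightly different but lead to the same stretched-exponential-beats-polynomial conclusion.
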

	\begin{proof}
		By our choice of $\ell\ge 1$~\eqref{eq:def_of_ell} and $j_3\ge \ell$~\eqref{eq:def_of_j_3}, we see that for all $Q\in \bigcup_{j=\ell}^{j_3} \mathcal{U}_j \cup \mathcal{V}_{j_3}$ 
		\[
		\tfrac12 R^{\alpha-1+\eta} \le \bE\big[\mu_n(Q)\big] \le 2 R^2 \, .
		\]
		In particular, for all $Q\in \bigcup_{j=\ell}^{j_3} \mathcal{U}_j \cup \mathcal{V}_{j_3}$ we have the inclusion
		\[
		\Big\{ |\Delta_n(Q)| \ge R^{\eta/2} \Big\} \subset \Big\{ |\Delta_n(Q)| \ge 2^{-\eta/2} \, \big(\bE\big[\mu_n(Q)\big]\big)^{\eta/4} \Big\} \, .
		\]
		Letting $L_0 = L_0(\beta,\eta/4)$ be as in the assumption of Lemma~\ref{lemma:concentration_on_dyadic_cubes}, for all $R$ large enough we have $R^{1-\alpha+\eta} \ge L_0$, and Lemma~\ref{lemma:concentration_on_dyadic_cubes} implies that
		\begin{equation*}
			\bP\Big[ |\Delta_n(Q)| \ge R^{\eta/2}\Big] \le \exp\Big(-c_{\eta,\beta} \big(\bE\big[\mu_n(Q)\big]\big)^{\eta/2} \Big) \le \exp\Big(-c_{\eta,\beta} R^{\eta(\alpha-1+\eta)/2}\Big)\, .
		\end{equation*}
		Furthermore, Claim~\ref{claim:gauss_type_estimate} implies that for all $j\ge \ell$ we have $|\mathcal{U}_{j}| \lesssim 2^{j-\ell}$, hence
		\[
		\sum_{j=\ell}^{j_3} |\mathcal{U}_j| + |\mathcal{V}_{j_3}| \lesssim 2^{j_3 - \ell} \lesssim R^{2-\alpha-\eta} \, .
		\]
		Combining these observations, the union bound gives
		\[
		\bP\big[\mathcal{G}_1^c\big] \lesssim R^{2-\alpha-\eta} \, \exp\Big(-c_{\eta,\beta} R^{\eta^2/2} \Big)
		\]
		uniformly as $n\to \infty$, and we are done.
	\end{proof}
	We remark that on the event $\mathcal{G}_1$, we further have
	\begin{equation}
		\label{eq:large_deviations_lower_bound_good_event_implies_small_inner_disc}
		\Big|\sum_{j=\ell}^{j_3} \sum_{Q\in \mathcal{U}_j} \Delta_n(Q)\Big| \le R^{\alpha - \eta/4} 
	\end{equation}
	for all $R$ large enough. Indeed, the triangle inequality implies that
	\begin{align*}
		\Big|\sum_{j=\ell}^{j_3} \sum_{Q\in \mathcal{U}_j} \Delta_n(Q)\Big| &\le \sum_{j=\ell}^{j_3} \sum_{Q\in \mathcal{U}_j} \big|\Delta_n(Q)\big| \le R^{\eta/2} \sum_{j=\ell}^{j_3} |\mathcal{U}_j| \\ & \lesssim R^{\eta/2} \sum_{j=\ell}^{j_3} 2^{j-\ell} \lesssim R^{\eta/2} 2^{j_3-\ell} \lesssim R^{\eta/2} R^{2-\alpha-\eta}  \stackrel{\alpha>1}{\le} R^{\alpha-\eta/2} \, .
	\end{align*} 
	Hence,~\eqref{eq:large_deviations_lower_bound_good_event_implies_small_inner_disc} holds on the event $\mathcal{G}_1$ for all $R$ large enough.
	\subsection{Taking care of the boundary}
	We will say that a boundary square $Q\in \mathcal{V}_{j_3}$ is \emph{good} if it satisfies
	\begin{equation}
		\label{eq:def_of_good_boundary_cube}
		10^{-5} \le \frac{{\tt Leb}\big(Q\cap \bD_n(R)\big)}{{\tt Leb}(Q)} \le 1-10^{-5} \, .
	\end{equation}
	By Claim~\ref{claim:enough boundary_cubes_are_good}, there are at least $$c_1 \, 2^{j_3-\ell} \ge \frac{c_1}{2} \, R^{2-\alpha-\eta}$$ good squares in $\mathcal{V}_{j_3}$, for some absolute constant $c_1>0$. We will denote by $\mathcal{V}_{j_3}^G$ the subset of good squares in $\mathcal{V}_{j_3}$, and by $\mathcal{V}_{j_3}^B = \mathcal{V}_{j_3} \setminus \mathcal{V}_{j_3}^G$. The above observation reads
	\begin{equation}
		\label{eq:lower_and_upper_bounds_on_good_bad_boundary_squares}
		|\mathcal{V}_{j_3}^G| \gtrsim R^{2-\alpha-\eta} \, , \qquad \text{and} \qquad |\mathcal{V}_{j_3}^B|\le |\mathcal{V}_{j_3}| \lesssim R^{2-\alpha-\eta} \, .
	\end{equation}
	By writing
	\[
	X_{j_3}^G = \sum_{Q\in \mathcal{V}_{j_3}^{G}} \Delta_{n}\big(Q\cap \bD_n(R)\big)\, , \qquad X_{j_3}^B = \sum_{Q\in \mathcal{V}_{j_3}^{B}} \Delta_{n}\big(Q\cap \bD_n(R)\big)\, ,
	\]
	we have the decomposition
	\begin{equation}
		\label{eq:large_deviations_lower_bound_decomposition_of_disc}
		\Delta_n\big(\bD_n(R)\big) = \sum_{j=1}^{j_3} \sum_{Q\in \mathcal{U}_j} \Delta_n(Q) + X_{j_3}^B + X_{j_3}^G \, .
	\end{equation}
	We will built the event so that a large discrepancy is coming from the term $X_{j_3}^G$. First, we show that typically the term $X_{j_3}^B$ is not too large.
	\begin{lemma}
		\label{lemma:large_deviations_lower_bound_bad_cubes_do_not_contribute}
		For all $R$ large enough, uniformly as $n\to \infty$, we have
		\[
		\mathbf{1}_{\mathcal{G}_1} \, \bP\Big[ \, \big|X_{j_3}^B\big| < R^{\alpha} \mid \mathcal{F}_{j_3} \, \Big] \ge \frac{1}{2} \, \mathbf{1}_{\mathcal{G}_1} 
		\]
		where $\mathcal{G}_1$ is given by~\eqref{eq:def_of_good_event_large_deviations_lower_bound}.
	\end{lemma}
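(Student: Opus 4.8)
The plan is to split $X_{j_3}^B$ into an $\mathcal{F}_{j_3}$-measurable "mean" part that is negligible on $\mathcal{G}_1$ and a conditionally centered part that we control by a second-moment estimate exploiting conditional independence.

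First I would write, for each $Q\in\mathcal{V}_{j_3}$,
\[
\Delta_n\big(Q\cap\bD_n(R)\big) = \big(\mu_n(Q\cap\bD_n(R)) - p(Q)\mu_n(Q)\big) + p(Q)\Delta_n(Q),
\]
which follows from $n\,{\tt Leb}(Q\cap\bD_n(R)) = p(Q)\,\tfrac{n}{4^{j_3}}$ and $\mu_n(Q) = \Delta_n(Q)+\tfrac{n}{4^{j_3}}$. Summing over $Q\in\mathcal{V}_{j_3}^B$ gives $X_{j_3}^B = S + E$, where $E = \sum_{Q\in\mathcal{V}_{j_3}^B}p(Q)\Delta_n(Q)$ is $\mathcal{F}_{j_3}$-measurable and $S = \sum_{Q\in\mathcal{V}_{j_3}^B}\big(\mu_n(Q\cap\bD_n(R))-p(Q)\mu_n(Q)\big)$. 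By Proposition~\ref{prop:self_similarity_and_conditional_independence}, conditionally on $\mathcal{F}_{j_3}$ the sets $\{\mathcal{P}(Q):Q\in\mathcal{D}_{j_3}\}$ are independent and each $\mathcal{P}(Q)$ is a scaled copy of $\nu_{\mu_n(Q),\beta}$; since $\bE[\mu_m(U)]=m\,{\tt Leb}(U)$ for the hierarchical gas with $m$ points, this yields $\bE[\mu_n(Q\cap\bD_n(R))\mid\mathcal{F}_{j_3}] = p(Q)\mu_n(Q)$. Hence, conditionally on $\mathcal{F}_{j_3}$, $S$ is a sum of independent mean-zero random variables, each bounded in absolute value by $\mu_n(Q)$.

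Next I would record the two size estimates valid on $\mathcal{G}_1$. By~\eqref{eq:def_of_j_3} one has $\bE[\mu_n(Q)] = n/4^{j_3} \asymp R^{2\alpha-2+2\eta}$ for $Q\in\mathcal{D}_{j_3}$, so on $\mathcal{G}_1$ (where $|\Delta_n(Q)|<R^{\eta/2}$) we get $\mu_n(Q)\lesssim R^{2\alpha-2+2\eta}$ for every $Q\in\mathcal{V}_{j_3}$; and by Claim~\ref{claim:gauss_type_estimate} (applied after scaling by $2^{j_3}$), $|\mathcal{V}_{j_3}|\lesssim 2^{j_3-\ell}\lesssim R^{2-\alpha-\eta}$. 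Consequently, on $\mathcal{G}_1$,
\[
\big|E\big| \le R^{\eta/2}\,|\mathcal{V}_{j_3}^B| \lesssim R^{2-\alpha-\eta/2},
\qquad
\Var\big[S\mid\mathcal{F}_{j_3}\big] \le \sum_{Q\in\mathcal{V}_{j_3}^B}\mu_n(Q)^2 \lesssim R^{2-\alpha-\eta}\cdot R^{4\alpha-4+4\eta} = R^{3\alpha-2+3\eta}.
\]
Since $1<\alpha<2$, the exponent $2-\alpha-\eta/2$ is strictly smaller than $\alpha$, so $|E|\le\tfrac12 R^\alpha$ once $R$ is large, and on $\mathcal{G}_1$ the event $\{|X_{j_3}^B|\ge R^\alpha\}$ is contained in $\{|S|\ge\tfrac12 R^\alpha\}$.

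Finally I would apply Chebyshev's inequality conditionally on $\mathcal{F}_{j_3}$:
\[
\mathbf{1}_{\mathcal{G}_1}\,\bP\Big[|X_{j_3}^B|\ge R^\alpha\mid\mathcal{F}_{j_3}\Big] \le \mathbf{1}_{\mathcal{G}_1}\,\frac{4\,\Var[S\mid\mathcal{F}_{j_3}]}{R^{2\alpha}} \lesssim R^{\alpha-2+3\eta}\,\mathbf{1}_{\mathcal{G}_1},
\]
and choosing $\eta$ small enough that $\alpha-2+3\eta<0$, the right-hand side is $<\tfrac12$ for all $R$ large, which gives $\mathbf{1}_{\mathcal{G}_1}\bP[|X_{j_3}^B|<R^\alpha\mid\mathcal{F}_{j_3}]\ge\tfrac12\mathbf{1}_{\mathcal{G}_1}$. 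There is no real obstacle here; the one point to watch is the bookkeeping of exponents, which is precisely what forces $\eta$ to be small relative to $2-\alpha$: the purely deterministic bound gives only $|X_{j_3}^B|\lesssim R^{\alpha+\eta}$, just barely too weak, so one genuinely needs the conditional independence and the square-root cancellation supplied by the second moment. (Hoeffding's inequality could be used in place of Chebyshev, giving the stronger bound $\exp(-cR^{2-\alpha-3\eta})$, but it is not needed for the stated conclusion.)
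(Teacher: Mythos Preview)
Your proof is correct and follows essentially the same route as the paper: split $X_{j_3}^B$ into the $\mathcal{F}_{j_3}$-measurable piece $E=\sum p(Q)\Delta_n(Q)$ (small on $\mathcal{G}_1$ since $2-\alpha-\eta/2<\alpha$) and the conditionally centered piece $S$, then bound $\bP[|S|\ge\tfrac12 R^\alpha\mid\mathcal{F}_{j_3}]$ by Chebyshev using conditional independence and the pointwise bound $\mu_n(Q)\lesssim R^{2\alpha-2+2\eta}$ on $\mathcal{G}_1$, arriving at the same exponent $R^{\alpha-2+3\eta}$. The paper's write-up phrases the centered term as $\Delta_n(Q\cap\bD_n(R))-p(Q)\Delta_n(Q)$ rather than your $\mu_n(Q\cap\bD_n(R))-p(Q)\mu_n(Q)$, but these are identical expressions.
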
 
	\begin{proof}
		On the event $\mathcal{G}_1$, we have
		\begin{equation*}
			\Big|\sum_{Q\in \mathcal{V}_{j_3}^{B}} p(Q) \Delta_n(Q)\Big| \leq R^{\eta/2} \,  |\mathcal{V}_{j_3}^{B}| \stackrel{\eqref{eq:lower_and_upper_bounds_on_good_bad_boundary_squares}}{\lesssim} R^{\eta/2+2-\alpha-\eta} = R^{2-\alpha-\eta/2} \le R^{\alpha-\eta/2}
		\end{equation*}
		where in the last inequality we used that $\alpha>1$. Furthermore, by Proposition~\ref{prop:self_similarity_and_conditional_independence}, we have
		\[
		\bE\big[ \Delta_{n}\big(Q\cap \bD_n(R)\big) \mid \mathcal{F}_{j_3} \, \big] = p(Q) \Delta_n(Q) 
		\]
		and the triangle inequality implies that, for all $R$ large enough, 
		\begin{align}
			\label{eq:large_deviations_lower_bound_bad_cubes_do_not_contribute_after_centering}
			\nonumber
			\mathbf{1}_{\mathcal{G}_1} \, \bP\Big[ \, \big|X_{j_3}^B\big| &\ge R^{\alpha} \mid \mathcal{F}_{j_3} \, \Big] \\ &\le \mathbf{1}_{\mathcal{G}_1} \, \bP\Big[ \, \Big|\sum_{Q\in \mathcal{V}_{j_3}^{B}} \Delta_{n}\big(Q\cap \bD_n(R)\big) - \bE\big[ \Delta_{n}\big(Q\cap \bD_n(R)\big) \mid \mathcal{F}_{j_3} \, \big] \Big| \ge \tfrac{1}{2}R^{\alpha} \mid \mathcal{F}_{j_3} \, \Big] \, .
		\end{align}
		To bound from above the right-hand side of~\eqref{eq:large_deviations_lower_bound_bad_cubes_do_not_contribute_after_centering}, we note that conditional on $\mathcal{F}_{j_3}$, Proposition~\ref{prop:self_similarity_and_conditional_independence} implies that the random variables $$\Big\{\Delta_n\big(Q\cap \bD_n(R) \big) \Big\}_{Q\in \mathcal{V}_{j_3}^B}$$
		are independent. By Markov's inequality, we have
		\begin{align}
			\label{eq:large_deviations_lower_bound_bad_cubes_do_not_contribute_after_centering_and_markov}
			\nonumber
			\bP\Big[ \, \Big|\sum_{Q\in \mathcal{V}_{j_3}^{B}} & \Delta_{n}\big(Q\cap \bD_n(R)\big) - \bE\big[ \Delta_{n}\big(Q\cap \bD_n(R)\big) \mid \mathcal{F}_{j_3} \, \big] \Big| \ge \tfrac{1}{2}R^{\alpha} \mid \mathcal{F}_{j_3} \, \Big] \\ \nonumber & \lesssim R^{-2\alpha} \, \bE\Big[\Big(\sum_{Q\in \mathcal{V}_{j_3}^{B}} \Delta_{n}\big(Q\cap \bD_n(R)\big) - \bE\big[ \Delta_{n}\big(Q\cap \bD_n(R)\big)\mid \mathcal{F}_{j_3} \, \big]\Big)^2 \mid \mathcal{F}_{j_3}\, \Big] \\ &= R^{-2\alpha} \,  \sum_{Q\in \mathcal{V}_{j_3}^{B}} \bE\Big[ \Big(\Delta_{n}\big(Q\cap \bD_n(R)\big) - \bE\big[ \Delta_{n}\big(Q\cap \bD_n(R)\big)\mid \mathcal{F}_{j_3} \, \big]\Big)^2 \mid \mathcal{F}_{j_3}\, \Big] \, .
		\end{align}
		On the event $\mathcal{G}_1$, we have for all $R$ large enough
		\begin{equation*}
			\max\Big\{ \big|\Delta_{n}\big(Q\cap \bD_n(R)\big)\big|,\big|\Delta_n(Q)\big|\Big\} \le \mu_n(Q) \lesssim R^{2\alpha-2+2\eta} \, .
		\end{equation*}
		Bounding each term in the sum~\eqref{eq:large_deviations_lower_bound_bad_cubes_do_not_contribute_after_centering_and_markov} with the above observation and recalling that $\big|\mathcal{V}_{j_3}^{B}\big|\lesssim R^{2-\alpha-\eta}$, we see that
		\begin{align*}
			\mathbf{1}_{\mathcal{G}_1} \, \bP\Big[ \, \Big|\sum_{Q\in \mathcal{V}_{j_3}^{B}} \Delta_{n}\big(Q\cap \bD_n(R)\big) &- \bE\big[ \Delta_{n}\big(Q\cap \bD_n(R)\big) \mid \mathcal{F}_{j_3} \, \big] \Big| \ge \tfrac{1}{2}R^{\alpha} \mid \mathcal{F}_{j_3} \, \Big] \\ &\lesssim R^{-2\alpha}\, \big|\mathcal{V}_{j_3}^{B}\big| \,  R^{4\alpha-4+4\eta} \lesssim R^{\alpha-2+3\eta}\, .
		\end{align*}
		Plugging this into~\eqref{eq:large_deviations_lower_bound_bad_cubes_do_not_contribute_after_centering}, we finally get that
		\[
		\mathbf{1}_{\mathcal{G}_1} \, \bP\Big[ \, \big|X_{j_3}^B\big| \ge R^{\alpha} \mid \mathcal{F}_{j_3} \, \Big] \lesssim R^{\alpha-2+3\eta}
		\]
		As $\alpha<2$, for small enough $\eta>0$ the right-hand side tends to zero as $R\to \infty$. In particular, for $R$ large enough, it is smaller than $\frac{1}{2}$ and we are done.
	\end{proof}
	Finally, we estimate from below the (conditional) probability that $X_{j_3}^G$ is large. 
	\begin{lemma}
		\label{lemma:large_deviations_lower_bound_good_cubes_give_correct_probability_for_disc}
		For all $R$ large enough, uniformly as $n\to \infty$, we have
		\[
		\mathbf{1}_{\mathcal{G}_1} \, \bP\Big[ \, X_{j_3}^G \ge R^{\alpha+\eta/2} \mid \mathcal{F}_{j_3} \, \Big] \ge \mathbf{1}_{\mathcal{G}_1} \, \exp\Big(-R^{3\alpha-2+4\eta}\Big)
		\]
		where $\mathcal{G}_1$ is given by~\eqref{eq:def_of_good_event_large_deviations_lower_bound}.
	\end{lemma}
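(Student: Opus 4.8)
The plan is to confine, conditionally on $\mathcal{F}_{j_3}$, all the mass of each good boundary square to the part of that square lying inside $\bD_n(R)$. Concretely, consider the event
\[
\mathcal{B} = \Big\{\text{for every } Q\in\mathcal{V}_{j_3}^{G}, \text{ all } \mu_n(Q) \text{ points of } \mathcal{P}(Q) \text{ lie in } Q\cap\bD_n(R)\Big\}.
\]
I would first check the deterministic inclusion $\mathcal{G}_1\cap\mathcal{B}\subseteq\{X_{j_3}^G\ge R^{\alpha+\eta/2}\}$: if all points of $\mathcal{P}(Q)$ fall in $Q\cap\bD_n(R)$ then $\mu_n\big(Q\cap\bD_n(R)\big)=\mu_n(Q)$, while $n\,{\tt Leb}\big(Q\cap\bD_n(R)\big)=p(Q)\,\bE[\mu_n(Q)]$, so
\[
\Delta_n\big(Q\cap\bD_n(R)\big)=\big(1-p(Q)\big)\bE[\mu_n(Q)]+\Delta_n(Q)\ \ge\ 10^{-5}\,\bE[\mu_n(Q)]-R^{\eta/2},
\]
using $1-p(Q)\ge10^{-5}$ for $Q\in\mathcal{V}_{j_3}^G$ and $|\Delta_n(Q)|<R^{\eta/2}$ on $\mathcal{G}_1$. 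Since $\bE[\mu_n(Q)]=n/4^{j_3}\ge R^{2\alpha-2+2\eta}$ and $|\mathcal{V}_{j_3}^G|\ge\frac{c_1}{2}R^{2-\alpha-\eta}$, summing over $Q\in\mathcal{V}_{j_3}^G$ and using $\alpha>1$ (so $R^{\eta/2}\ll R^{2\alpha-2+2\eta}$) gives $X_{j_3}^G\gtrsim R^{\alpha+\eta}\ge R^{\alpha+\eta/2}$ for all $R$ large.

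Next I would estimate $\bP[\mathcal{B}\mid\mathcal{F}_{j_3}]$ from below. By Proposition~\ref{prop:self_similarity_and_conditional_independence}, conditionally on $\mathcal{F}_{j_3}$ the families $\{\mathcal{P}(Q):Q\in\mathcal{D}_{j_3}\}$ are mutually independent and $\mathcal{P}(Q)$ is an affine rescaling of $\nu_{\mu_n(Q),\beta}$, so
\[
\bP\big[\mathcal{B}\mid\mathcal{F}_{j_3}\big]=\prod_{Q\in\mathcal{V}_{j_3}^G}\bP_{\nu_{m_Q,\beta}}\big[\text{all points in }\widetilde{S}_Q\big],\qquad m_Q:=\mu_n(Q),
\]
where $\widetilde{S}_Q\subset[0,1]^2$ is the image of $Q\cap\bD_n(R)$ under the affine map $Q\to[0,1]^2$, so ${\tt Leb}(\widetilde{S}_Q)=p(Q)\ge10^{-5}$. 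For a single factor I would repeat the lower-bound argument of Claim~\ref{claim:estimates_on_partition_function}: writing $\bP_{\nu_{m,\beta}}[\text{all points in }\widetilde S_Q]=\tfrac{{\tt Leb}(\widetilde S_Q)^m}{Z(m,\beta)}\,\bE\big[e^{-\beta\mathcal{H}_m(U_1,\dots,U_m)}\big]$ with $U_i$ i.i.d.\ uniform on $\widetilde S_Q$ and applying Jensen's inequality, it remains to bound $\bE[\mathcal{H}_m(U)]=\binom{m}{2}\,\bE_{U,U'}[w(U,U')]$ from above. Since $w(x,y)\le C\big(1+\log^+\tfrac1{|x-y|}\big)$ for distinct $x,y\in[0,1]^2$ and $\iint_{[0,1]^2\times[0,1]^2}\log^+\tfrac1{|x-y|}\,dx\,dy<\infty$, and since ${\tt Leb}(\widetilde S_Q)\ge10^{-5}$, we get $\bE_{U,U'}[w(U,U')]\le C_0$ for an absolute constant $C_0$; combining this with $Z(m,\beta)\le\exp\big(-\tfrac{2\beta}{3}m^2+C_\beta m\log m\big)$ from Claim~\ref{claim:estimates_on_partition_function} yields $\bP_{\nu_{m,\beta}}[\text{all points in }\widetilde S_Q]\ge\exp(-C_\beta m^2)$ for all $m\ge2$. (An alternative is to locate a dyadic descendant of $Q$ of bounded relative depth that lies inside $Q\cap\bD_n(R)$ and invoke Lemma~\ref{lemma:overcrowding_probability_in_dyadic_cube}; the Jensen route is preferable precisely because it uses nothing about the shape of $\widetilde S_Q$ beyond its area.)

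Finally I would assemble the estimate. On $\mathcal{G}_1$ one has $m_Q=\bE[\mu_n(Q)]+\Delta_n(Q)\le 5R^{2\alpha-2+2\eta}$ for every $Q\in\mathcal{V}_{j_3}$, and by Claim~\ref{claim:gauss_type_estimate} applied to the rescaled disk (of radius $\lesssim R^{2-\alpha-\eta}$) we have $|\mathcal{V}_{j_3}^G|\le|\mathcal{V}_{j_3}|\lesssim R^{2-\alpha-\eta}$. Hence
\[
\mathbf{1}_{\mathcal{G}_1}\,\bP\big[\mathcal{B}\mid\mathcal{F}_{j_3}\big]\ \ge\ \mathbf{1}_{\mathcal{G}_1}\exp\Big(-C_\beta\!\!\sum_{Q\in\mathcal{V}_{j_3}^G}\!\! m_Q^2\Big)\ \ge\ \mathbf{1}_{\mathcal{G}_1}\exp\big(-C'_\beta R^{2-\alpha-\eta}\cdot R^{4\alpha-4+4\eta}\big)=\mathbf{1}_{\mathcal{G}_1}\exp\big(-C'_\beta R^{3\alpha-2+3\eta}\big),
\]
which is $\ge\mathbf{1}_{\mathcal{G}_1}\exp(-R^{3\alpha-2+4\eta})$ for all $R$ large, since $R^{\eta}\to\infty$. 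Combined with the inclusion $\mathcal{G}_1\cap\mathcal{B}\subseteq\{X_{j_3}^G\ge R^{\alpha+\eta/2}\}$ this proves the lemma. The only genuinely delicate point is the per-square bound $\bP_{\nu_{m,\beta}}[\text{all points in }\widetilde S_Q]\ge\exp(-C_\beta m^2)$: because $Q\cap\bD_n(R)$ can be a thin sliver, one cannot force the points into a macroscopic dyadic sub-square, which is why I would route the estimate through Jensen's inequality together with the partition-function bound, using only ${\tt Leb}(\widetilde S_Q)\ge10^{-5}$.
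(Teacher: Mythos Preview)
Your proof is correct and follows the same overall architecture as the paper: you consider the identical event $\mathcal{B}$ (the paper calls it $\mathcal{A}'$), establish the same inclusion $\mathcal{G}_1\cap\mathcal{B}\subset\{X_{j_3}^G\ge R^{\alpha+\eta/2}\}$ via the computation $(1-p(Q))\bE[\mu_n(Q)]+\Delta_n(Q)$, factor $\bP[\mathcal{B}\mid\mathcal{F}_{j_3}]$ using Proposition~\ref{prop:self_similarity_and_conditional_independence}, and assemble $\sum_Q m_Q^2\lesssim R^{3\alpha-2+3\eta}$ in the same way.

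The one genuine difference is the per-square lower bound $\bP_{\nu_{m,\beta}}[\text{all points in }\widetilde S_Q]\ge\exp(-C_\beta m^2)$. The paper observes that, because the disk is large compared to $Q$ and $p(Q)\ge 10^{-5}$, the region $Q\cap\bD_n(R)$ contains a dyadic descendant of $Q$ at bounded relative depth, and then invokes Lemma~\ref{lemma:overcrowding_probability_in_dyadic_cube} directly. Your Jensen route---bounding $\bE_{U,U'}[w(U,U')]\le p(Q)^{-2}\iint_{[0,1]^4}w\le 10^{10}\cdot\tfrac{4}{3}$ and combining with the upper bound on $Z(m,\beta)$ from Claim~\ref{claim:estimates_on_partition_function}---is a clean alternative that uses only the area lower bound and no geometry of $\widetilde S_Q$. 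Both arguments yield the same $\exp(-C_\beta m^2)$ with different constants, which is all that is needed. Your closing remark that ``one cannot force the points into a macroscopic dyadic sub-square'' is, however, not accurate for \emph{good} boundary squares: since $\partial\bD_n(R)$ is nearly a straight chord through $Q$ at this scale and cuts off relative area at least $10^{-5}$, a dyadic sub-square of bounded depth does fit inside $Q\cap\bD_n(R)$, which is precisely what the paper exploits.
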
 
	\begin{proof}
		We will consider the event
		\begin{equation*}
			\mathcal{A}^\prime = \Big\{ \forall Q\in \mathcal{V}_{j_3}^{G} \, : \, \mu_n(Q) = \mu_n\big(Q\cap \bD_{n}(R)\big) \Big\}\, .
		\end{equation*}
		By the definition~\eqref{eq:def_of_good_boundary_cube} of good boundary squares, for each $Q\in \mathcal{V}_{j_3}^{G}$ there exists a dyadic square of side-length $$\frac{1}{4} \times 10^{-5} \times 2^{-j_3}$$ which is strictly contained in $Q \cap  \bD_n(R)$. Thus, by combining the (conditional) self-similarity property (Proposition~\ref{prop:self_similarity_and_conditional_independence}) with Lemma~\ref{lemma:overcrowding_probability_in_dyadic_cube}, we have the lower bound
		\begin{equation*}
			\bP\Big[ \mu_n(Q) = \mu_n\big(Q\cap \bD_{n}(R)\big) \mid \mathcal{F}_{j_3} \, \Big] \ge \exp\Big(-c_\beta \big(\mu_n(Q)\big)^2\Big) \, .
		\end{equation*}
		Furthermore, Proposition~\ref{prop:self_similarity_and_conditional_independence} also asserts that conditional on $\mathcal{F}_{j_3}$, the random variables $\big\{\mu_n\big(Q\cap \bD_n(R)\big)\big\}_{Q\in \mathcal{V}_{j_3}^{G}}$ are independent, which implies that
		\begin{equation}
			\label{eq:large_deviations_lower_bound_good_cubes_give_correct_probability_for_disc_lower_bound_all_in}
			\bP\big[\mathcal{A}^\prime \mid \mathcal{F}_{j_3} \, \big] \ge \prod_{Q\in \mathcal{V}_{j_3}^{G}} \exp\Big(-c_\beta \big(\mu_n(Q)\big)^2\Big) = \exp\Big(-c_\beta \sum_{Q\in \mathcal{V}_{j_3}^{G}}\big(\mu_n(Q)\big)^2\Big) \, .
		\end{equation}
		In view of~\eqref{eq:large_deviations_lower_bound_good_cubes_give_correct_probability_for_disc_lower_bound_all_in}, the lemma would follow once we show that both
		\begin{equation}
			\label{eq:large_deviations_lower_bound_good_cubes_give_correct_probability_for_disc_lower_bound_sum_of_squares}
			\mathbf{1}_{\mathcal{G}_1} \sum_{Q\in \mathcal{V}_{j_3}^{G}}\big(\mu_n(Q)\big)^2 \ge \mathbf{1}_{\mathcal{G}_1}  \exp\Big(-R^{3\alpha-2 + 4\eta}\Big)
		\end{equation}
		and
		\begin{equation}
			\label{eq:large_deviations_lower_bound_good_cubes_give_correct_probability_for_disc_inclusion_of_events}
			\mathcal{G}_1 \cap \mathcal{A}^\prime \subset \mathcal{G}_1 \cap \Big\{X_{j_3}^G \ge R^{\alpha+\eta/2}  \Big\}
		\end{equation}
		hold for all $R$ large enough, uniformly as $n\to \infty$. Indeed, on the event $\mathcal{G}_1$ we have the lower bound
		\begin{equation*}
			\mu_n(Q) \ge n \, {\tt Leb} (Q) - R^{\eta/2} \ge R^{2\alpha-2+2\eta} - R^{\eta/2}
		\end{equation*}
		for all $Q\in \mathcal{V}_{j_3}$. In particular, as $\alpha>1$, we get that for all $R$ large enough
		\[
		\sum_{Q\in \mathcal{V}_{j_3}^{G}}\big(\mu_n(Q)\big)^2 \gtrsim |\mathcal{V}_{j_3}^{G}| \, R^{4\alpha-4 + 4\eta} \stackrel{\eqref{eq:lower_and_upper_bounds_on_good_bad_boundary_squares}}{\gtrsim} R^{2-\alpha-\eta} \, R^{4\alpha-4 + 4\eta} = R^{3\alpha-2 + 3\eta}
		\]
		which gives~\eqref{eq:large_deviations_lower_bound_good_cubes_give_correct_probability_for_disc_lower_bound_sum_of_squares}. To verify~\eqref{eq:large_deviations_lower_bound_good_cubes_give_correct_probability_for_disc_inclusion_of_events}, we note that on the event $\mathcal{G}_1 \cap \mathcal{A}^\prime$ we have
		\begin{align*}
			X_{j_3}^G & = \sum_{Q\in \mathcal{V}_{j_3}^{G}} \Delta_{n}\big(Q\cap \bD_n(R)\big) = \sum_{Q\in \mathcal{V}_{j_3}^{G}} \Big(\mu_{n}\big(Q\cap \bD_n(R)\big) - n \, {\tt Leb}\big(Q\cap \bD_n(R)\big)\Big) \\ &\stackrel{\text{by}~\mathcal{A}^\prime}{=} \sum_{Q\in \mathcal{V}_{j_3}^{G}} \Big(\mu_{n}(Q) - n \, {\tt Leb}\big(Q\cap \bD_n(R)\big)\Big) \stackrel{{\text{by}~\mathcal{G}_1}}{\ge} \sum_{Q\in \mathcal{V}_{j_3}^{G}} \Big( n \, {\tt Leb}\big(Q\cap \bD_n(R)^{c}\big) - R^{\eta/2}\Big) \, .
		\end{align*}
		By the definition of good boundary squares~\eqref{eq:def_of_good_boundary_cube}, we have
		\[
		\sum_{Q\in \mathcal{V}_{j_3}^{G}} n \, {\tt Leb}\big(Q\cap \bD_n(R)^{c}\big) \gtrsim |\mathcal{V}_{j_3}^{G}| \, R^{2\alpha-2+2\eta} \stackrel{\eqref{eq:lower_and_upper_bounds_on_good_bad_boundary_squares}}{\gtrsim} R^{2-\alpha-\eta} \, R^{2\alpha-2+2\eta} = R^{\alpha+\eta} \, .
		\]
		Recalling that $\alpha\ge 1$, we can combine the above with the naive bound
		\[
		\sum_{Q\in \mathcal{V}_{j_3}^{G}} R^{\eta/2} \leq R^{\eta/2} \, |\mathcal{V}_{j_3}^B| \stackrel{\eqref{eq:lower_and_upper_bounds_on_good_bad_boundary_squares}}{\lesssim}  R^{2-\alpha - \eta/2} \le R^{\alpha}
		\]
		and get that on the event $\mathcal{G}_1 \cap \mathcal{A}^\prime$, for all $R$ large enough we have 
		$
		X_{j_3}^G \ge R^{\alpha+\eta/2}\, .
		$
		This proves~\eqref{eq:large_deviations_lower_bound_good_cubes_give_correct_probability_for_disc_inclusion_of_events}, and together with~\eqref{eq:large_deviations_lower_bound_good_cubes_give_correct_probability_for_disc_lower_bound_all_in} and~\eqref{eq:large_deviations_lower_bound_good_cubes_give_correct_probability_for_disc_lower_bound_sum_of_squares} we are done.
	\end{proof}
	\subsection{Proof of Theorem~\ref{thm:Micro_JLM}: the lower bound for \texorpdfstring{$1<\alpha<2$}{1 < alpha < 2}}
	Consider the events 
	\[
	\mathcal{E}_B = \Big\{\big|X_{j_3}^B\big| \le R^{\alpha}\Big\}  \, , \qquad  \mathcal{E}_{G} = \Big\{X_{j_3}^G \ge R^{\alpha+\eta/2}\Big\} \, ,
	\]
	and recall the decomposition of $\Delta_{n}\big(\bD_n(R)\big)$ given by~\eqref{eq:large_deviations_lower_bound_decomposition_of_disc}. By~\eqref{eq:large_deviations_lower_bound_good_event_implies_small_inner_disc}, on the event $\mathcal{G}_1$ we have for all $R$ large enough
	\[
	\Big|\sum_{j=1}^{j_3} \sum_{Q\in \mathcal{U}_j} \Delta_n(Q)\Big| \le R^{\alpha-\eta/2} \, ,
	\]
	which implies that on $\mathcal{G}_1 \cap \mathcal{E}_B \cap \mathcal{E}_{G}$, we have 
	\[
	\Delta_n\big(\bD_n(R)\big) \ge R^{\alpha+\eta/2} \, .
	\]
	In particular, for all $R$ large, we have the inclusion		
	\begin{equation}
		\label{eq:large_deviations_lower_bound_event_of_large_disc}
		\Big\{\Delta_{n}\big(\bD_n(R)\big) \ge R^{\alpha} \Big\} \supseteq \mathcal{G}_1 \cap \mathcal{E}_B \cap \mathcal{E}_{G} \, .
	\end{equation}
	Furthermore, conditionally on $\mathcal{F}_{j_3}$, the random variables $X_{j_3}^B$ and $X_{j_3}^G$ are independent. Therefore, by the law of total expectation
	\begin{align*}
		\bP\Big[\mathcal{G}_1 \cap \mathcal{E}_B \cap \mathcal{E}_{G}\Big] &= \bE\Big[\mathbf{1}_{\mathcal{G}_1} \, \bP\big[ \mathcal{E}_B \cap \mathcal{E}_{G} \mid \mathcal{F}_{j_3} \, \big] \, \Big] = \bE\Big[\mathbf{1}_{\mathcal{G}_1} \, \bP\big[ \mathcal{E}_B \mid \mathcal{F}_{j_3} \, \big] \, \bP\big[ \mathcal{E}_G \mid \mathcal{F}_{j_3} \, \big] \, \Big] \\ &\stackrel{\text{Lemma}~\ref{lemma:large_deviations_lower_bound_bad_cubes_do_not_contribute}}{\ge} \frac{1}{2} \bE\Big[\mathbf{1}_{\mathcal{G}_1} \, \bP\big[ \mathcal{E}_G \mid \mathcal{F}_{j_3} \, \big] \, \Big]  \\ &\stackrel{\text{Lemma}~\ref{lemma:large_deviations_lower_bound_good_cubes_give_correct_probability_for_disc}}{\ge} \frac{1}{2}  \exp\Big(-R^{3\alpha-2+4\eta}\Big) \, \bP[\mathcal{G}_1] \, .
	\end{align*} 
	Claim~\ref{claim:large_deviations_lower_bound_good_event_is_likely} implies that for all $R$ large enough, 
	$
	\bP[\mathcal{G}_1] \ge \frac{1}{2}
	$
	uniformly as $n\to \infty$, and hence
	\[
	\bP\Big[\mathcal{G}_1 \cap \mathcal{E}_B \cap \mathcal{E}_{G}\Big] \ge \frac{1}{4} \exp\Big(-R^{3\alpha-2+4\eta}\Big) \, .
	\]
	In view of~\eqref{eq:large_deviations_lower_bound_event_of_large_disc}, this gives the desired lower bound in the range $1<\alpha<2$. 
	\qed
	\section{Lower bound for \texorpdfstring{$\frac{1}{2} < \alpha < 1$}{1/2< alpha < 1}}
	\label{sec:LB_moderate_deviations}
	Contrary to the lower bound from Section~\ref{sec:LB_large_deviations}, here we cannot just estimate from below the probability of a single explicit event that we construct, as the entropy contribution to the discrepancy is non-trivial in the range $\tfrac{1}{2}<\alpha<1$ (recall the basic intuition from Section~\ref{sec:ideas_from_the_proof}). To overcome this difficulty, we will reduce the desired lower bound to a question about ``moderate deviations'' for a sum on independent bounded random variables, and to deal with the latter we shall apply the a standard change of measure argument. The following proposition is tailor-made for our needs.
	\begin{proposition}
		\label{prop:moderate_deviations_lower_bound_bounded_random_variables}
		Let $X_1,\ldots,X_N$ be independent, mean-zero random variables and denote by $S_N = X_1+\ldots+X_N$. Assume there exists $\La<\infty$ and $\sigma>0$ so that
		\begin{equation*}
			\bP\big[|X_j| \le \La\big] = 1 \quad \text{and } \quad\bE\big[X_j^2\big]\ge \sigma>0,
		\end{equation*}
		for all $j=1,\ldots,N$. Then for all $\gamma\in\big(\frac{1}{2},1\big)$ and for all $\eps>0$ there exists $N_0=N_0(\eps,\gamma,\La,\sigma)$ so that for all $N\ge N_0$ we have
		\[
		\bP\Big[S_N \ge N^\gamma\Big] \ge \exp\Big(-N^{2\gamma-1+\eps}\Big) \, .
		\]
	\end{proposition}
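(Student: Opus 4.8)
The plan is to run the classical exponential change-of-measure (Cram\'er tilting) argument, calibrated so that the tilted mean of $S_N$ lands exactly at the moderate-deviations threshold. Write $M_j(t)=\bE[e^{tX_j}]$ and $\psi_j(t)=\log M_j(t)$ for $t\ge 0$; since $|X_j|\le\La$, each $\psi_j$ is smooth on $\bR$ with $\psi_j(0)=0$, $\psi_j'(0)=\bE[X_j]=0$, $\psi_j''(0)=\bE[X_j^2]\ge\sigma$, and $|\psi_j'''(t)|\le C_0(\La)$ uniformly in $t$ and $j$ (the third cumulant of a $[-\La,\La]$-valued variable is at most $(2\La)^3$). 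Hence there is $t_0=t_0(\La,\sigma)\in(0,1]$, independent of $j$ and $N$, with $\psi_j''(t)\ge\sigma/2$ — and therefore $\psi_j'(t)\ge(\sigma/2)\,t$ — on $[0,t_0]$. Setting $\Phi(t)=\sum_{j=1}^N\psi_j'(t)$, I note that $\Phi$ is continuous and strictly increasing with $\Phi(0)=0$ and $\Phi(t_0)\ge(\sigma/2)N t_0>2N^\gamma$ once $N$ is large (this is where $\gamma<1$ is used). I would then pick $t_N\in(0,t_0]$ with $\Phi(t_N)=2N^\gamma$; the chain $2N^\gamma=\Phi(t_N)\ge(\sigma/2)N t_N$ forces
\[
t_N\le \frac{4}{\sigma}\,N^{\gamma-1},
\]
which is the smallness of the tilting parameter that drives everything.

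Next I would pass to the tilted probability measure $\bQ$ defined by $\frac{d\bQ}{d\bP}=\prod_{j=1}^N e^{t_NX_j}/M_j(t_N)$. Under $\bQ$ the variables $X_j$ remain independent, with $\bE_{\bQ}[X_j]=\psi_j'(t_N)$, so $\bE_{\bQ}[S_N]=\Phi(t_N)=2N^\gamma$; moreover $\Var_{\bQ}(X_j)=\psi_j''(t_N)\le\La^2$ because $X_j$ is still supported in $[-\La,\La]$, hence $\Var_{\bQ}(S_N)\le\La^2N$. Chebyshev's inequality then yields
\[
\bQ\big[N^\gamma\le S_N\le 3N^\gamma\big]\ge 1-\La^2N^{1-2\gamma}\ge \tfrac12
\]
for all $N$ large, the point being that $\gamma>\tfrac12$.

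Finally I would undo the tilt. Since $M_j(t_N)\ge e^{t_N\bE[X_j]}=1$ by Jensen, $\prod_{j=1}^N M_j(t_N)\ge 1$, and therefore
\[
\bP\big[S_N\ge N^\gamma\big]\ \ge\ \bE_{\bQ}\!\Big[\mathbf 1_{\{N^\gamma\le S_N\le 3N^\gamma\}}\,e^{-t_NS_N}\prod_{j=1}^N M_j(t_N)\Big]\ \ge\ e^{-3t_NN^\gamma}\,\bQ\big[N^\gamma\le S_N\le 3N^\gamma\big].
\]
Inserting $t_N\le \tfrac{4}{\sigma}N^{\gamma-1}$ and the estimate of the previous paragraph, the right-hand side is at least $\tfrac12\exp(-12N^{2\gamma-1}/\sigma)$, which exceeds $\exp(-N^{2\gamma-1+\eps})$ as soon as $N\ge N_0(\eps,\gamma,\La,\sigma)$, proving the proposition.

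The one step requiring genuine care is the choice of $t_N$ and the bound $t_N=O(N^{\gamma-1})$: it uses \emph{both} hypotheses. The variance lower bound $\bE[X_j^2]\ge\sigma$ forces $\Phi$ to grow at least linearly near the origin, which is what makes the solution $t_N$ small; the boundedness $|X_j|\le\La$ is what gives the uniform-in-$j$ smoothness of $\psi_j$, hence the linear lower bound $\psi_j'(t)\ge(\sigma/2)t$ on an interval $[0,t_0]$ that does not depend on $j$ or $N$, and it also supplies the uniform bound $\Var_{\bQ}(X_j)\le\La^2$ needed for the Chebyshev step. Everything else is routine.
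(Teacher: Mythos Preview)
Your proof is correct and follows the same exponential change-of-measure (Cram\'er tilting) argument as the paper's own proof in the appendix. The only notable difference is that you solve for the tilt parameter via $\Phi(t_N)=2N^\gamma$, obtaining $t_N=O(N^{\gamma-1})$ and the slightly sharper bound $\tfrac12\exp(-CN^{2\gamma-1})$, whereas the paper simply fixes $\xi=N^{\gamma-1+\eta}$ and uses the wider window $(N^\gamma,N^{\gamma+2\eta})$; both are standard variants of the same technique.
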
 
	As we already mentioned, the proof follows from a change of measure argument, similar to the one applied in the proof of the lower bound in Cram\'er's theorem from large deviations theory, see for instance~\cite[Section~2.2]{DemboZeitouniLDP}. For completeness, we provide the proof of Proposition~\ref{prop:moderate_deviations_lower_bound_bounded_random_variables} below in an appendix, see Section~\ref{sec:moderate_deviations_lower_bound}.
	\subsection{Pushing the discrepancy to the boundary}
	Following the intuition presented in Section~\ref{sec:ideas_from_the_proof}, we expect the excess in the discrepancy to be present in a thin layer around the boundary of $\bD_n(R)$. We start by recalling some notations from Section~\ref{sec:UB_moderate_deviations} which we will use here. Let $\ell\ge 1$ be the integer~\eqref{eq:def_of_ell} and recall that $j_2>\ell$ is an integer such that
	\[
	\frac{1}{\sqrt{n}} \le 2^{-j_2} < \frac{2}{\sqrt{n}} \, ,
	\]
	see~\eqref{eq:def_of_j_2}. We further recall the exposure martingale with respect to the filtration $\{\mathcal{F}_k\}$, given by
	\begin{equation*}
		M_k = \bE\big[\Delta_n\big(\bD_n(R)\big) \mid \mathcal{F}_k\big] = \sum_{j=\ell}^{k} \sum_{Q\in \mathcal{U}_j} \Delta_{n}(Q) + \sum_{Q\in \mathcal{V}_{k}} p(Q)\Delta_n(Q) \, ,
	\end{equation*}
	and the decomposition $\Delta_{n}\big(\bD_n(R)\big) = M_{k} + B_{k}$, where
	\begin{equation*}
		B_{k} = \sum_{Q\in \mathcal{V}_{k}} \mu_n\big(Q\cap \bD_n(R)\big) - p(Q) \mu_n(Q) \, .
	\end{equation*}
	First, we show that with high probability, the martingale term $M_{j_2}$ does not contribute to the excess in the discrepancy.
	\begin{lemma}		\label{lemma:moderate_deviations_lower_bound_push_disc_to_boundary}
		We have
		\[
		\lim_{R\to \infty} \lim_{n\to \infty} \bP\Big[\big|M_{j_2}\big| \ge \sqrt{R} \, \log^2(R) \, \Big] = 0 \, .
		\]
	\end{lemma}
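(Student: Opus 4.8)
The plan is to prove this by the second moment method, the key input being Chatterjee's variance upper bound for $\mu_n(\bD_n(R))$, which I am allowed to use since it is recorded in the introduction~\cite{chatterjee}.

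First I would observe that $M_{j_2}$ is mean zero. Since ${\tt Leb}(\bD_n(R)) = \pi R^2/n$, the identity $\bE[\mu_n(U)] = n\,{\tt Leb}(U)$ gives $\bE\big[\Delta_n(\bD_n(R))\big] = \pi R^2 - \pi R^2 = 0$, and therefore $\bE[M_{j_2}] = \bE\big[\Delta_n(\bD_n(R))\big] = 0$ by the law of total expectation.

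Next, since conditional expectation is an $L^2$-contraction --- equivalently, by the law of total variance --- one has
\[
\Var\big(M_{j_2}\big) = \Var\Big(\bE\big[\mu_n(\bD_n(R)) \mid \mathcal{F}_{j_2}\big]\Big) \le \Var\big(\mu_n(\bD_n(R))\big) \le C_\beta \, R \, (\log R)^2
\]
for all $R$ large enough, uniformly as $n\to\infty$, the last inequality being Chatterjee's bound. Finally I would apply Chebyshev's inequality: since the threshold $\sqrt{R}\,\log^2(R)$ is, up to constants, of order $\log R$ standard deviations out,
\[
\bP\Big[\big|M_{j_2}\big| \ge \sqrt{R}\,\log^2(R)\Big] \le \frac{\Var(M_{j_2})}{R\,\log^4(R)} \le \frac{C_\beta}{\log^2(R)} \, ,
\]
which tends to $0$ as $R\to\infty$, uniformly in $n$; letting first $n\to\infty$ and then $R\to\infty$ yields the assertion.

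I do not expect any genuine obstacle here; the only point to watch is that every estimate be uniform in $n$, which is automatic because Chatterjee's variance estimate holds uniformly as $n\to\infty$. One could alternatively try to recycle the conditional moment generating function bound from the proof of Lemma~\ref{lemma:upper_bound_moderate_deviations_martingale_term}, but that route yields only a variance proxy of size $R\,\log^4 R$ --- one logarithm too weak to push the probability to zero at the threshold $\sqrt{R}\,\log^2 R$ --- so appealing to the variance bound directly is the efficient choice, and this also explains the precise shape of the threshold in the statement.
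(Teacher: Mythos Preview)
Your proof is correct. Both your argument and the paper's ultimately bound $\Var[M_{j_2}]$ by $C_\beta R\log^2 R$ and then apply Chebyshev; the difference is in how that variance bound is obtained. You observe that $M_{j_2}=\bE[\Delta_n(\bD_n(R))\mid\mathcal{F}_{j_2}]$ and invoke the $L^2$-contractivity of conditional expectation together with Chatterjee's \emph{disk} variance bound $\Var[\mu_n(\bD_n(R))]\le C_\beta R\log^2 R$ (quoted in the introduction). The paper instead expands $\Var[M_{j_2}]$ telescopically via the martingale increment identity $\Var[M_k]=\bE[\Var[M_k\mid\mathcal{F}_{k-1}]]+\Var[M_{k-1}]$, bounds each conditional increment using Chatterjee's \emph{dyadic-square} variance estimate~\cite[Lemma~3.8]{chatterjee}, and sums over $\ell<k\le j_2$. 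Your route is shorter and cleaner, since the disk variance bound is already available; the paper's route is a touch more self-contained in that it only uses the dyadic-cube input and effectively reproves the relevant piece of the disk bound along the way. Either way the final Chebyshev step is identical, and your remark about why the MGF estimate from Lemma~\ref{lemma:upper_bound_moderate_deviations_martingale_term} would fall one logarithm short is accurate.
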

	\begin{proof}
		By Chebyshev's inequality, the lemma would follow once we show that
		\begin{equation}
			\label{eq:moderate_deviations_lower_bound_martingale_term_variance_bound}
			\Var\big[M_{j_2}\big] \le C_\beta R\,  \log^2(R) \, ,
		\end{equation}
		uniformly as $n\to \infty$. To get~\eqref{eq:moderate_deviations_lower_bound_martingale_term_variance_bound}, we recall the variance bound on dyadic squares, proved by Chatterjee (see~\cite[Lemma~3.8]{chatterjee}); For any $j\ge 1$ let $Q\in \mathcal{D}_j$, then
		\begin{equation}
			\label{eq:variance_bound_on_dyadic_cubes}
			\Var\big[\mu_n(Q)\big] \le C_\beta \log^2\big(4^{-j+1} n + 3\big) \, .
		\end{equation}
		By the orthogonality of martingale increments, we have
		\begin{align}
			\nonumber \label{eq:martingale_propoerty_for_conditional_variance}
			\Var\big[M_k\big] &= \bE\big[\Var\big[M_k \mid \mathcal{F}_{k-1}\big]\big] + \Var\big[\bE\big[M_k \mid \mathcal{F}_{k-1}\big]\big] \\ & =\bE\big[\Var\big[M_k \mid \mathcal{F}_{k-1}\big]\big] + \Var\big[M_{k-1}\big] \, .
		\end{align}
		For all $k>\ell$, conditional on $\mathcal{F}_{k-1}$ the random variables $\{\mu_n(Q)\}_{Q\in \mathcal{U}_{k} \cup \mathcal{V}_k}$ are independent by Proposition~\ref{prop:self_similarity_and_conditional_independence}, unless they are both subsets of the same boundary square $Q^\prime \in \mathcal{V}_{k-1}$. Hence, the Cauchy-Schwarz inequality yields
		\begin{align*}
			\Var\big[M_k \mid \mathcal{F}_{k-1}\big] & = \sum_{Q^\prime\in \mathcal{V}_{k-1}} \Var \Big[\sum_{\substack{Q\subset Q^\prime \\ Q\in \mathcal{U}_{k} \cup \mathcal{D}_{k}}} p(Q) \mu_n(Q) \mid \mathcal{F}_{k-1} \, \Big] \\
			& \leq \sum_{Q^\prime\in \mathcal{V}_{k-1}} \bigg(\sum_{\substack{Q\subset Q^\prime \\ Q\in \mathcal{U}_{k} \cup \mathcal{D}_{k}}} \sqrt{\Var \Big[ \mu_n(Q) \mid \mathcal{F}_{k-1} \, \Big]} \, \bigg)^2 \, .
		\end{align*}
		Combining Proposition~\ref{prop:self_similarity_and_conditional_independence} with the bound~\eqref{eq:variance_bound_on_dyadic_cubes}, we see that
		\[
		\Var \Big[ \mu_n(Q) \mid \mathcal{F}_{k-1} \, \Big] \le C_\beta \log^2\big( \mu_n(Q^\prime) + 3 \big)  \, .
		\]
		Using the above together with the concavity of the map $x\mapsto \log^2(x+3)$, we arrive at the bound
		\begin{align*}
			\bE\big[\Var\big[M_k \mid \mathcal{F}_{k-1}\big]\big] & \le C_\beta \sum_{Q^\prime\in \mathcal{V}_{k-1}} \bE\big[\log^2\big( \mu_n(Q^\prime) + 3 \big)\big] \\
			& \le  C_\beta \sum_{Q^\prime\in \mathcal{V}_{k-1}} \log^2\big(4^{-k+1} n + 3\big) \le C_\beta \log^2(R) \, |\mathcal{V}_{k-1}|
		\end{align*}
		for all $k>\ell$. Plugging into~\eqref{eq:martingale_propoerty_for_conditional_variance} and iterating for $\ell<k\le j_2$, we arrive at the bound
		\begin{align*}
			\Var\big[M_k\big] &\le C_\beta \log^2(R) \sum_{k=\ell+1}^{j_2} |\mathcal{V}_{k}| + \Var\big[M_\ell\big]  \\ &\le C_\beta \log^2(R) \sum_{k=\ell+1}^{j_2} 2^{k-\ell} + \Var\big[M_\ell\big] \le C_\beta \log^2(R) R + \Var\big[M_\ell\big] \, . 
		\end{align*}
		Finally, by our choice of $\ell \ge 1$ given by~\eqref{eq:def_of_ell}, we have $|\mathcal{U}_\ell \cup \mathcal{V}_{\ell}|\le 10$ which, by combining~\eqref{eq:variance_bound_on_dyadic_cubes} and the Cauchy-Schwarz inequality, implies that
		\begin{align*}
			\Var\big[M_\ell\big] &= \bE\Big[\Big(\sum_{Q\in \mathcal{U}_\ell \cup \mathcal{V}_\ell} p(Q) \Delta_n(Q)\Big)^2\Big] \\ &\le \bE\Big[|\mathcal{U}_\ell \cup \mathcal{V}_\ell| \sum_{Q\in \mathcal{U}_\ell \cup \mathcal{V}_\ell} \Delta_n(Q)^2 \Big] = |\mathcal{U}_\ell \cup \mathcal{V}_\ell| \sum_{Q\in \mathcal{U}_\ell \cup \mathcal{V}_\ell} \Var\big[\mu_n(Q)\big] \le C_\beta \log^2(R) \, .
		\end{align*}
		Altogether, we have proved the variance bound~\eqref{eq:moderate_deviations_lower_bound_martingale_term_variance_bound} and the lemma follows. 
	\end{proof}
	\subsection{Boundary squares}
	To deal with the discrepancy coming from boundary squares, it will be convenient to work under the typical event that none of them contain too many points. Indeed, considering the event
	\begin{equation}
		\label{eq:moderate_deviations_lower_bound_boundary_cubes_are_typical}
		\mathcal{G}_2 = \Big\{ \forall Q\in \mathcal{V}_{j_2} \, : \, \mu_n(Q) \le R^{\eps} \Big\}\, ,
	\end{equation}
	we have the following simple claim.
	\begin{claim}
		\label{claim:moderate_deviations_lower_bound_boundary_cubes_are_typical_with_high_probability}
		For all $\eps>0$ we have $\displaystyle \lim_{R\to \infty} \lim_{n\to \infty} \bP\big[\mathcal{G}_2\big] = 1$.
	\end{claim}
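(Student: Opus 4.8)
The plan is to establish $\lim_{R\to\infty}\lim_{n\to\infty}\bP[\mathcal{G}_2] = 1$ by a union bound over boundary squares, after first passing to a slightly coarser dyadic scale. The obstruction to invoking Lemma~\ref{lemma:concentration_on_dyadic_cubes} directly at level $j_2$ is that there $\bE[\mu_n(Q)] = n/4^{j_2}\in[1,4)$, which is far below the threshold $L_0(\beta,\kappa)$ that lemma requires. To get around this I would work at the scale $j := j_2 - k$, where $k = k(R) := \lfloor\tfrac{\eps}{4}\log_2 R\rfloor$; then $L := n/4^{j} = 4^{k}\,(n/4^{j_2})$ satisfies $\tfrac14 R^{\eps/2}\le L\le 4R^{\eps/2}$ for \emph{every} $n$ (using $n/4^{j_2}\in[1,4)$), and $\ell < j < j_2$ once $n$ is large. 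In particular $L\ge L_0(\beta,\tfrac34)$ for $R$ large, so Lemma~\ref{lemma:concentration_on_dyadic_cubes} applied at level $j$ with $\kappa=\tfrac34$ (with $L=n/4^j$, which need not be an integer but on which the proof of that lemma imposes nothing) gives
\[
\bP\big[|\Delta_n(Q)|\ge L^{3/4}\big]\le\exp\big(-c_\beta L^{3/2}\big)\le\exp\big(-c_\beta' R^{3\eps/4}\big)
\]
for every $Q\in\mathcal{D}_j$, with $c_\beta,c_\beta'>0$.

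Next I would reduce the event $\mathcal{G}_2^c$ to this scale. If $Q\in\mathcal{V}_{j_2}$ and $Q'\in\mathcal{D}_j$ is its ancestor, then $Q\subset Q'$ forces $Q'$ to meet both $\bD_n(R)$ and $\bD_n(R)^c$, so $Q'\in\mathcal{V}_j$, and $\mu_n(Q)\le\mu_n(Q')$; hence $\mathcal{G}_2^c\subseteq\bigcup_{Q'\in\mathcal{V}_j}\{\mu_n(Q')\ge R^\eps\}$. Since $L\le 4R^{\eps/2}$, for $R$ large we have $R^\eps - L\ge\tfrac12 R^\eps\ge L^{3/4}$, so $\{\mu_n(Q')\ge R^\eps\}\subseteq\{\Delta_n(Q')\ge L^{3/4}\}$, and the displayed bound gives $\bP[\mu_n(Q')\ge R^\eps]\le\exp(-c_\beta' R^{3\eps/4})$. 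Finally, $|\mathcal{V}_j|\lesssim 2^{j-\ell}\le 2^{j_2-\ell}\lesssim R$ by Claim~\ref{claim:gauss_type_estimate}, so the union bound yields $\bP[\mathcal{G}_2^c]\lesssim R\exp(-c_\beta' R^{3\eps/4})$ uniformly in $n$; letting $n\to\infty$ and then $R\to\infty$ concludes.

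The only genuine subtlety is the choice of the intermediate scale $k(R)$. It must grow with $R$, so that $L\to\infty$ and the per-square bound $\exp(-c_\beta L^{3/2})$ outpaces the $\lesssim R$ squares in $\mathcal{V}_j$ in the union bound; but it must grow slowly enough that $L$ stays $\ll R^\eps$, so that $\{\mu_n(Q')\ge R^\eps\}$ really is a large deviation of $\mu_n(Q')$ away from its mean $L$. Taking $L$ to be a small fixed power of $R$, e.g.\ $L\asymp R^{\eps/2}$ as above, does both; any $\kappa\in(\tfrac12,1)$ works equally well in place of $\tfrac34$. Everything else is routine bookkeeping.
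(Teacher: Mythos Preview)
Your proof is correct and takes essentially the same approach as the paper: pass to an ancestor at a coarser dyadic scale where the expected count is a small power of $R$ (the paper chooses $L\asymp R^{\eps}$ and applies Lemma~\ref{lemma:concentration_on_dyadic_cubes} with $\kappa=\tfrac12$, you choose $L\asymp R^{\eps/2}$ with $\kappa=\tfrac34$), bound the per-square deviation probability there, and union bound over $\lesssim R$ boundary squares. The specific numerical choices differ but the argument is identical.
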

	\begin{proof}
		By our choice of $j_2$, for all $Q\in \mathcal{D}_{j_2}$ we have
		$
		\bE\big[\mu_n(Q)\big] \in [1,2)
		$
		and hence, if $\widetilde{Q}$ denotes the ancestor of $Q$ with 
		\[
		\bE\big[\mu_n\big(\widetilde{Q}\big)\big]\in \Big[\frac{R^{\eps}}{16},\frac{R^{\eps}}{4}\Big) \, ,
		\]
		which always exists since the expectations grow as multiples of four as we move up in the tree. We have the simple exclusions
		\[
		\Big\{ \mu_n(Q) \ge R^{\eps} \Big\} \subset \Big\{ \Delta_{n}(\widetilde{Q}) \ge \frac{1}{2}R^{\eps} \Big\} \subset \Big\{ \Delta_{n}(\widetilde{Q}) \ge R^{\eps/2} \Big\}
		\]
		for all $R$ large enough. By Lemma~\ref{lemma:concentration_on_dyadic_cubes}, we can bound the probability of the event on the right-hand side as
		\[
		\bP\Big[\Delta_{n}(\widetilde{Q}) \ge R^{\eps/2} \Big] \le \exp\Big(-c_{\eps,\beta} \, R^{\eps}\Big) \, .
		\]
		As $|\mathcal{V}_{j_2}| \lesssim R$, the desired claim follows from the union bound.
	\end{proof}
	Borrowing the terminology from Section~\ref{sec:LB_large_deviations}, we say that a square $Q\in \mathcal{V}_{j_2}$ is a \emph{good} boundary square if it satisfies~\eqref{eq:def_of_good_boundary_cube}. By Claim~\ref{claim:enough boundary_cubes_are_good}, there are at least
	$
	c_1 R
	$
	good boundary squares, with $c_1>0$ being an absolute constant.
	We will denote by $\mathcal{V}_{j_2}^{G}$, and by $\mathcal{V}_{j_2}^B = \mathcal{V}_{j_2} \setminus \mathcal{V}_{j_2}^G$. 
	
	To proceed, we will import a simple claim from~\cite{chatterjee}, essentially saying that typically there are many squares in $\mathcal{V}_{j_2}^G$ with a bounded number of points. By~\cite[Lemma~3.8]{chatterjee}, for all $Q\in \mathcal{D}_{j_2}$ we have
	\[
	\bE\big[\mu_n(Q)^2\big] \le L(\beta)
	\]
	where $L(\beta)$ is a positive integer that depends only on $\beta$. Denote by $\La = 1000 L(\beta)$, and consider the (random) set
	\begin{equation*}
		\mathcal{C}_0 = \Big\{ Q\in \mathcal{V}_{j_2}^G \, : \, 0< \mu_n(Q) \le \La \Big\} \, .
	\end{equation*} 
	We will also denote by $\mathcal{C}_1 = \mathcal{V}_{j_2}^G \setminus \mathcal{C}_0$, and so we have the decomposition
	\begin{equation}
		\label{eq:decomposition_of_boundary_cubes}
		\mathcal{V}_{j_2} = \mathcal{C}_0 \cup \mathcal{C}_1 \cup \mathcal{V}_{j_2}^B \, .
	\end{equation}
	\begin{lemma}
		\label{lemma:moderate_deviations_lower_bound_lemma_from_chatterjee}
		There exists $\delta=\delta(\beta)>0$ so that 
		\[
		\bP\big[|\mathcal{C}_0| \ge \delta R\big] \ge \frac{1}{4}
		\]
		for all $R$ large enough.
	\end{lemma}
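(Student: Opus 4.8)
The plan is to run a second-moment (Paley--Zygmund) argument, but only \emph{after conditioning} on a dyadic level a fixed number of generations above $j_2$. The conditioning is unavoidable: the counts $\{\mu_n(Q)\}_{Q\in\mathcal{D}_{j_2}}$ are correlated, so one cannot pass from a first-moment bound to a probability lower bound directly, and it is exactly the conditioning that encodes the ``entropy''. First I would record the one-square estimates. By~\eqref{eq:def_of_j_2} every $Q\in\mathcal{D}_{j_2}$ has $\bE[\mu_n(Q)]=n4^{-j_2}\in[1,4)$, and by hypothesis $\bE[\mu_n(Q)^2]\le L(\beta)$; Paley--Zygmund gives $\bP[\mu_n(Q)\ge1]\ge(\bE\mu_n(Q))^2/\bE[\mu_n(Q)^2]\ge 1/L(\beta)$, while Markov gives $\bP[\mu_n(Q)>\Lambda]\le L(\beta)/\Lambda^2$, which is far smaller since $\Lambda=1000L(\beta)$, so $\bP[0<\mu_n(Q)\le\Lambda]\ge c_2(\beta)>0$. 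The same two lines run conditionally: fix $m=m(\beta)$ (chosen below) and condition on $\mathcal{F}_{j_2-m}$; for $Q\in\mathcal{D}_{j_2}$ whose level-$(j_2-m)$ ancestor $A(Q)$ has $\mu_n(A(Q))\in[4^{m-1},4^{m+1}]$, Proposition~\ref{prop:self_similarity_and_conditional_independence} identifies the conditional law of $\mu_n(Q)$ with the count in a level-$m$ dyadic square of a hierarchical Coulomb gas with $\mu_n(A(Q))$ points, so $\bE[\mu_n(Q)\mid\mathcal{F}_{j_2-m}]=4^{-m}\mu_n(A(Q))\in[\tfrac14,4]$, and applying~\eqref{eq:variance_bound_on_dyadic_cubes} at level $m$ gives $\bE[\mu_n(Q)^2\mid\mathcal{F}_{j_2-m}]\le L'(\beta)$ for a constant $L'(\beta)$ independent of $m$. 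Since $\Lambda=1000L(\beta)$ is comfortably above $L'(\beta)$ (which one checks from the explicit form of~\eqref{eq:variance_bound_on_dyadic_cubes}), the same argument yields $\bP[0<\mu_n(Q)\le\Lambda\mid\mathcal{F}_{j_2-m}]\ge c_3(\beta)>0$ for every such $Q$.

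Next I would show that with probability at least $\tfrac34$ a definite proportion of the good boundary squares have a well-populated ancestor. For $A\in\mathcal{D}_{j_2-m}$ we have $\bE[\mu_n(A)]\in[4^m,4^{m+1})$ and, by~\eqref{eq:variance_bound_on_dyadic_cubes}, $\Var[\mu_n(A)]\lesssim_\beta m^2$, so Chebyshev gives $\bP[\mu_n(A)\notin[4^{m-1},4^{m+1}]]\le\epsilon_m$ with $\epsilon_m\to0$ as $m\to\infty$. Fix $m$ with $\epsilon_m\le\tfrac18$ and let $N_{\mathrm{bad}}=\#\{Q\in\mathcal{V}_{j_2}^G:\mu_n(A(Q))\notin[4^{m-1},4^{m+1}]\}$, an $\mathcal{F}_{j_2-m}$-measurable quantity with $\bE[N_{\mathrm{bad}}]\le\epsilon_m|\mathcal{V}_{j_2}^G|$. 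By Markov the event $E=\{N_{\mathrm{bad}}<\tfrac12|\mathcal{V}_{j_2}^G|\}\in\mathcal{F}_{j_2-m}$ has $\bP[E]\ge\tfrac34$, and on $E$ the ($\mathcal{F}_{j_2-m}$-measurable) set $\mathcal{V}^\ast$ of good boundary squares with ancestor in $[4^{m-1},4^{m+1}]$ has $|\mathcal{V}^\ast|\ge\tfrac12|\mathcal{V}_{j_2}^G|\ge\tfrac{c_1}{2}R$. Since $|\mathcal{C}_0|=\sum_{Q\in\mathcal{V}_{j_2}^G}\mathbf{1}_{\{0<\mu_n(Q)\le\Lambda\}}$, the conditional estimate of the previous paragraph gives $\bE[|\mathcal{C}_0|\mid\mathcal{F}_{j_2-m}]\ge c_3(\beta)|\mathcal{V}^\ast|\ge 2\delta R$ on $E$, where $\delta:=\tfrac14 c_1 c_3(\beta)$.

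Finally I would add a conditional concentration step and conclude. Writing $|\mathcal{C}_0|=\sum_{A\in\mathcal{V}_{j_2-m}}\zeta_A$ with $\zeta_A=\#\{Q\in\mathcal{V}_{j_2}^G:Q\subset A,\ 0<\mu_n(Q)\le\Lambda\}$ (only $A\in\mathcal{V}_{j_2-m}$ occur, since maximal and exterior squares contain no level-$j_2$ boundary square), Proposition~\ref{prop:self_similarity_and_conditional_independence} makes the $\zeta_A$ conditionally independent given $\mathcal{F}_{j_2-m}$; moreover $0\le\zeta_A\le K2^m$ because a near-straight chord of $\partial\bD_n(R)$ through a box of side $\asymp 2^m/\sqrt n$ meets only $\lesssim 2^m$ squares of $\mathcal{D}_{j_2}$, and $|\mathcal{V}_{j_2-m}|\lesssim R/2^m$ by Claim~\ref{claim:gauss_type_estimate}. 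Hoeffding's inequality~\eqref{eq:Hoeffding_ineq} (conditionally on $\mathcal{F}_{j_2-m}$) then gives $\bP[\,\big||\mathcal{C}_0|-\bE[|\mathcal{C}_0|\mid\mathcal{F}_{j_2-m}]\big|\ge\delta R\mid\mathcal{F}_{j_2-m}\,]\le 2\exp(-c_\beta\,2^{-m}\delta^2 R)$, which tends to $0$ as $R\to\infty$ because $m$ and $\delta$ depend only on $\beta$. On $E$ we have $\bE[|\mathcal{C}_0|\mid\mathcal{F}_{j_2-m}]\ge 2\delta R$, so on $E$ the conditional probability that $|\mathcal{C}_0|\ge\delta R$ is at least $\tfrac12$ for all $R$ large; integrating over $E$ gives $\bP[|\mathcal{C}_0|\ge\delta R]\ge\bE[\mathbf{1}_E\,\bP[|\mathcal{C}_0|\ge\delta R\mid\mathcal{F}_{j_2-m}]]\ge\tfrac12\bP[E]\ge\tfrac38\ge\tfrac14$, which is the claim.

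The step I expect to be the delicate one is the choice of $m$: it must be large enough that the ancestor counts $\mu_n(A)$ concentrate (so that a definite fraction of ancestors lie in the window $[4^{m-1},4^{m+1}]$, giving the lower bound on the conditional mean), yet it must be a constant in $\beta$ so that the Hoeffding fluctuation of order $\sqrt{2^{m}R}$ remains $o(R)$; the interplay between these two requirements, together with the (routine but needed) verification that the cutoff $\Lambda=1000L(\beta)$ dominates the conditional second-moment constant $L'(\beta)$, is the heart of the argument, the rest being bookkeeping with Paley--Zygmund, Markov, Hoeffding and the geometric counting from Section~\ref{sec:preliminaries}.
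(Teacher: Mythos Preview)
Your approach is correct and genuinely different from the paper's. The paper never conditions on an intermediate level; instead it works pathwise with the averages $p_1 = |\mathcal{V}_{j_2}^G|^{-1}\sum_Q \mu_n(Q)$ and $p_2 = |\mathcal{V}_{j_2}^G|^{-1}\sum_Q \mu_n(Q)^2$, observing that by Cauchy--Schwarz the fraction $q_1$ of nonempty good boundary squares satisfies $q_1 \ge p_1^2/p_2$ almost surely. It then shows $\bP\big[p_1 \ge \tfrac12,\ p_2 \le 4L(\beta)\big] \ge \tfrac12$ (the first via Chebyshev together with Chatterjee's variance bound for linear statistics, \cite[Theorem~3.10]{chatterjee}, which gives $\Var\big[\sum_{Q\in\mathcal{V}_{j_2}^G}\mu_n(Q)\big]\lesssim R\log^2 R$; the second via Markov), controls the large-count fraction $q_2$ by $\bE[q_2]\le 2/\Lambda$ and another Markov, and intersects. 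This is considerably shorter, but it leans on the global linear-statistics variance bound. Your conditioning-plus-Hoeffding route is longer but uses only the single-square bound~\eqref{eq:variance_bound_on_dyadic_cubes} and the self-similarity of Proposition~\ref{prop:self_similarity_and_conditional_independence}, which is a more self-contained set of inputs.

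One small slip to fix: your window $[4^{m-1},4^{m+1}]$ has its upper endpoint equal to $\sup_n \bE[\mu_n(A)]$, so $\bP[\mu_n(A)>4^{m+1}]$ need not tend to $0$ with $m$ uniformly in $n$ (the gap between the mean and the upper endpoint can be $O(1)$ while the standard deviation is $\asymp m$). Widen the window to, say, $[4^{m-1},4^{m+2}]$; then Chebyshev gives $\epsilon_m\to 0$ as claimed, the conditional mean of $\mu_n(Q)$ stays in $[\tfrac14,16]$, and the resulting $L'(\beta)$ is still at most an absolute constant times $L(\beta)$, so the comparison with $\Lambda=1000\,L(\beta)$ that you flag as delicate does survive.
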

	Lemma~\ref{lemma:moderate_deviations_lower_bound_lemma_from_chatterjee} was essentially proved in Chatterjee~\cite{chatterjee}, see relation (3.8) therein (although the formulation there is slightly different). For the convenience of the reader, we provide the proof below.
	\begin{proof}
		Recall that $j_2\ge \ell$ is given by~\eqref{eq:def_of_j_2} and that $L(\beta)$ is a constant such that $$\bE\big[\mu_n(Q)^2\big] \le L(\beta)$$ for all $Q\in \mathcal{D}_{j_2}$. Recalling that $\mathcal{V}_{j_2}^G$ are all the good boundary squares in $\mathcal{D}_{j_2}$, we set
		\begin{align*}
			&p_1 = \frac{1}{|\mathcal{V}_{j_2}^G|} \sum_{Q\in \mathcal{V}_{j_2}^G} \mu_n(Q) \, , \qquad \qquad \quad \ \,  p_2 = \frac{1}{|\mathcal{V}_{j_2}^G|} \sum_{Q\in \mathcal{V}_{j_2}^G} \mu_n(Q)^2 \, , \\ & q_1 = \frac{\#\big\{Q\in \mathcal{V}_{j_2}^G \, : \, \mu_n(Q)>0 \big\}}{|\mathcal{V}_{j_2}^G|} \, , \qquad q_2 = \frac{\#\big\{Q\in \mathcal{V}_{j_2}^G \, : \, \mu_n(Q) > \La \big\}}{|\mathcal{V}_{j_2}^G|}\, , 
		\end{align*}
		and note the relation $|\mathcal{C}_0| = |\mathcal{V}_{j_2}^G| \big(q_1-q_2\big)$. By our choice of $j_2$, we have $$\bE[p_1] = n 4^{-j_2} \ge 1 \, , $$
		whereas Theorem 3.10 in~\cite{chatterjee} implies that 
		\[
		\Var\big[p_1\big] \stackrel{\text{Claim}~\ref{claim:enough boundary_cubes_are_good}}{\lesssim} \frac{1}{R^2} \Var\Big[\mu_n\Big(\bigcup_{Q\in \mathcal{V}_{j_2}^G} Q\Big)\Big] \lesssim \frac{\log^2(R)}{R}\, .
		\] 
		With that, Chebyshev's inequality implies that for all large enough $R$ 
		\[
		\bP\Big[p_1 \ge \frac{1}{2} \, \Big] \ge 0.99 \, .
		\]
		Furthermore, we have $\bE\big[p_2\big] \le L(\beta)$, which implies that
		$
		\bP\big[p_2 \ge 4 L(\beta)\big] \le \frac{1}{4} \, .
		$
		We conclude that
		\begin{equation*}
			\bP\Big[p_1 \ge \frac{1}{2} \, , p_2 < 4L(\beta) \Big] \ge \frac{1}{2} 
		\end{equation*}
		for all $R$ large enough. By the Cauchy-Schwarz inequality, we see that
		\[
		q_1 \ge \frac{p_1^2}{p_2}
		\]
		which in turn implies that
		\begin{equation}
			\label{eq:lower_bound_on_tail_of_q_1}
			\bP\Big[q_1 \ge \frac{1}{16 L(\beta)} \Big] \ge \bP\Big[p_1 \ge \frac{1}{2} \, , p_2 < 4L(\beta) \Big] \ge \frac{1}{2}\, . 
		\end{equation}
		Next, we turn to show that typically $q_2$ is not too large. Indeed, Markov's inequality implies that
		\[
		\bE[q_2] = \frac{1}{|\mathcal{V}_{j_2}^G|} \sum_{Q\in \mathcal{V}_{j_2}^G} \bP\big[\mu_n(Q) > \La\big] \le \frac{2}{\La}\, ,
		\] 
		which gives
		\[
		\bP\Big[q_2 \ge \frac{10}{\La} \, \Big] \le \frac{1}{5} \, .
		\]
		Combining with~\eqref{eq:lower_bound_on_tail_of_q_1}, we see that
		\[
		\bP\Big[q_1 \ge \frac{1}{16 L(\beta)} \, , \, q_2 < \frac{10}{\La} \, \Big]  \ge \frac{1}{4}\, .
		\]
		To conclude the proof of the lemma, it remains to observe that by our choice of $\La = 1000 L(\beta)$, on the event $\big\{q_1 \ge \frac{1}{16 L(\beta)} \, , \, q_2 < \frac{10}{\La} \big\}$ we have
		\[
		|\mathcal{C}_0| = |\mathcal{V}_{j_2}^G| \big(q_1-q_2\big) \stackrel{\text{Claim}~\ref{claim:enough boundary_cubes_are_good}}{\gtrsim} R \, \Big(\frac{1}{16 L(\beta)} - \frac{10}{\La}\Big) \ge \frac{R}{32 L(\beta)} 
		\]
		and we are done.
	\end{proof}
	\subsection{Creating the discrepancy from the boundary}
	We wish to estimate from below the probability that the boundary term $B_{j_2}$ is large. To do so, we split the boundary contribution into two different sums
	\begin{equation*}
		B_{j_2}^\prime = \sum_{Q\in \mathcal{C}_0} \mu_n\big(Q\cap \bD_n(R)\big) - p(Q) \mu_n(Q) \qquad \text{and} \qquad B_{j_2}^{\prime\prime} = \sum_{Q\in \mathcal{C}_1\cup \mathcal{V}_{j_2}^B} \mu_n\big(Q\cap \bD_n(R)\big) - p(Q) \mu_n(Q) \, .
	\end{equation*}
	By~\eqref{eq:decomposition_of_boundary_cubes}, we have $B_{j_2} = B_{j_2}^\prime + B_{j_2}^{\prime\prime}$. Furthermore, by Proposition~\ref{prop:self_similarity_and_conditional_independence}, conditional on $\mathcal{F}_{j_2}$ the random variables $B_{j_2}^\prime$ and $B_{j_2}^{\prime\prime}$ are independent. 
	\begin{lemma}
		\label{lemma:moderate_deviations_lower_bound_bad_boundary_does_not_contribute}
		For all $\eps>0$ and for all $R$ large enough, we have
		\[
		\mathbf{1}_{\mathcal{G}_2} \, \bP\Big[ \big|B_{j_2}^{\prime\prime}\big| < R^{\frac{1}{2} + 2\eps} \mid \mathcal{F}_{j_2} \, \Big] \ge \frac{1}{2} \, \mathbf{1}_{\mathcal{G}_2}
		\]
		where $\mathcal{G}_2$ is given by~\eqref{eq:moderate_deviations_lower_bound_boundary_cubes_are_typical}.
	\end{lemma}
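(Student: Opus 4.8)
The plan is a conditional second-moment estimate. Recall that, conditionally on $\mathcal{F}_{j_2}$, Proposition~\ref{prop:self_similarity_and_conditional_independence} guarantees that the random sets $\{\mathcal{P}(Q) : Q\in \mathcal{D}_{j_2}\}$ are mutually independent, so in particular the random variables $\{\mu_n(Q\cap\bD_n(R)) : Q\in \mathcal{C}_1\cup\mathcal{V}_{j_2}^B\}$ are independent given $\mathcal{F}_{j_2}$. Moreover, since conditionally on $\mathcal{F}_{j_2}$ the points inside $Q$ are a scaled copy of $\nu_{\mu_n(Q),\beta}$, we have $\bE[\mu_n(Q\cap\bD_n(R))\mid\mathcal{F}_{j_2}] = p(Q)\mu_n(Q)$, i.e.\ each summand of $B_{j_2}^{\prime\prime}$ is conditionally mean-zero. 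Thus $B_{j_2}^{\prime\prime}$ is, conditionally on $\mathcal{F}_{j_2}$, a sum of independent mean-zero random variables.

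First I would bound the conditional variance of each term: the random variable $\mu_n(Q\cap\bD_n(R))$ takes values in $[0,\mu_n(Q)]$, so $\Var[\mu_n(Q\cap\bD_n(R))\mid\mathcal{F}_{j_2}]\le \mu_n(Q)^2/4$; on the event $\mathcal{G}_2$ (which lies in $\mathcal{F}_{j_2}$) this is at most $R^{2\eps}/4$ for every $Q\in\mathcal{V}_{j_2}$. Next I would count the summands: by Claim~\ref{claim:gauss_type_estimate} (applied after the usual rescaling, as in the bound $|\mathcal{V}_k|\lesssim 2^{k-\ell}$ together with $2^{j_2-\ell}\lesssim R$) we have $|\mathcal{C}_1|+|\mathcal{V}_{j_2}^B|\le |\mathcal{V}_{j_2}|\lesssim R$. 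Combining conditional independence with these two bounds,
\[
\mathbf{1}_{\mathcal{G}_2}\,\Var\big[B_{j_2}^{\prime\prime}\mid\mathcal{F}_{j_2}\big] = \mathbf{1}_{\mathcal{G}_2}\!\!\sum_{Q\in\mathcal{C}_1\cup\mathcal{V}_{j_2}^B}\!\!\Var\big[\mu_n(Q\cap\bD_n(R))\mid\mathcal{F}_{j_2}\big] \le C\, R^{1+2\eps}\,\mathbf{1}_{\mathcal{G}_2}
\]
for some $C=C(\beta)>0$.

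Finally I would apply Chebyshev's inequality conditionally on $\mathcal{F}_{j_2}$ with threshold $t=R^{1/2+2\eps}$, giving
\[
\mathbf{1}_{\mathcal{G}_2}\,\bP\Big[\big|B_{j_2}^{\prime\prime}\big|\ge R^{1/2+2\eps}\mid\mathcal{F}_{j_2}\Big] \le \mathbf{1}_{\mathcal{G}_2}\,\frac{\Var[B_{j_2}^{\prime\prime}\mid\mathcal{F}_{j_2}]}{R^{1+4\eps}} \le \frac{C}{R^{2\eps}}\,\mathbf{1}_{\mathcal{G}_2},
\]
which is $\le \tfrac12\mathbf{1}_{\mathcal{G}_2}$ for all $R$ large enough; taking complements gives the claimed lower bound. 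There is no real obstacle here — the only points requiring care are invoking the conditional independence of the per-square contributions correctly (it holds conditionally on $\mathcal{F}_{j_2}$, even for squares sharing a parent in $\mathcal{D}_{j_2-1}$), and making sure the $R^\eps$ cap from $\mathcal{G}_2$ and the $O(R)$ count of boundary squares combine to leave a power of $R^{-2\eps}$ to spare.
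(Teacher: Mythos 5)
Your proposal is correct and follows essentially the same route as the paper: condition on $\mathcal{F}_{j_2}$, use Proposition~\ref{prop:self_similarity_and_conditional_independence} to get independence and mean-zero of the per-square terms, bound the conditional variance by $\mu_n(Q)^2$ per square (the paper uses the cruder bound without the $1/4$), invoke the $R^{\eps}$ cap from $\mathcal{G}_2$ and the $|\mathcal{V}_{j_2}|\lesssim R$ count, and finish with Chebyshev. No gaps.
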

	\begin{proof}
		By Proposition~\ref{prop:self_similarity_and_conditional_independence}, conditional on $\mathcal{F}_{j_2}$ the random variables 
		\[
		\Big\{\mu_n\big(Q\cap \bD_n(R)\big) - p(Q) \mu_n(Q)\Big\}_{Q\in \mathcal{V}_{j_2}}
		\]
		are independent and all having mean-zero. In particular, we get the bound
		\begin{multline*}
			\Var\Big[\sum_{Q\in \mathcal{C}_1\cup \mathcal{V}_{j_2}^B} \mu_n\big(Q\cap \bD_n(R)\big) - p(Q) \mu_n(Q) \mid \mathcal{F}_{j_2} \, \Big] \\  = \sum_{Q\in \mathcal{C}_1\cup \mathcal{V}_{j_2}^B} \Var\Big[ \mu_n\big(Q\cap \bD_n(R)\big) - p(Q) \mu_n(Q) \mid \mathcal{F}_{j_2} \, \Big] \le \sum_{Q\in \mathcal{C}_1\cup \mathcal{V}_{j_2}^B} \mu_n(Q)^2\, ,
		\end{multline*}
		which, by the definition of the event $\mathcal{G}_2$, gives
		\[
		\mathbf{1}_{\mathcal{G}_2} \, \Var\Big[\sum_{Q\in \mathcal{C}_1\cup \mathcal{V}_{j_2}^B} \mu_n\big(Q\cap \bD_n(R)\big) - p(Q) \mu_n(Q) \mid \mathcal{F}_{j_2} \, \Big] \le R^{2\eps} \, |\mathcal{V}_{j_2}| \lesssim R^{1+2\eps}\, .
		\]
		Chebyshev's inequality gives
		\[
		\mathbf{1}_{\mathcal{G}_2} \, \bP\Big[ \big|B_{j_2}^{\prime\prime}\big| \ge R^{\frac{1}{2} + 2\eps} \mid \mathcal{F}_{j_2} \, \Big] \leq R^{-1-4\eps} \, \mathbf{1}_{\mathcal{G}_2} \, \Var\big[B_{j_2}^{\prime\prime}\mid \mathcal{F}_{j_2}\big] \lesssim R^{-2\eps}\, ,
		\]
		and the lemma follows by taking $R$ to be large enough.
	\end{proof}
	Finally, we apply our moderate deviations lower bound (Proposition~\ref{prop:moderate_deviations_lower_bound_bounded_random_variables}) to show that $B_{j_2}^\prime$ contributes the correct probability to the desired lower bound. 
	\begin{lemma}
		\label{lemma:moderate_deviations_lower_bound_good_boundary_correct_probabaility}
		For all $\eps>0$ and for all $R$ large enough, uniformly as $n\to \infty$, we have
		\[
		\mathbf{1}_{\{|\mathcal{C}_0| \ge \delta R \}} \, \bP\Big[ B_{j_2}^{\prime} \ge  R^{\alpha+\eps}  \mid \mathcal{F}_{j_2} \, \Big] \ge \mathbf{1}_{\{|\mathcal{C}_0| \ge \delta R \}} \, \exp\Big(-R^{2\alpha-1+6\eps}\Big)
		\]
		where $\delta>0$ is the constant from Lemma~\ref{lemma:moderate_deviations_lower_bound_lemma_from_chatterjee}.
	\end{lemma}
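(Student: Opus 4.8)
The plan is to observe that, conditional on $\mathcal{F}_{j_2}$, the quantity $B_{j_2}^\prime=\sum_{Q\in\mathcal{C}_0}\big(\mu_n(Q\cap\bD_n(R))-p(Q)\mu_n(Q)\big)$ is a sum of independent, bounded, non-degenerate, mean-zero random variables, and then to invoke Proposition~\ref{prop:moderate_deviations_lower_bound_bounded_random_variables} pathwise on the event $\{|\mathcal{C}_0|\ge\delta R\}$. First I would record, via Proposition~\ref{prop:self_similarity_and_conditional_independence}, that conditional on $\mathcal{F}_{j_2}$ the summands $X_Q:=\mu_n(Q\cap\bD_n(R))-p(Q)\mu_n(Q)$ over $Q\in\mathcal{C}_0$ are independent and mean zero, and that by the definition of $\mathcal{C}_0$ one has the deterministic bound $|X_Q|\le 2\mu_n(Q)\le 2\La$.

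The part I expect to require the most care is a uniform lower bound $\bE[X_Q^2\mid\mathcal{F}_{j_2}]\ge\sigma(\beta)>0$ for every $Q\in\mathcal{C}_0$, with $\sigma$ independent of the realization of $\mathcal{F}_{j_2}$. To get this I would use that every $Q\in\mathcal{C}_0$ is a \emph{good} boundary square, so---exactly as in the proof of Lemma~\ref{lemma:large_deviations_lower_bound_good_cubes_give_correct_probability_for_disc}---both $Q\cap\bD_n(R)$ and $Q\cap\bD_n(R)^c$ contain a dyadic sub-square of $Q$ at some absolute depth $k_0$ below $Q$; call these $Q_1^\star$ and $Q_2^\star$. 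Conditional on $\mathcal{F}_{j_2}$, Proposition~\ref{prop:self_similarity_and_conditional_independence} identifies the law of $\mathcal{P}(Q)$ with a scaled copy of $\nu_{\mu_n(Q),\beta}$, and Lemma~\ref{lemma:overcrowding_probability_in_dyadic_cube} then shows that the event that all $\mu_n(Q)$ points of $\mathcal{P}(Q)$ lie in $Q_i^\star$ has conditional probability at least $\exp(-C_\beta k_0\mu_n(Q)^2)\ge\exp(-C_\beta k_0\La^2)=:p_0(\beta)>0$ for $i=1,2$. On the first event $X_Q=(1-p(Q))\mu_n(Q)$ and on the second $X_Q=-p(Q)\mu_n(Q)$, so $X_Q$ takes, conditionally, two values at distance $\mu_n(Q)\ge 1$ with probability $\ge p_0$ each; a one-line computation (minimizing $(a-c)^2+(b-c)^2$ over $c$) then gives $\bE[X_Q^2\mid\mathcal{F}_{j_2}]\ge\Var[X_Q\mid\mathcal{F}_{j_2}]\ge p_0/2=:\sigma$.

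With these three facts I would, on $\{|\mathcal{C}_0|\ge\delta R\}$, apply Proposition~\ref{prop:moderate_deviations_lower_bound_bounded_random_variables} to the conditional law of $B_{j_2}^\prime$ given $\mathcal{F}_{j_2}$, with $N=|\mathcal{C}_0|$, bound $2\La$, variance floor $\sigma$, exponent $\gamma=\alpha+2\eps$ (legitimate since we may assume $\eps$ small enough that $\alpha+2\eps<1$), and error parameter $\eps$; since $\delta R\le N\le|\mathcal{V}_{j_2}|\le CR$ by Claim~\ref{claim:gauss_type_estimate}, one has $N\ge N_0$ once $R$ is large, so $\mathbf{1}_{\{|\mathcal{C}_0|\ge\delta R\}}\,\bP[B_{j_2}^\prime\ge N^\gamma\mid\mathcal{F}_{j_2}]\ge\mathbf{1}_{\{|\mathcal{C}_0|\ge\delta R\}}\,\exp(-N^{2\gamma-1+\eps})$. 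Finally I would do the bookkeeping: $N\ge\delta R$ forces $N^\gamma\ge(\delta R)^{\alpha+2\eps}\ge R^{\alpha+\eps}$ for $R$ large, hence $\{B_{j_2}^\prime\ge N^\gamma\}\subset\{B_{j_2}^\prime\ge R^{\alpha+\eps}\}$; and $N\le CR$ forces $N^{2\gamma-1+\eps}=N^{2\alpha-1+5\eps}\le(CR)^{2\alpha-1+5\eps}\le R^{2\alpha-1+6\eps}$ for $R$ large. Combining gives the claimed inequality. The only genuine subtlety beyond routine estimation is making sure the constants $2\La$, $\sigma$ and $N_0$ entering Proposition~\ref{prop:moderate_deviations_lower_bound_bounded_random_variables} are uniform in the $\mathcal{F}_{j_2}$-realization, which the argument above guarantees.
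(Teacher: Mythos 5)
Your argument is correct and follows the same route as the paper: condition on $\mathcal{F}_{j_2}$, note that $B_{j_2}'$ is a sum of $|\mathcal{C}_0|$ independent, mean-zero, uniformly bounded and non-degenerate terms, apply Proposition~\ref{prop:moderate_deviations_lower_bound_bounded_random_variables} pathwise with $N=|\mathcal{C}_0|$ and $\gamma=\alpha+2\eps$, then convert $N$-powers into $R$-powers using $\delta R\le N\lesssim R$. The only place you depart from the paper is in establishing the uniform conditional variance floor $\Var[\mu_n(Q\cap\bD_n(R))\mid\mathcal{F}_{j_2}]\ge\sigma(\beta)>0$ for $Q\in\mathcal{C}_0$: the paper cites this directly from Chatterjee (\cite[Lemma~3.15]{chatterjee}), whereas you re-derive it from scratch by combining Proposition~\ref{prop:self_similarity_and_conditional_independence}, Lemma~\ref{lemma:overcrowding_probability_in_dyadic_cube} and the geometry of good squares. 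Your version is self-contained and correct (and actually one of the two events you use already suffices, since $\bE[X_Q^2\mid\mathcal{F}_{j_2}]\ge p_0\,(1-p(Q))^2\mu_n(Q)^2\ge 10^{-10}p_0$ on the inside-overcrowding event alone, given $p(Q)\le 1-10^{-5}$ and $\mu_n(Q)\ge 1$), so this is a harmless and arguably pedagogically cleaner substitute for the citation.
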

	\begin{proof}
		When $|\mathcal{C}_0| \ge \delta R$, we have for $R$ large enough that $|\mathcal{C}_0|^{\alpha+2\eps} \ge R^{\alpha+\eps}$, and hence
		\begin{equation}
			\label{eq:moderate_deviations_lower_bound_lower_bound_on_good_boundary_cubes_contribution_with_random_size}
			\mathbf{1}_{\{|\mathcal{C}_0| \ge \delta R \}} \, \bP\Big[ B_{j_2}^{\prime} \ge  R^{\alpha+\eps}  \mid \mathcal{F}_{j_2} \, \Big] \ge \mathbf{1}_{\{|\mathcal{C}_0| \ge \delta R \}} \, \bP\Big[ B_{j_2}^{\prime} \ge  |\mathcal{C}_0|^{\alpha+2\eps}  \mid \mathcal{F}_{j_2} \, \Big] \, .
		\end{equation}
		To give a lower bound on the right-hand side, we note once more that conditional on $\mathcal{F}_{j_2}$, 
		\[
		B_{j_2}^\prime = \sum_{Q\in \mathcal{C}_0} \mu_n\big(Q\cap \bD_n(R)\big) - p(Q) \mu_n(Q) 
		\]
		is a sum of independent mean-zero random variables, all uniformly bounded by $\La$. Furthermore, since $\mathcal{C}_0\subset \mathcal{V}_{j_2}^G$, we have the simple lower bound (see~\cite[Lemma~3.15]{chatterjee})
		\[
		\Var\big[\mu_n\big(Q\cap \bD_n(R)\big) \mid \mathcal{F}_{j_2} \, \big] \ge \sigma(\beta) >0,
		\]
		where $\sigma(\beta)$ depends only on $\beta>0$. Bringing everything together, we apply Proposition~\ref{prop:moderate_deviations_lower_bound_bounded_random_variables} with $N = |\mathcal{C}_0|$ and $\gamma= \alpha+2\eps$ for $\eps>0$ small, and obtain that for $R$ large enough
		\[
		\bP\Big[ B_{j_2}^{\prime} \ge  |\mathcal{C}_0|^{\alpha+2\eps}  \mid \mathcal{F}_{j_2} \, \Big] \ge \exp\Big(-|\mathcal{C}_0|^{2\alpha - 1 + 5\eps}\Big) \, .
		\]
		As $|\mathcal{C}_0| \le |\mathcal{V}_{j_2}| \lesssim R$, plugging the above into~\eqref{eq:moderate_deviations_lower_bound_lower_bound_on_good_boundary_cubes_contribution_with_random_size} gives
		\[
		\mathbf{1}_{\{|\mathcal{C}_0| \ge \delta R \}} \, \bP\Big[ B_{j_2}^{\prime} \ge  R^{\alpha+\eps}  \mid \mathcal{F}_{j_2} \, \Big] \ge \mathbf{1}_{\{|\mathcal{C}_0| \ge \delta R \}} \, \exp\Big(-R^{2\alpha-1+6\eps}\Big)
		\]
		for all $R$ large, as desired.
	\end{proof}
	\subsection{Proof of Theorem~\ref{thm:Micro_JLM}: the lower bound for \texorpdfstring{$\frac{1}{2}<\alpha<1$}{1/2 < alpha < 1}} We denote by
	\begin{equation*}
		\mathcal{E}_1 = \big\{ |M_{j_2}| < \sqrt{R} \, \log^2(R)\big\} \, , \quad  \mathcal{E}_2 = \big\{ |B_{j_2}^{\prime\prime}| < R^{\frac{1}{2} + 2\eps} \big\}\, , \quad  \mathcal{E}_3 = \big\{B_{j_2}^{\prime} \ge R^{\alpha+\eps} \big\} \, , 
	\end{equation*}
	and note that $\mathcal{E}_1 \in \mathcal{F}_{j_2}$, while conditional on $\mathcal{F}_{j_2}$, $\mathcal{E}_2$ and $\mathcal{E}_3$ are independent events.
	By the decomposition
	\[
	\Delta_{n}\big(\bD_n(R)\big) = M_{j_2} + B_{j_2}  = M_{j_2} + B_{j_2}^\prime + B_{j_2}^{\prime\prime} \, ,
	\] 
	we have, for $\alpha>\frac{1}{2}$, the inclusion
	\begin{equation}
		\label{eq:moderate_deviations_lower_bound_inclusion_of_events}
		\Big\{\Delta_{n}\big(\bD_n(R)\big) \ge R^{\alpha} \Big\} \supseteq \mathcal{E}_1\cap \mathcal{E}_2 \cap \mathcal{E}_3 
	\end{equation}
	so it remains to give a lower bound on the probability of the event on the right-hand side. Indeed, the law of total expectation gives
	\begin{align}
		\nonumber \label{eq:moderate_deviations_lower_bound_after_total_expectation}
		\bP\Big[\mathcal{E}_1\cap \mathcal{E}_2 \cap \mathcal{E}_3 \Big] &= \bE\Big[ \mathbf{1}_{\mathcal{E}_1} \bP\big[\mathcal{E}_2 \mid \mathcal{F}_{j_2} \, \big] \, \bP\big[\mathcal{E}_3 \mid \mathcal{F}_{j_2} \, \big]\Big] \\ & \ge \bE\Big[ \mathbf{1}_{\mathcal{E}_1} \, \mathbf{1}_{\mathcal{G}_2} \, \bP\big[\mathcal{E}_2 \mid \mathcal{F}_{j_2} \, \big] \, \mathbf{1}_{\{|\mathcal{C}_0| \ge \delta R\}} \, \bP\big[\mathcal{E}_3 \mid \mathcal{F}_{j_2} \, \big]\Big]\, .
	\end{align}
	By Lemma~\ref{lemma:moderate_deviations_lower_bound_bad_boundary_does_not_contribute} we know that $$\mathbf{1}_{\mathcal{G}_2} \, \bP\big[\mathcal{E}_2 \mid \mathcal{F}_{j_2} \, \big] \ge \frac{1}{2} \, \mathbf{1}_{\mathcal{G}_2}\, ,$$ and Lemma~\ref{lemma:moderate_deviations_lower_bound_good_boundary_correct_probabaility} yields
	\begin{equation*}
		\mathbf{1}_{\{|\mathcal{C}_0| \ge \delta R\}} \, \bP\big[\mathcal{E}_3 \mid \mathcal{F}_{j_2} \, \big] \ge \mathbf{1}_{\{|\mathcal{C}_0| \ge \delta R \}} \, \exp\Big(-R^{2\alpha-1+6\eps}\Big)\, .
	\end{equation*}
	Plugging both bounds into~\eqref{eq:moderate_deviations_lower_bound_after_total_expectation}, we arrive at the inequality
	\[
	\bP\Big[\mathcal{E}_1\cap \mathcal{E}_2 \cap \mathcal{E}_3 \Big] \ge \frac{1}{2}  \, \exp\Big(-R^{2\alpha-1+6\eps}\Big) \, \bP\big[\mathcal{E}_1\cap \mathcal{G}_2 \cap \big\{|\mathcal{C}_0| \ge \delta R\big\}\big] \, .
	\]
	Combining Lemma~\ref{lemma:moderate_deviations_lower_bound_push_disc_to_boundary} with Claim~\ref{claim:moderate_deviations_lower_bound_boundary_cubes_are_typical_with_high_probability}, we see that
	\[
	\lim_{R\to \infty} \lim_{n\to \infty} \bP\big[\mathcal{E}_1\cap \mathcal{G}_2\big] = 1\, ,
	\]
	and together with Lemma~\ref{lemma:moderate_deviations_lower_bound_lemma_from_chatterjee}, we get that for all $R$ large enough
	\[
	\bP\big[\mathcal{E}_1\cap \mathcal{G}_2 \cap \big\{|\mathcal{C}_0| \ge \delta R\big\}\big]  \ge \frac{1}{5} \, ,
	\] 
	uniformly as $n\to\infty$. We found that
	\[
	\bP\Big[\mathcal{E}_1\cap \mathcal{E}_2 \cap \mathcal{E}_3 \Big] \ge \frac{1}{10}  \, \exp\Big(-R^{2\alpha-1+6\eps}\Big) 
	\]
	which, in view of~\eqref{eq:moderate_deviations_lower_bound_inclusion_of_events}, concludes the proof of the lower bound in this range.
	\qed
	\section{Overcrowding probabilities}
	\label{sec:overcrowding}
	In this section, we conclude the proof of Theorem~\ref{thm:Micro_JLM}. It remains to deal with the extreme overcrowding probabilities, that is, the regime $\alpha>2$. 
	\subsection{Proof of Theorem~\ref{thm:Micro_JLM}: the upper bound for \texorpdfstring{$\alpha>2$}{alpha > 2}}
	Since
	$$
	\bE\big[\mu_n\big(\bD_n(R)\big)\big] = \pi R^2 \, ,
	$$
	for all $R$ large enough we have the inclusion
	\begin{equation*}
		\Big\{ |\Delta_n\big(\bD_n(R)\big)| \ge R^\alpha \Big\}\subset \Big\{ \mu_n\big(\bD_n(R)\big) \ge \tfrac{1}{2} R^{\alpha} \Big\}\, .
	\end{equation*}
	Recalling that $\ell> 1$ is an integer such that
	\[
	\frac{R}{\sqrt{n}} \le 2^{-\ell} < 2\frac{R}{\sqrt{n}} \, ,
	\]
	and that $|\mathcal{V}_\ell |\le 4$, the union bound gives
	\begin{align*}	
		\bP\Big[|\Delta_n\big(\bD_n(R)\big)| \ge R^\alpha\Big] & \le \bP\Big[\mu_n\big(\bD_n(R)\big) \ge \frac{1}{2} R^{\alpha}\Big] \le \sum_{Q\in \mathcal{V}_\ell } \bP\Big[ \mu_n\big(Q\cap \bD_n(R)\big) \ge \frac{1}{8} R^\alpha \Big] \\
		& \le 4 \times \bP\Big[ \mu_n\big(Q\big) \ge \tfrac{1}{8} R^\alpha \Big]
	\end{align*}
	where $Q\in \mathcal{D}_\ell$ is some dyadic square. In view of the above, the desired upper bound would follow once we show that
	\begin{equation}
		\label{eq:overcrowding_upper_bound_desired_bound}
		\bP\Big[ \mu_n\big(Q\big) \ge \tfrac{1}{8} R^\alpha \Big] \le \exp\Big(-R^{2\alpha-3\eps}\Big)
	\end{equation}
	for $Q\in \mathcal{D}_\ell$ and for all $\eps>0$. Indeed, let $1\le j_4<\ell$ be an integer such that
	\begin{equation}
		\label{eq:def_of_j_4}
		\frac{R^{\alpha/2}}{\sqrt{n}} \le 2^{-j_4} < 2\, \frac{R^{\alpha/2}}{\sqrt{n}}
	\end{equation}
	and denote by $Q^\prime\in \mathcal{D}_{j_4}$ the ancestor of $Q\in \mathcal{D}_\ell$. We note that 
	\begin{equation}
		\label{eq:overcrowding_upper_bound_expectation_of_top_cube}
		\bE\big[\mu_n(Q^\prime)\big] \in \big[R^\alpha,2R^\alpha\big) \, .
	\end{equation}
	Furthermore, Lemma~\ref{lemma:concentration_on_dyadic_cubes} implies that 
	\begin{align}\label{eq:overcrowding_upper_bound_split_of_probabilities}
		\bP\big[ \mu_n\big(Q\big) \ge \tfrac{1}{8} R^\alpha \big] & \le \bP\Big[ \mu_n\big(Q\big) \ge \tfrac{1}{8} R^\alpha \, , \ |\Delta_{n}(Q^\prime)| \le R^{\alpha-\eps} \Big] + \bP\Big[|\Delta_n(Q^\prime)| \ge R^{\alpha-\eps} \Big]\nonumber\\
		& \le \bP\Big[ \mu_n\big(Q\big) \ge \tfrac{1}{8} R^\alpha \, , \ |\Delta_{n}(Q^\prime)| \le R^{\alpha-\eps} \Big] + \exp\Big(-c \, R^{2\alpha-2\eps}\, \Big) \, .
	\end{align}
	To deal with the remaining probability on the right-hand side of~\eqref{eq:overcrowding_upper_bound_split_of_probabilities}, we note that~\eqref{eq:overcrowding_upper_bound_expectation_of_top_cube} implies that
	\[
	\Big\{\mu_n\big(Q\big) \ge \tfrac{1}{8} R^\alpha \, , \ |\Delta_{n}(Q^\prime)| \le R^{\alpha-\eps}\Big\} \subset \Big\{ \mu_n(Q) \ge \tfrac{1}{20}\, \mu_n(Q^\prime) \, , \  |\Delta_n(Q^\prime)| \le R^{\alpha-\eps} \Big\}
	\]
	for all $R$ large enough. Noting that $\ell - j_4 \ge c_\alpha \log R$, Proposition~\ref{prop:self_similarity_and_conditional_independence} combined with Lemma~\ref{lemma:overcrowding_probability_in_dyadic_cube} (applied with $\delta=1/20$) implies that
	\[
	\bP\Big[\mu(Q) \ge \frac{1}{20} \mu_n(Q^\prime) \mid \mathcal{F}_{j_4} \, \Big] \le \exp\Big(-c \, (\log R) \, \mu_n(Q^\prime)^2 \Big)\, .
	\]
	Altogether, we get that
	\begin{align*}
		\bP\Big[\mu_n\big(Q\big) \ge \tfrac{1}{8} R^\alpha \, , \ |\Delta_{n}(Q^\prime)| \le R^{\alpha-\eps}\Big]  &\le \bE\Big[\mathbf{1}_{\{|\Delta_{n}(Q^\prime)| \le R^{\alpha-\eps} \}} \bP\big[\mu(Q) \ge \tfrac{1}{20} \mu_n(Q^\prime) \mid \mathcal{F}_{j_4} \, \big]\Big] \\ &\le \bE\Big[\mathbf{1}_{\{|\Delta_{n}(Q^\prime)| \le R^{\alpha-\eps} \}} \exp\Big(-c(\log R) \, \mu_n(Q^\prime)^2 \Big)\Big]  \\ &\le \exp\Big(-c \, (\log R)\, R^{2\alpha}\Big) 
	\end{align*}
	which, together with~\eqref{eq:overcrowding_upper_bound_split_of_probabilities}, proves the bound~\eqref{eq:overcrowding_upper_bound_desired_bound} and we are done.
	\qed
	\subsection{Proof of Theorem~\ref{thm:Micro_JLM}: the lower bound for \texorpdfstring{$\alpha>2$}{alpha > 2}}
	Let $j_5\ge \ell$ be an integer such that
	\[
	\frac{1}{8}\frac{R}{\sqrt{n}} \le 2^{-j_5} < \frac{1}{4}\frac{R}{\sqrt{n}}\, .
	\]
	As $\bD_n(R)$ is a disk of radius $R/\sqrt{n}$, it necessarily contains a dyadic square $Q\in \mathcal{D}_{j_5}$, and we have the inclusion
	\begin{equation}
		\label{eq:overcrowding_lower_bound_inclusion_of_events}
		\Big\{ \mu_n(Q) \ge 2R^\alpha\Big\} \subset \Big\{ \Delta_n\big(\bD_n(R)\big)\ge R^\alpha \Big\} \, .
	\end{equation} 
	For $\alpha>2$, let $j_6<j_5$ be an integer such that
	\[
	2 \, \frac{R^{\alpha/2}}{\sqrt{n}} \le 2^{-j_6} < 4 \, \frac{R^{\alpha/2}}{\sqrt{n}}
	\]
	and denote by $Q^\prime\in \mathcal{D}_{j_6}$ the ancestor of $Q$. Since $\bE\big[\mu_n(Q^\prime)\big] \in (4R^\alpha,16 R^\alpha)$, Lemma~\ref{lemma:concentration_on_dyadic_cubes} implies in particular that
	\[
	\bP\Big[\mu_n(Q^\prime) \in \big(2R^\alpha,20 R^\alpha\big) \Big] \ge \frac{1}{2} \, ,
	\]
	for all $R$ large enough, uniformly as $n\to \infty$. Furthermore, by Lemma~\ref{lemma:overcrowding_probability_in_dyadic_cube} we have
	\begin{equation*}
		\bP\Big[\mu_n(Q) = \mu_n(Q^\prime) \mid \mathcal{F}_{j_6} \, \Big] \ge \exp\Big(-c (j_5- j_6) \mu_n(Q^\prime)^2\Big) \ge \exp\Big(-c \, (\log R )\, \mu_n(Q^\prime)^2\Big) 
	\end{equation*}
	and altogether we get
	\begin{align*}
		\bP\Big[\mu_n(Q) \ge 2R^\alpha \Big] &\ge \bE\Big[\mathbf{1}_{\{\mu_n(Q^\prime) \in (2R^\alpha,20 R^\alpha)\}} \bP\big[\mu_n(Q) = \mu_n(Q^\prime)  \mid \mathcal{F}_{j_6} \, \big]\Big]  \\ &\ge \exp\Big(-c \, (\log R) \, R^{2\alpha} \Big) \, \bP\Big[\mu_n(Q^\prime) \in \big(2R^\alpha,20 R^\alpha\big)\Big]  \ge \frac{1}{2}\exp\Big(-c \, (\log R) \, R^{2\alpha} \Big) \, .
	\end{align*}
	The inequality above, together with~\eqref{eq:overcrowding_lower_bound_inclusion_of_events}, concludes the lower bound and we are done.
	\qed 
	\appendix
	\section{Moderate deviations lower bound : Proof of Proposition~\ref{prop:moderate_deviations_lower_bound_bounded_random_variables}}
	\label{sec:moderate_deviations_lower_bound}
	The proof of Proposition~\ref{prop:moderate_deviations_lower_bound_bounded_random_variables} follows the standard scheme of the change of measure technique, see for instance the book by Dembo and Zeitouni~\cite{DemboZeitouniLDP}. Denote by
	\[
	\varphi_j (t) = \bE \big[e^{tX_j}\big] \qquad t\in \bR 
	\]
	the moment generating function of $X_j$. As $X_j$ have zero mean and $|X_j| \le \La$ almost surely, we have the asymptotic expansion as $t\to 0$
	\begin{equation}
		\label{eq:expansion_of_generating_function}
		\big|\varphi_j(t) - 1 - \frac{t^2}{2} \bE\big[X_j^2\big]\big| \lesssim  t^3
	\end{equation}
	where the error term depends only on $\La$. Denote by $\mu_j$ the law of $X_j$, and consider a new probability measure $\widetilde{\mu}_j$ given by
	\begin{equation}
		\label{eq:def_of_tilted_measure}
		\frac{{\rm d} \widetilde{\mu}_j(x)}{{\rm d}\mu_j (x)} = \frac{e^{\xi x}}{\varphi_j(\xi)} \qquad x\in \bR 
	\end{equation}
	where $\xi>0$ is a small parameter that we choose later. Denote by $\widetilde{X}_1,\ldots,\widetilde{X}_N$ a realization of independent random variables, where $\widetilde{X}_j$ has law $\widetilde{\mu}_j$. We also denote by $$\widetilde{S}_N = \widetilde{X}_1+\ldots+\widetilde{X}_N \, .$$	
	\begin{proof}[Proof of Proposition~\ref{prop:moderate_deviations_lower_bound_bounded_random_variables}]
		Let $\eta>0$ be a small parameter. Our starting point is the change of measure~\eqref{eq:def_of_tilted_measure}, which gives that
		\begin{align*}
			\bP\Big[S_N \ge N^\gamma\Big] &\ge \bP\Big[S_N\in \big(N^\gamma, N^{\gamma+2\eta}\big)\Big] = \int_{\{S_N\in (N^\gamma, N^{\gamma+2\eta})\}} {\rm d} \mu_1(x_1) \cdots {\rm d}\mu_N(x_N) \\ &= \int_{\{S_N\in (N^\gamma, N^{\gamma+2\eta})\}} e^{-\xi S_N} \prod_{j=1}^{N} \varphi_j(\xi) \ {\rm d} \widetilde\mu_1(x_1) \cdots {\rm d}\widetilde{\mu}_N(x_N) \\ & \ge \exp\bigg(\sum_{j=1}^{N} \log \varphi_j(\xi) - \xi N^{\gamma+2\eta}\bigg) \,  \bP_\xi \Big[\widetilde{S}_N \in \big(N^\gamma, N^{\gamma+2\eta}\big)\Big] \, .
		\end{align*}
		Therefore, to conclude the proof, we take $\xi = N^{\gamma-1+\eta}$ for $\eta<1-\gamma$ and show that both
		\begin{equation}
			\label{eq:moderate_deviation_lower_bound_after_tilte_exponential_term}
			\exp\bigg(\sum_{j=1}^{N} \log \varphi_j(\xi) - \xi N^{\gamma+2\eta}\bigg) \ge \exp\Big(-N^{2\gamma-1 + 3\eta}\Big)
		\end{equation}
		and 
		\begin{equation}
			\label{eq:moderate_deviation_lower_bound_after_tilte_probability_term}
			\lim_{N\to \infty} \bP_\xi \Big[\widetilde{S}_N \in \big(N^\gamma, N^{\gamma+2\eta}\big)\Big] = 1 
		\end{equation}
		holds. Indeed, since $\bE\big[X_j^2\big]\ge \sigma >0$ for all $j$, the inequality~\eqref{eq:expansion_of_generating_function} implies that
		\[
		\varphi_j(\xi) \ge 1 + \frac{\sigma}{4} \xi^2
		\]
		for all $N$ large enough, which in turn shows $\log \varphi_j(\xi) \ge 0$
		for all $1\le j \le N$. We get that
		\begin{align*}
			\sum_{j=1}^{N} \log \varphi_j(\xi) - \xi N^{\gamma+2\eta} &\ge - \xi N^{\gamma+2\eta} = - N^{2\gamma -1 + 3\eta} 
		\end{align*}
		for all $N$ large enough, which proves~\eqref{eq:moderate_deviation_lower_bound_after_tilte_exponential_term}. To deal with the probability term~\eqref{eq:moderate_deviation_lower_bound_after_tilte_probability_term}, we first need to understand the effect of the change of measure on the expectation. For all $j\in \{1,\ldots,N\}$, we have
		\begin{align*}
			\bE_\xi\big[\widetilde{X}_j\big] = \int_{\bR} x \, {\rm d}\widetilde\mu_j(x) = \int_{\bR} \frac{xe^{\xi x}}{\varphi_j(\xi)} \, {\rm d} \mu_j(x) = \frac{\partial}{\partial t} \bigg|_{t=\xi} \log \varphi_j(t) \, , 
		\end{align*}
		and hence, by~\eqref{eq:expansion_of_generating_function}, we see that
		\[
		\bE_\xi\big[\widetilde{X}_j\big] = \xi \, \bE\big[X_j^2\big] + \mathcal{O} (\xi^2)
		\]
		where the error term is uniform in $1\le j\le N$. Plugging $\xi = N^{\gamma-1+\eta}$ and summing over $j$, we arrive at the asymptotic equality
		\begin{equation*}
			\bE_\xi\big[\widetilde{S}_N\big] = N^{\gamma+\eta} \Big(\frac{1}{N} \sum_{j=1}^{N} \bE[X_j^2]\Big) + \mathcal{O}\big(N^{2\gamma-1+2\eta}\big) \, .
		\end{equation*}
		As $\eta<1-\gamma$ and with our assumptions on $\{X_j\}$, we see that for all $N$ large enough
		\begin{equation}
			\label{eq:asymptotic_of_expectation_after_change_of_measure}
			\frac{\sigma}{2} N^{\gamma+\eta} \le \bE_\xi\big[\widetilde{S}_N\big] \le 2\La^2 N^{\gamma+\eta} \, .
		\end{equation}
		Finally, since $\{\widetilde{X}_j\}$ are independent and bounded almost surely, we have
		$ \Var_\xi \big[\widetilde{S}_N\big] \lesssim N .
		$ 
		Together with~\eqref{eq:asymptotic_of_expectation_after_change_of_measure}, we apply Chebyshev's inequality and see that
		\begin{equation*}
			\bP_\xi \Big[\widetilde{S}_N \not\in \big(N^\gamma, N^{\gamma+2\eta}\big)\Big] \le \bP_\xi \Big[ \big|\widetilde{S}_N - \bE[\widetilde{S}_N]\big| \ge N^{\gamma + \eta/2}\Big] \le \frac{\Var_\xi \big[\widetilde{S}_N\big]}{N^{2\gamma +\eta}} \lesssim N^{1-2\gamma -\eta} \xrightarrow{N\to \infty} 0
		\end{equation*}
		which proves~\eqref{eq:moderate_deviation_lower_bound_after_tilte_probability_term} and hence the proposition.
	\end{proof}
	
	\printbibliography[heading=bibliography]

	\hspace{10mm}
\end{document}